\documentclass{article}

\usepackage{graphicx} 
\usepackage{mathtools}
\mathtoolsset{showonlyrefs=true}
\usepackage{bm}
\usepackage{amssymb}
\usepackage{amsthm}
\usepackage{amsmath}
\usepackage{todonotes}
\usepackage[left=30mm,right=30mm,top=30mm,bottom=30mm]{geometry}
\usepackage{cases}
\usepackage{stmaryrd}

\newcommand{\relmiddle}[1]{\mathrel{}\middle#1\mathrel{}}
\newcommand{\Div}{\operatorname{div}}

\newcommand{\dist}{\operatorname{dist}}

\theoremstyle{plain}
\newtheorem{theorem}{Theorem}[section]
\newtheorem{lemma}[theorem]{Lemma}
\newtheorem{proposition}[theorem]{Proposition}
\theoremstyle{definition}

\theoremstyle{remark}
\newtheorem{remark}[theorem]{Remark}

\numberwithin{equation}{section}

\allowdisplaybreaks

\title{On the existence of a singular limit equation for a model of a self-propelled object motion}
\makeatletter
\renewcommand\@date{{%
  \vspace{-\baselineskip}%
  \large\centering
  \begin{tabular}{@{}c@{}}
    Masaharu Nagayama\textsuperscript{1} \\
    \normalsize nagayama@es.hokudai.ac.jp
  \end{tabular}%
  \quad\quad
  \begin{tabular}{@{}c@{}}
    Koya Sakakibara\textsuperscript{2,3} \\
    \normalsize ksakaki@se.kanazawa-u.ac.jp
  \end{tabular}%
  \quad\quad
  \begin{tabular}{@{}c@{}}
    Keisuke Takasao\textsuperscript{4} \\
    \normalsize k.takasao@math.kyoto-u.ac.jp
  \end{tabular}

  \bigskip

  \textsuperscript{1}Research Center of Mathematics for Social Creativity, Research Institute for Electronic Science, Hokkaido University\par
  \textsuperscript{2}Faculty of Mathematics and Physics, Institute of Science and Engineering, Kanazawa University\par
  \textsuperscript{3}RIKEN iTHEMS\par
  \textsuperscript{4}Department of Mathematics, Graduate School of Science, Kyoto University
}}

\begin{document}

\maketitle

\begin{abstract}
    In this paper, a phase‑field model is introduced to describe the evolution of a deformable, self‑propelled object driven by surface‑tension effects. 
    The model couples an Allen--Cahn‑type equation, which distinguishes the body from the surrounding fluid, with a reaction-diffusion equation for the surfactant concentration. 
    As the interface‑thickness parameter $\varepsilon$ tends to zero, it is shown that the phase‑field model converges to a sharp-interface limit coupled with a reaction-diffusion equation.
    In particular, the normal velocity is given by the mean curvature, surface tension, and volume-preserving effect.
\end{abstract}

\section{Introduction}
A self-propelled system is a system in motion by the action of an object deforming or an object changing its field of motion. 
The motion of bacteria and cells in living systems and the non-living systems, such as camphor and pentanol droplets, are considered to belong to self-propelled systems because they have internal mechanisms to generate motion.
In recent years, mathematical models have been used to analyze keratocyte motion~\cite{SRL2010}, cell collective motion~\cite{GG1992, HTN2004, N2012} and cell division~\cite{AN2019}, while non-living systems such as camphor disks motion~\cite{MNDH2004,PhysRevE.92.022910}, the motion of pentanol droplet~\cite{NSKY2005}, the regular motion of oil droplet~\cite{SMHY2005}, and various other motions have been analyzed through mathematical models.  
In this study, we focus on self-propelled systems, where the motion mechanism is primarily governed by surface tension, which is the motion of non-living systems such as camphor disks and pentanol droplets.

Since the formulation of the motion model without shape deformation has already been done and some comparisons between experimental and mathematical model results and mathematical analyses have been performed \cite{IKN2014, KIN2013}, the first author and second author have therefore developed the following mathematical model that can handle self-propelled object motion with shape change, which consists of the Allen-Cahn equation~\cite{AC1979} and the parabolic equation for surfactant molecular concentrations (see \cite{nagayama2023reaction} for more details):
\begin{equation}
    \begin{dcases*}
        \varepsilon^2\tau\frac{\partial\varphi^\varepsilon}{\partial t}=\varepsilon^2\sigma^2\triangle\varphi^\varepsilon+\varphi^\varepsilon(1-\varphi^\varepsilon)\left(\varphi^\varepsilon-a(S[\varphi](t),u^\varepsilon,\varepsilon)\right)&in $\Omega\times(0,T)$,\\
        \frac{\partial u^\varepsilon}{\partial t}=\triangle u^\varepsilon-ku^\varepsilon+\varphi^\varepsilon&in $\Omega\times(0,T)$,\\
        \varphi^\varepsilon|_{t=0}=\varphi_0^\varepsilon&in $\Omega$,\\
        u^\varepsilon|_{t=0}=u_0^\varepsilon&in $\Omega$,
    \end{dcases*}
    \label{eq:old_problem}
\end{equation}
where $\Omega=(\mathbb{R}/\mathbb{Z})^n$ with $n=2,3$, $\varphi_0^\varepsilon$ and $u_0^\varepsilon$ are initial conditions such that $0\le\varphi_0^\varepsilon\le1$ and $u_0^\varepsilon\ge0$,
\begin{align}
    a(S[\varphi^\varepsilon](t),u^\varepsilon,\varepsilon)
    &=\frac{1}{2}+\varepsilon\left(-\gamma(u)+S[\varphi^\varepsilon](t)\right),\\
    S[\varphi^\varepsilon](t)&=\alpha\left(\int_\Omega\varphi^\varepsilon\,\mathrm{d}x-\int_\Omega\varphi_0^\varepsilon\,\mathrm{d}x\right),
\end{align}
and $\gamma\in C^2((0,\infty))$ is a smooth monotonically decreasing function, for instance,
\begin{equation}
    \gamma(u)=\frac{1}{1+(u/u_1)^m}+\gamma_0,\quad
    u>0,\quad m \in \mathbb{N}.
\end{equation}
Here $\varphi^\varepsilon(x, t)$ denotes $0 \le \varphi^\varepsilon(x, t) < 1/2$ as the water surface and $1/2 \le \varphi^\varepsilon(x, t) \le 1$ as a self-propelled object.
The unknown function $u(x, t)$ represents the concentration of the surfactant molecules on the water surface.
The function $\gamma(u)$ is the surface tension, where $\gamma_0$ is the surface tension at the critical micelle concentration of surfactant, $\gamma_0 + \gamma_1$ is the surface tension of the water surface and $s$ is a positive constant.\\
The formulation of the mathematical model above is a modeling technique called the phase field method, which has been used in the crystal growth model in undercooled liquids~\cite{K1993} and the crystal interface kinetic model~\cite{OKS2001}.
In recent years, the phase field method has been used in the formulation of cellular motion living systems using the Allen-Cahn equations~\cite{AN2019,N2012} and the Cahn-Hilliard equation\cite{SRL2010}, which often appears as a representation model in the field of materials science\cite{AHS2021}.\\
Assuming that $\gamma$ depends only on the space variable $x$ in model \eqref{eq:old_problem}, we obtain the following quasi-volume conserving Allen-Cahn equation:
\begin{equation} \label{eq01-03}
\varepsilon^{2} \tau \frac{\partial \varphi^\varepsilon}{\partial t} = \varepsilon^2 \sigma^2 \Delta \varphi^\varepsilon + \varphi^\varepsilon(1 - \varphi^\varepsilon)(\varphi^\varepsilon - 1/2 + \varepsilon ( \gamma(x) + S[\varphi^\varepsilon](t)) ).
\end{equation}
Unfortunately, \eqref{eq01-03} can not be derived as the $L^2$ gradient flow under the following condition:
\begin{equation}
   S[\varphi^\varepsilon](t)=\alpha\left(\int_\Omega\varphi^\varepsilon\,\mathrm{d}x-\int_\Omega\varphi_0^\varepsilon\,\mathrm{d}x\right).
\end{equation}
In this paper, we change the term $S[\varphi^{\varepsilon}](t)$ as follows:
\begin{equation}
    \tilde{S}[\varphi^{\varepsilon}](t)
    \coloneqq\alpha\left(
        \int_\Omega G(\varphi^{\varepsilon})\,\mathrm{d}x-\int_\Omega G(\varphi^{\varepsilon}_0)\,\mathrm{d}x
    \right),\qquad
    G(\varphi)
    \coloneqq\frac{1}{2}\varphi^2-\frac{1}{3}\varphi^3.
\end{equation}
Then equation\eqref{eq01-03} with $S$ replaced by $\tilde{S}$ can be derived as the $L^2$ gradient flow with the following energy $E(t)$:
\begin{equation}\label{eq01-5}
\begin{aligned}
E(t) = & \int_{\Omega}\left( \frac{\varepsilon \sigma^2 |\nabla \varphi^{\varepsilon} |^2}{2} + \frac{W(\varphi^{\varepsilon})}{2 \varepsilon} \right) \mathrm{d}x \\
& + \frac{\alpha}{2}\left( \int_{\Omega} G(\varphi^{\varepsilon}(x, t)) \mathrm{d}x - \int_{\Omega} G(\varphi^{\varepsilon}(x, 0))  \mathrm{d}x \right)^2 + \int_{\Omega} \gamma(x)(1 - G(\varphi^{\varepsilon}(x, t)) ) \mathrm{d}x, 
\end{aligned} 
\end{equation}
where $W$ is potential 
\begin{equation}
    W(\varphi)
    \coloneqq\frac{\varphi^2(1-\varphi)^2}{2}.
\end{equation}
We see that
\begin{equation}
    \varphi^\varepsilon(1-\varphi^\varepsilon)(\varphi^\varepsilon-a(\tilde{S}[\varphi^\varepsilon](t),u^\varepsilon,\varepsilon))
    =-\frac{W'(\varphi^\varepsilon)}{2}-\varepsilon G'(\varphi^\varepsilon)(-\gamma(u^\varepsilon)+\tilde{S}[\varphi^\varepsilon](t)).
\end{equation}
Namely, our mathematical model in this paper can be given by
\begin{equation}
    \begin{dcases*}
        \varepsilon^2\tau\frac{\partial\varphi^\varepsilon}{\partial t}=\varepsilon^2\sigma^2\triangle\varphi^\varepsilon-\frac{W'(\varphi^\varepsilon)}{2}-\varepsilon G'(\varphi^\varepsilon)\left(-\gamma(u^\varepsilon)+\tilde{S}[\varphi^\varepsilon](t)\right)&in $\Omega\times(0,T)$,\\
        \frac{\partial u^\varepsilon}{\partial t}=\triangle u^\varepsilon-ku^\varepsilon+\varphi^\varepsilon&in $\Omega\times(0,T)$,\\
        \varphi^\varepsilon|_{t=0}=\varphi_0^\varepsilon&in $\Omega$,\\
        u^\varepsilon|_{t=0}=u_0^\varepsilon&in $\Omega$.
    \end{dcases*}
    \label{eq:target_problem}
\end{equation}
The dynamics of the above model \eqref{eq:target_problem} is qualitatively the same as the old model \eqref{eq:old_problem}, which will be described in a forthcoming paper.
Therefore, we re-propose the mathematical model \eqref{eq:target_problem} as a mathematical model for the self-propelled object motion involving deformation, and we treat the model equation \eqref{eq:target_problem} as an analytical object in this paper.
It is well known that the singular limit of the solution to the standard Allen-Cahn equation corresponds to the mean curvature flow. Similarly, we can obtain some geometric evolution equations for our model (see \cite{nagayama2023reaction} and Theorem~\ref{thm:main} below).
From another perspective, it can be said that the phase field methods can be used as an approximation of the solution of geometric equations. 
The goal of this paper is not only to characterize the singular limits of equations \eqref{eq:target_problem}, but also to prove the existence of weak solutions to the system, including the geometric equations.

Note that $\varphi^\varepsilon$ converges to some characteristic function $\varphi$ as $\varepsilon\searrow0$ under suitable assumptions (see \cite[Proposition 8.3]{takasao2016existence}).
Then, we formally obtain that
\begin{equation}
    \int_\Omega G(\varphi^\varepsilon)\,\mathrm{d}x
    \approx
    \int_{\Omega\cap(\{\varphi=1\})}G(\varphi)\,\mathrm{d}x
    +\int_{\Omega\cap\{\varphi=0\}}G(\varphi)\,\mathrm{d}x
    =\frac{1}{6}|\Omega\cap\{x\in\Omega\mid\varphi(x)=1\}|.
\end{equation}
For the sake of simplicity in the following discussion, we set $\tau=\sigma=1$.
Let $(\varphi^\varepsilon,u^\varepsilon)$ be a classical solution to \eqref{eq:target_problem}.
Throughout the following discussion, we suppose that
\begin{equation}
    \sup_{\varepsilon>0}\|u_0^\varepsilon\|_{H^1(\Omega)}<\infty,\quad
    \sup_{\varepsilon>0}\left[
        \int_\Omega\left(
            \frac{\varepsilon|\nabla\varphi_0^\varepsilon|^2}{2}+\frac{W(\varphi_0^\varepsilon)}{2\varepsilon}
        \right)\,\mathrm{d}x
    \right]
    <\infty,\quad
    0<\varphi_0^\varepsilon(x)<1\quad(x\in\Omega)
    \label{eq:u0-phi0-bound}
\end{equation}
holds.
The purpose of this research is to show the existence of singular limit equations for \eqref{eq:target_problem}, and we have been able to show the existence of singular limit equations in a weak sense as follows:
\begin{theorem}
    \label{thm:main}
    Let $\mu_t ^\varepsilon$ be a Radon measure defined in Theorem \ref{thm:GM} below.
    Then there exists a positive sequence $\{\varepsilon_i\}_{i=1}^\infty$ 
    with $\varepsilon _i \to 0$ such that the limit $\varphi\coloneqq\lim_{i\to\infty}\varphi^{\varepsilon_i}$ exists for a.e. $(x,t)\in\Omega\times(0,\infty)$ and the limit measure $\mu_t =\lim_{i\to \infty} \mu_t ^{\varepsilon _i}$ exists for any $t\geq 0$, where $\mu_t$ satisfies Theorem \ref{thm:GM} (II), and the generalized velocity vector $\vec{v}$ with respect to $\mu_t$ is given by
    \begin{equation}
        \vec{v}
    =\vec{h}-\frac{1}{6}\tilde{S}[\varphi]\left(\frac{\mathrm{d}\mathcal{H}^{n-1}|_{\partial^\ast\Omega_{\mathrm{in}}(\varphi,t)}}{\mathrm{d}\mu_t}\right)\vec{\nu}+\frac{1}{6}\gamma(u)\left(
        \frac{\mathrm{d}\mathcal{H}^{n-1}|_{\partial^\ast\Omega_{\mathrm{in}}(\varphi,t)}}{\mathrm{d}\mu_t}
    \right)\vec{\nu}.
    \end{equation}
    Namely, 
\begin{multline}
    \int_0^T\int_\Omega\vec{v}\cdot\vec{\eta}\,\mathrm{d}\mu_t\,\mathrm{d}t
    =\int_0^T\int_\Omega\vec{h}\cdot\vec{\eta}\,\mathrm{d}\mu_t\,\mathrm{d}t\\
    +\int_0^T\int_\Omega\left[-\frac{1}{6}\tilde{S}[\varphi]\left(\frac{\mathrm{d}\mathcal{H}^{n-1}|_{\partial^\ast\Omega_{\mathrm{in}}(\varphi,t)}}{\mathrm{d}\mu_t}\right)\vec{\nu}+\frac{1}{6}\gamma(u)\left(
        \frac{\mathrm{d}\mathcal{H}^{n-1}|_{\partial^\ast\Omega_{\mathrm{in}}(\varphi,t)}}{\mathrm{d}\mu_t}
    \right)\vec{\nu}\right]\cdot\vec{\eta}\,\mathrm{d}\mu_t\,\mathrm{d}t
\end{multline}
holds for any $\vec{\eta}\in(C_0^1(\Omega\times(0,T)))^n$,
where $\vec{h}$ and $\vec{\nu}$ are the generalized mean curvature vector and the outer unit normal vector, respectively.
\end{theorem}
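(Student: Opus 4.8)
The plan is to follow the by-now-standard Brakke/Ilmanen-type strategy for passing to the limit in the Allen--Cahn equation, adapted to our coupled system. First I would record the basic a priori estimates that follow from the energy identity attached to \eqref{eq01-5}: testing the first equation of \eqref{eq:target_problem} with $\partial_t\varphi^\varepsilon$ gives, after integration, a uniform-in-$\varepsilon$ bound on the Allen--Cahn energy $\int_\Omega\big(\tfrac{\varepsilon|\nabla\varphi^\varepsilon|^2}{2}+\tfrac{W(\varphi^\varepsilon)}{2\varepsilon}\big)\,\mathrm{d}x$ on $[0,T]$ and on the dissipation $\int_0^T\int_\Omega\varepsilon|\partial_t\varphi^\varepsilon|^2\,\mathrm{d}x\,\mathrm{d}t$; here one uses that $G'$ is bounded on the relevant range, that $0<\varphi^\varepsilon<1$ is preserved (maximum principle), that $\gamma$ is bounded, and that $\tilde S[\varphi^\varepsilon](t)$ is controlled by the mass. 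Testing the second equation with $u^\varepsilon$ and with $\partial_t u^\varepsilon$ yields uniform bounds on $u^\varepsilon$ in $L^\infty(0,T;H^1(\Omega))\cap L^2(0,T;H^2(\Omega))\cap H^1(0,T;L^2(\Omega))$, hence strong $L^2$ compactness of $u^\varepsilon$ by Aubin--Lions, so along a subsequence $u^{\varepsilon_i}\to u$ strongly in $L^2$ and a.e., and consequently $\gamma(u^{\varepsilon_i})\to\gamma(u)$ a.e. and boundedly. The energy bound also yields, via the measures $\mu_t^\varepsilon$ of Theorem~\ref{thm:GM}, the existence of the limit Radon measures $\mu_t$ and the rectifiability/integrality statement quoted there, together with a generalized mean curvature vector $\vec h$ and first variation formula for $\mu_t$.

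Next I would pass to the limit in the weak (distributional) form of the first equation tested against $\varepsilon\nabla\varphi^\varepsilon\cdot\vec\eta$ in the usual way: the term $\varepsilon\sigma^2\Delta\varphi^\varepsilon$ against $\nabla\varphi^\varepsilon$ produces, after integration by parts and using the equipartition of energy (Theorem~\ref{thm:GM} should give $\varepsilon|\nabla\varphi^\varepsilon|^2\,\mathrm{d}x$ and $\tfrac{W(\varphi^\varepsilon)}{\varepsilon}\,\mathrm{d}x$ converging to the same measure $\mu_t$, up to the multiplicity), the first-variation term $\int\int(\operatorname{div}\vec\eta-\vec\nu\cdot\nabla\vec\eta\,\vec\nu)\,\mathrm{d}\mu_t$, i.e. $-\int\int\vec h\cdot\vec\eta\,\mathrm{d}\mu_t$; the time-derivative term $\varepsilon\tau\,\partial_t\varphi^\varepsilon$ against $\nabla\varphi^\varepsilon\cdot\vec\eta$ produces $\int\int\vec v\cdot\vec\eta\,\mathrm{d}\mu_t$, where $\vec v$ is the generalized velocity associated with $\mu_t$ (this is exactly the content of identifying the distributional limit of $\varepsilon\,\partial_t\varphi^\varepsilon\,\nabla\varphi^\varepsilon$). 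The new feature is the lower-order forcing $-\varepsilon G'(\varphi^\varepsilon)\big(-\gamma(u^\varepsilon)+\tilde S[\varphi^\varepsilon]\big)$; tested against $\nabla\varphi^\varepsilon\cdot\vec\eta$ this is $\varepsilon$ times a bounded factor times $\nabla\varphi^\varepsilon\cdot\vec\eta$, but one cannot simply pass $\varepsilon\nabla\varphi^\varepsilon$ to $\mu_t$ here because it is multiplied by $G'(\varphi^\varepsilon)$, which is not constant across the transition layer. Instead I would write $\varepsilon G'(\varphi^\varepsilon)\nabla\varphi^\varepsilon=\varepsilon\nabla\big(\widetilde G(\varphi^\varepsilon)\big)$ with $\widetilde G'=G'$ (so $\widetilde G=G$), integrate by parts, and use that $\varphi^{\varepsilon_i}\to\varphi=\mathbf 1_{\Omega_{\mathrm{in}}(\varphi,t)}$ in $L^1$ together with $G(0)=0$, $G(1)=\tfrac16$, so that $G(\varphi^{\varepsilon_i})\to\tfrac16\mathbf 1_{\Omega_{\mathrm{in}}(\varphi,t)}$ strongly in $L^1$; the boundary integral then becomes $\tfrac16\int\int_{\partial^\ast\Omega_{\mathrm{in}}(\varphi,t)}(\text{div}\,\vec\eta)\,\mathrm{d}\mathcal H^{n-1}$, which by the Gauss--Green theorem for sets of finite perimeter equals $-\tfrac16\int\int\vec\nu\cdot\vec\eta\,\mathrm{d}\mathcal H^{n-1}|_{\partial^\ast\Omega_{\mathrm{in}}(\varphi,t)}$; rewriting $\mathcal H^{n-1}|_{\partial^\ast\Omega_{\mathrm{in}}(\varphi,t)}=\big(\tfrac{\mathrm{d}\mathcal H^{n-1}|_{\partial^\ast\Omega_{\mathrm{in}}(\varphi,t)}}{\mathrm{d}\mu_t}\big)\mu_t$ (the Radon--Nikodym derivative exists because the perimeter measure is absolutely continuous with respect to $\mu_t$, which follows from $\mu_t\geq\mathcal H^{n-1}|_{\partial^\ast\Omega_{\mathrm{in}}(\varphi,t)}$ in Theorem~\ref{thm:GM}) gives exactly the two extra terms in the claimed formula, with the scalar factor $-\gamma(u)+\tilde S[\varphi]$ pulled out after checking it converges boundedly a.e.

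I expect the main obstacle to be the rigorous identification of the limit of $\varepsilon\tau\,\partial_t\varphi^\varepsilon\,\nabla\varphi^\varepsilon$ with $\vec v\,\mu_t$ in the sense that $\vec v$ is genuinely the generalized velocity of the family $\{\mu_t\}$ (rather than merely some $L^2(\mu_t\,\mathrm{d}t)$ vector field), i.e. that it satisfies the transport/continuity identity $\partial_t\mu_t+\operatorname{div}(\vec v\,\mu_t)=(\ldots)$ characterizing Brakke-type velocities; this requires the Cauchy--Schwarz argument bounding $\big|\int\int\varepsilon\,\partial_t\varphi^\varepsilon\,\nabla\varphi^\varepsilon\cdot\vec\eta\big|$ by $\big(\int\int\varepsilon|\partial_t\varphi^\varepsilon|^2\big)^{1/2}\big(\int\int\varepsilon|\nabla\varphi^\varepsilon|^2|\vec\eta|^2\big)^{1/2}$ and then a lower-semicontinuity/weak-limit identification along the lines of Ilmanen and Mugnai--Röger. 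The coupling to $u^\varepsilon$ is comparatively benign once strong $L^2$ convergence of $u^{\varepsilon_i}$ is in hand, and the nonlocal term $\tilde S[\varphi^\varepsilon]$ is harmless because $t\mapsto\int_\Omega G(\varphi^{\varepsilon_i})\,\mathrm{d}x$ converges uniformly on $[0,T]$ to $\tfrac16|\Omega_{\mathrm{in}}(\varphi,t)|$ by the energy bound and dominated convergence. A secondary technical point is justifying that $\mu_t\,\mathrm{d}t$-a.e. the Radon--Nikodym density $\tfrac{\mathrm{d}\mathcal H^{n-1}|_{\partial^\ast\Omega_{\mathrm{in}}(\varphi,t)}}{\mathrm{d}\mu_t}$ is well-defined and $\mu_t$-measurable jointly in $(x,t)$, which follows from the disintegration of the space-time measure and the fact, recorded in Theorem~\ref{thm:GM}, that the perimeter of $\Omega_{\mathrm{in}}(\varphi,t)$ is dominated by $\mu_t$.
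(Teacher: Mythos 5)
Your high-level strategy matches the paper's: test the ($\varepsilon$-rescaled) Allen--Cahn equation against $\nabla\varphi^\varepsilon\cdot\vec\eta$, pass the left side and the first-variation term to $\int\vec v\cdot\vec\eta\,\mathrm d\mu_t$ and $\int\vec h\cdot\vec\eta\,\mathrm d\mu_t$ by the Mugnai--R\"oger/Hutchinson--Tonegawa machinery (Theorem~\ref{thm:GM}), and handle the forcing by writing $G'(\varphi^\varepsilon)\nabla\varphi^\varepsilon=\nabla G(\varphi^\varepsilon)$, integrating by parts, sending $G(\varphi^{\varepsilon_i})\to\tfrac16\chi_{\Omega_{\mathrm{in}}}$ and invoking Gauss--Green plus a Radon--Nikod\'ym rewrite. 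That is indeed what the paper does. However, there are two concrete gaps in your sketch that the paper has to, and does, resolve.

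First, you cannot ``pull out the scalar factor $-\gamma(u)+\tilde S[\varphi]$'' as if it were a constant: $\gamma(u^\varepsilon)$ depends on $x$, so when you integrate $\gamma(u^{\varepsilon_i})\nabla G(\varphi^{\varepsilon_i})\cdot\vec\eta$ by parts the derivative falls on $\gamma(u^{\varepsilon_i})\vec\eta$, producing the term $G(\varphi^{\varepsilon_i})\gamma'(u^{\varepsilon_i})\nabla u^{\varepsilon_i}\cdot\vec\eta$ in addition to $G(\varphi^{\varepsilon_i})\gamma(u^{\varepsilon_i})\operatorname{div}\vec\eta$. Passing to the limit in the first requires controlling three difference terms (as the paper's $L_1,L_2,L_3$), using the uniform $H^1$ bound on $u^\varepsilon$ and a.e.\ convergence of $\varphi^{\varepsilon_i}$ with dominated convergence; only after this, and then reassembling the divergence, does one obtain $\tfrac16\int\int_{\Omega_{\mathrm{in}}(\varphi,t)}\operatorname{div}(\gamma(u)\vec\eta)\,\mathrm dx\,\mathrm dt$. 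Second, the Gauss--Green formula you then invoke on the set of finite perimeter $\Omega_{\mathrm{in}}(\varphi,t)$ with the vector field $\gamma(u)\vec\eta$ is not automatic, because $u(\cdot,t)$ is only $H^1(\Omega)$: one must show that the trace of $\gamma(u)$ on the reduced boundary $\partial^\ast\Omega_{\mathrm{in}}(\varphi,t)$ is well defined and that the boundary integral is the limit of the smooth-approximant ones. The paper establishes this via a trace estimate $\|g\|_{L^1(\partial^\ast\Omega_{\mathrm{in}})}\le C D(t)\|g\|_{W^{1,1}(\Omega)}$, where $D(t)=\sup_{x,r}r^{1-n}\mathcal H^{n-1}(\partial^\ast\Omega_{\mathrm{in}}(\varphi,t)\cap B_r(x))$, and then proves $D(t)<\infty$ for a.e.\ $t$ by a Fatou argument on the $L^2$ bound of the chemical potential combined with the monotonicity/density estimates of R\"oger--Sch\"atzle. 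Without this step your boundary integral is formal; the density bound is a genuinely necessary ingredient that your proposal omits.
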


\begin{remark}
    Formally, $\mu_t$ corresponds to the Hausdorff measure with respect to the boundary of $\{x\in\Omega\mid\lim_{\varepsilon\to0}\varphi^{\varepsilon_i}(x,t)=1\}$.
    In addition, $\vec{v}$ corresponds to the normal velocity vector of the boundary.
\end{remark}

Throughout this paper, we often denote $\varepsilon_i$
as $\varepsilon$ for simplicity.
This paper is organized as follows.
In Chapter 2, we give the basic evaluations necessary for the proof, and in Chapter 3, we prove the main theorem. Finally, we give a summary.

\section{Preliminaries}

We prepare several notions used in this paper.
Set
\begin{equation}
    M_\gamma\coloneqq\sup_{u\ge0}\gamma(u).
\end{equation}
Define the region $\Omega_{\mathrm{in}}(\varphi,t)$ as
\begin{equation}
    \Omega_{\mathrm{in}}(\varphi,t)\coloneqq\left\{x\in\Omega\relmiddle|\varphi(x,t)>\frac{1}{2}\right\}.
\end{equation}
We often denote the above set by $\Omega_{\mathrm{in}}(t)$ unless there is a confusion.
Define the energy $E$ as
\begin{equation}
    E(t)\coloneqq E_s(t)+E_p(t),\label{eq:energy}
\end{equation}
where
\begin{equation}
    E_s(t)\coloneqq\int_\Omega\left(\frac{\varepsilon|\nabla\varphi^\varepsilon|^2}{2}+\frac{W(\varphi^\varepsilon)}{2\varepsilon}\right)\,\mathrm{d}x,\qquad
    E_p(t)\coloneqq\frac{\alpha}{2}\left(\int_\Omega G(\varphi^\varepsilon)\,\mathrm{d}x-\int_\Omega G(\varphi_0^\varepsilon)\,\mathrm{d}x\right)^2,
\end{equation}

\begin{theorem}
\label{thm:MP}
Assume that $0< \varphi_0 ^\varepsilon (x)<1$ and $u_0^\varepsilon(x)>0$ hold for any $x \in \Omega$. 
Then, we have $0< \varphi ^\varepsilon (x,t)<1$ and $u^\varepsilon(x,t)>0$ for any $x \in \Omega$ and $t \in [0,\infty)$.
\end{theorem}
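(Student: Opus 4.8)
The plan is to establish the two-sided bound on $\varphi^\varepsilon$ and the positivity of $u^\varepsilon$ by a barrier/comparison argument applied to the system \eqref{eq:target_problem}, treating the two equations in a coupled but essentially decoupled-in-time fashion since each is a scalar semilinear parabolic equation once the other unknown is regarded as a given coefficient. First I would rewrite the $\varphi^\varepsilon$-equation as
\begin{equation}
    \varepsilon^2\tau\,\partial_t\varphi^\varepsilon
    =\varepsilon^2\sigma^2\triangle\varphi^\varepsilon
    +\varphi^\varepsilon(1-\varphi^\varepsilon)\bigl(\varphi^\varepsilon-a(\tilde S[\varphi^\varepsilon](t),u^\varepsilon,\varepsilon)\bigr),
\end{equation}
using the algebraic identity already recorded in the excerpt, and observe that the reaction term vanishes identically at $\varphi^\varepsilon=0$ and at $\varphi^\varepsilon=1$. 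Hence the constant functions $\underline\varphi\equiv 0$ and $\overline\varphi\equiv 1$ are, respectively, a subsolution and a supersolution of that equation for \emph{any} bounded continuous choice of the coefficient functions $u^\varepsilon$ and of the (spatially constant, time-dependent) quantity $\tilde S[\varphi^\varepsilon](t)$. Since by hypothesis $0<\varphi_0^\varepsilon<1$ on $\Omega$, the standard parabolic comparison principle on the torus $\Omega=(\mathbb R/\mathbb Z)^n$ (no boundary conditions needed) gives $0\le\varphi^\varepsilon(x,t)\le 1$; the strong maximum principle then upgrades this to the strict inequality $0<\varphi^\varepsilon(x,t)<1$ for $t>0$, because $\varphi^\varepsilon$ cannot attain the value $0$ or $1$ at an interior space-time point without being constant.

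Next, with $\varphi^\varepsilon\ge 0$ in hand, I would turn to the $u^\varepsilon$-equation
\begin{equation}
    \partial_t u^\varepsilon=\triangle u^\varepsilon-k u^\varepsilon+\varphi^\varepsilon,
\end{equation}
which is now a linear parabolic equation with a nonnegative source term and initial datum $u_0^\varepsilon>0$. Writing $w\coloneqq e^{kt}u^\varepsilon$ removes the zeroth-order term, giving $\partial_t w=\triangle w+e^{kt}\varphi^\varepsilon\ge 0$ with $w|_{t=0}=u_0^\varepsilon>0$; the minimum principle for the heat operator then yields $w>0$, hence $u^\varepsilon>0$, for all $t\ge 0$. (Equivalently one applies the comparison principle directly with subsolution $\underline u\equiv 0$.) The strict positivity again follows from the strong maximum principle, or simply from the fact that $u^\varepsilon\ge e^{-kt}\min_\Omega u_0^\varepsilon>0$.

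The only genuine subtlety — and the step I would flag as the main obstacle — is the coupling through the nonlocal term $\tilde S[\varphi^\varepsilon](t)=\alpha\bigl(\int_\Omega G(\varphi^\varepsilon)\,\mathrm{d}x-\int_\Omega G(\varphi_0^\varepsilon)\,\mathrm{d}x\bigr)$ and through $\gamma(u^\varepsilon)$ inside the coefficient $a$: the comparison principle cannot be invoked for the two equations independently unless one first knows that $a(\tilde S[\varphi^\varepsilon](t),u^\varepsilon,\varepsilon)$ is a well-defined bounded coefficient, which a priori requires $u^\varepsilon>0$ (so that $\gamma(u^\varepsilon)$ is defined, $\gamma\in C^2((0,\infty))$) and requires $\varphi^\varepsilon$ to be bounded (so that $\tilde S[\varphi^\varepsilon](t)$ is finite). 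I would resolve this by a bootstrap on the (assumed) classical solution: since $(\varphi^\varepsilon,u^\varepsilon)$ is classical on $\Omega\times(0,T)$, it is continuous up to $t=0$, so by continuity there is a maximal time interval $[0,T^\ast)$ on which $0<\varphi^\varepsilon<1$ and $u^\varepsilon>0$; on that interval $a(\cdot)$ is a bona fide bounded smooth coefficient, the comparison arguments above apply and show that the bounds are in fact \emph{strict} and stable, which forces $T^\ast=T$. Closing this open–closed continuation argument is the heart of the proof; the comparison-principle inputs themselves are standard.
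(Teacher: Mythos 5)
Your proposal is correct and reaches the same conclusion by a comparison-principle argument, but your choice of barriers differs genuinely from the paper's. You take the constant sub- and supersolutions $\underline\varphi\equiv 0$ and $\overline\varphi\equiv 1$ (valid because the reaction $\varphi(1-\varphi)(\varphi-a)$ vanishes identically at $\varphi=0$ and $\varphi=1$, whatever the bounded coefficient $a$), obtain the weak bounds $0\le\varphi^\varepsilon\le 1$, and then invoke the strong maximum principle to upgrade to strict inequalities for $t>0$. The paper instead constructs explicit exponentially decaying barriers $\overline\varphi(t)=1-D_1\mathrm{e}^{-D_2 t}$ and $\underline\varphi(t)=D_3\mathrm{e}^{-D_2 t}$ with $D_1=1-\max\{1/2,\max_\Omega\varphi_0^\varepsilon\}>0$, $D_3=\min_\Omega\varphi_0^\varepsilon>0$ and $D_2=\varepsilon^{-2}\bigl[\tfrac12+\varepsilon\bigl(M_\gamma+\tfrac13|\Omega|\alpha\bigr)\bigr]$, verifying the super/sub-solution inequalities directly from $|\gamma(u^\varepsilon)|\le M_\gamma$ and $|\tilde S[\varphi^\varepsilon]|\le\tfrac13|\Omega|\alpha$. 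This delivers \emph{quantitative} strict bounds in one step and avoids any appeal to the Nirenberg-type strong maximum principle; those explicit bounds also make the continuation step immediate, since at any putative escape time the solution is uniformly bounded away from $0$ and $1$. Your route is also valid (the reaction term is polynomial, hence $C^1$ near $\varphi=0,1$, so the strong maximum principle does apply — you should state this hypothesis explicitly), and you are in fact more careful than the paper about spelling out the open--closed bootstrap, which the paper compresses to ``one can easily check.'' Two further points of agreement worth noting: both you and the paper neutralize the nonlocality of $\tilde S[\varphi^\varepsilon]$ by freezing that coefficient at the actual solution and comparing only the resulting \emph{local} scalar equation; and your bound $u^\varepsilon\ge\mathrm{e}^{-kt}\min_\Omega u_0^\varepsilon$ is precisely the paper's subsolution $\underline u(t)=D_4\mathrm{e}^{-kt}$ with $D_4=\min_\Omega u_0^\varepsilon$, so the $u$-part is essentially identical.
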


\begin{proof}
Let $\overline{\varphi} (t) = 1- D_1 e^{-D_2 t}$,
where 
\begin{equation}
    D_1= 1 - \max \left\{ \frac12 , \max_{x \in \Omega} \varphi_0 ^\varepsilon(x) \right\},\qquad
    D_2= \varepsilon^{-2} \left[\frac12 + \varepsilon \left(M_\gamma + \frac{1}{3} |\Omega| \alpha\right)\right].
\end{equation}
We may assume that there exists $T>0$ such that
$0< \varphi ^\varepsilon (x,t)<1$ for any $x \in \Omega$ and $t \in [0,T)$.
Note that $1-D_1\ge\max_{x\in\Omega}\varphi_0^\varepsilon(x)$ holds; that is, $\varphi_0^\varepsilon\le\overline{\varphi}(0)$.
Since the function $[0,1]\ni\varphi\mapsto G(\varphi)$ is monotonically increasing and $0\le G(\varphi)\le1/6$ holds for any $\varphi\in[0,1]$, we obtain
\begin{equation}
    |\tilde{S}[\varphi^\varepsilon](t)|
    \le\alpha\left(\int_\Omega G(\varphi^\varepsilon)\,\mathrm{d}x+\int_\Omega G(\varphi_0^\varepsilon)\,\mathrm{d}x\right)
    \le\frac1{3}|\Omega|\alpha,
    \qquad t \in [0,T).
    \label{eq:St_estimate}
\end{equation}
Therefore, for any $t \in(0,T)$, we have
\begin{align*}
&\varepsilon^2 \triangle\overline{\varphi}-\frac{W' (\overline{\varphi})}{2}-\varepsilon G'(\overline{\varphi})\left(-\gamma(u^\varepsilon)+\tilde{S}[\varphi^\varepsilon](t)\right) \\
&\leq
\overline{\varphi} (1-\overline{\varphi}) \left(\overline{\varphi} -\frac12\right)
+ \varepsilon (1- \overline{\varphi}) \overline{\varphi} 
\left(M_\gamma + \frac1{3} |\Omega| \alpha\right) \\
&\leq
\frac12 (1-\overline{\varphi}) 
+ \varepsilon (1- \overline{\varphi})  
\left(M_\gamma + \frac1{3} |\Omega| \alpha\right)\\
&= D_1 e^{-D_2 t} \left[\frac12 +\varepsilon \left(M_\gamma + \frac1{3} |\Omega| \alpha\right) \right]\\
&=\varepsilon^2\overline{\varphi}_t.
\end{align*}
Therefore, $\overline{\varphi}$ is a super solution to 
\begin{equation}\label{equationforMP}
\varepsilon^2\frac{\partial\phi}{\partial t}=\varepsilon^2 \triangle\phi-\frac{W'(\phi)}{2}-\varepsilon G'(\phi)\left(-\gamma(u^\varepsilon)+\tilde{S}[\varphi^\varepsilon](t)\right).
\end{equation}
Note that at the point $(x,t)$ where $\varphi ^\varepsilon =\phi$ holds, the second and third terms on the right-hand side of \eqref{equationforMP} coincide with those of \eqref{eq:target_problem}, and thus the maximum principle can be used.
Similarly, we find that $\underline{\varphi}(t)=D_3\mathrm{e}^{-D_2t}$ with $D_3=\min_{x\in\Omega}\varphi_0^\varepsilon(x)$ is a sub solution to the first equation in \eqref{equationforMP}.
Hence, the maximum principle implies 
\begin{equation}
0 < \underline{\varphi}(t) \leq \varphi^\varepsilon (x,t)
\leq \overline{\varphi} (t) <1,\qquad x\in\Omega,\quad t \in [0,T).    
\label{maximumprinciple}
\end{equation}

Concerning $u^\varepsilon$, let us consider a function $\underline{u}(t)=D_4\mathrm{e}^{-kt}$ with $D_4=\min_{x\in\Omega}u_0^\varepsilon(x)$.
Then, we have
\begin{equation}
    \triangle\underline{u}-k\underline{u}+\varphi^\varepsilon
    \ge-k\underline{u}
    =\underline{u}_t.
\end{equation}
Therefore, $\underline{u}$ is a sub solution to the second equation in \eqref{eq:target_problem}, and the maximum principle implies
\begin{equation}
    0<\underline{u}(t)\le u^\varepsilon(x,t),\qquad x\in\Omega,\quad t\in[0,T).
    \label{eq:u_maximum-principle}
\end{equation}
One can easily check that \eqref{maximumprinciple} and \eqref{eq:u_maximum-principle} hold for any $T>0$.
\end{proof}

\begin{lemma}
    \label{lem:u_converge}
    There exists a subsequence of $\{u^\varepsilon\}_{\varepsilon>0}$, denoted by the same symbol, and $u\in H^1(\Omega\times(0,T))$ such that $\{u^\varepsilon\}_{\varepsilon>0}$ weakly converges to $u$ in $H^1(\Omega\times(0,T))$ and strongly in $L^2(\Omega\times(0,T))$.
\end{lemma}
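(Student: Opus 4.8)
The plan is to derive uniform-in-$\varepsilon$ bounds for $u^\varepsilon$ in $H^1(\Omega\times(0,T))$ by standard parabolic energy estimates, and then to conclude by weak compactness together with the Rellich--Kondrachov theorem. Since $(\varphi^\varepsilon,u^\varepsilon)$ is a classical solution, all the integrations by parts below are legitimate, and by Theorem~\ref{thm:MP} we have $0<\varphi^\varepsilon<1$, so $\|\varphi^\varepsilon(\cdot,t)\|_{L^2(\Omega)}^2\le|\Omega|$ for every $t\in[0,T)$. Note also that $\Omega=(\mathbb{R}/\mathbb{Z})^n$ is compact without boundary, so no boundary terms appear.

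First I would test the second equation in \eqref{eq:target_problem} with $u^\varepsilon$ and integrate over $\Omega$. Using $-\int_\Omega\nabla u^\varepsilon\cdot\nabla u^\varepsilon\,\mathrm{d}x=-\|\nabla u^\varepsilon\|_{L^2(\Omega)}^2$ and Young's inequality on $\int_\Omega\varphi^\varepsilon u^\varepsilon\,\mathrm{d}x$, one gets
\[
\frac{1}{2}\frac{\mathrm{d}}{\mathrm{d}t}\|u^\varepsilon\|_{L^2(\Omega)}^2+\|\nabla u^\varepsilon\|_{L^2(\Omega)}^2+k\|u^\varepsilon\|_{L^2(\Omega)}^2\le C|\Omega|+C\|u^\varepsilon\|_{L^2(\Omega)}^2 .
\]
Applying Gronwall's inequality together with $\sup_\varepsilon\|u_0^\varepsilon\|_{H^1(\Omega)}<\infty$ from \eqref{eq:u0-phi0-bound} yields $\sup_\varepsilon\bigl(\|u^\varepsilon\|_{L^\infty(0,T;L^2(\Omega))}+\|\nabla u^\varepsilon\|_{L^2(\Omega\times(0,T))}\bigr)<\infty$.

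Next I would test with $\partial_t u^\varepsilon$, which gives
\[
\|\partial_t u^\varepsilon\|_{L^2(\Omega)}^2+\frac{1}{2}\frac{\mathrm{d}}{\mathrm{d}t}\Bigl(\|\nabla u^\varepsilon\|_{L^2(\Omega)}^2+k\|u^\varepsilon\|_{L^2(\Omega)}^2\Bigr)=\int_\Omega\varphi^\varepsilon\,\partial_t u^\varepsilon\,\mathrm{d}x\le\frac{1}{2}\|\partial_t u^\varepsilon\|_{L^2(\Omega)}^2+\frac{1}{2}|\Omega| .
\]
Integrating in time over $(0,T)$ and using once more the uniform bound on $\|\nabla u_0^\varepsilon\|_{L^2(\Omega)}$, one obtains $\sup_\varepsilon\bigl(\|\partial_t u^\varepsilon\|_{L^2(\Omega\times(0,T))}+\|\nabla u^\varepsilon\|_{L^\infty(0,T;L^2(\Omega))}\bigr)<\infty$. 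Combining the two steps, $\{u^\varepsilon\}_{\varepsilon>0}$ is bounded in $H^1(\Omega\times(0,T))$. Since this space is a reflexive Hilbert space, a subsequence converges weakly to some $u\in H^1(\Omega\times(0,T))$; and since $\Omega\times(0,T)$ is a bounded Lipschitz domain, the embedding $H^1(\Omega\times(0,T))\hookrightarrow L^2(\Omega\times(0,T))$ is compact, so the same subsequence converges strongly in $L^2(\Omega\times(0,T))$.

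As for the main difficulty: there is no deep obstacle here, the argument being essentially the classical energy method for the heat equation with a bounded source term. The only point requiring a little care is the use of $\partial_t u^\varepsilon$ as a test function; this is justified here because $u^\varepsilon$ is assumed to be a classical solution, but in a weak-solution setting one would first mollify in time or argue through a Galerkin approximation. One should also record explicitly that the constants in the estimates above are independent of $\varepsilon$, which is the case precisely because of the $\varepsilon$-uniform bounds in \eqref{eq:u0-phi0-bound} and the $\varepsilon$-uniform bound $0<\varphi^\varepsilon<1$.
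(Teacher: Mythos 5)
Your proposal is correct and follows essentially the same route as the paper: a first energy estimate testing the $u^\varepsilon$-equation with $u^\varepsilon$, then a second estimate that is algebraically equivalent to testing with $\partial_t u^\varepsilon$ (the paper writes it as computing $\frac{\mathrm{d}}{\mathrm{d}t}\int_\Omega|\nabla u^\varepsilon|^2$ and substituting $\triangle u^\varepsilon=u_t^\varepsilon+ku^\varepsilon-\varphi^\varepsilon$), both closed via Young's inequality, $0<\varphi^\varepsilon<1$, and the uniform bound \eqref{eq:u0-phi0-bound} on $u_0^\varepsilon$, and finally weak compactness in $H^1(\Omega\times(0,T))$ plus Rellich--Kondrachov. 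The only cosmetic difference is that you keep $-k\int u^\varepsilon u_t^\varepsilon\,\mathrm{d}x$ as an exact time derivative, which makes the second step self-contained, whereas the paper applies Young's inequality to it and reuses the $L^2$-in-time bound on $u^\varepsilon$ from the first step.
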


\begin{proof}
    We first show that $\{u^\varepsilon\}_{\varepsilon>0}$ is uniformly bounded in $H^1(\Omega\times(0,T))$; that is,
    \begin{equation}
        \sup_{\varepsilon>0}\|u^\varepsilon\|_{H^1(\Omega\times(0,T))}<\infty.\label{eq:estimate_u-H1}
    \end{equation}
    By using \eqref{eq:target_problem} and performing integration by parts, we have
    \begin{align*}
        \frac{1}{2}\frac{\mathrm{d}}{\mathrm{d}t}\int_\Omega(u^\varepsilon)^2\,\mathrm{d}x
        &=\int_\Omega u^\varepsilon u_t^\varepsilon\,\mathrm{d}x
        =\int_\Omega u^\varepsilon\left(\triangle u^\varepsilon-ku^\varepsilon+\varphi^\varepsilon\right)\,\mathrm{d}x
        =\int_\Omega\left(-|\nabla u^\varepsilon|^2-k(u^\varepsilon)^2+u^\varepsilon\varphi^\varepsilon\right)\,\mathrm{d}x.
    \end{align*}
    Then, applying the Young's inequality and using Theorem~\ref{thm:MP}, we obtain
    \begin{align*}
        \int_\Omega\left(-|\nabla u^\varepsilon|^2-k(u^\varepsilon)^2+u^\varepsilon\varphi^\varepsilon\right)\,\mathrm{d}x
        &\le\int_\Omega\left(-|\nabla u^\varepsilon|^2-k(u^\varepsilon)^2+\frac{k}{2}(u^\varepsilon)^2+\frac{1}{2k}(\varphi^\varepsilon)^2\right)\,\mathrm{d}x\\
        &\le-\int_\Omega\left(|\nabla u^\varepsilon|^2+\frac{k}{2}(u^\varepsilon)^2\right)\,\mathrm{d}x+\frac{1}{2k}|\Omega|;
    \end{align*}
    that is,
    \begin{equation}
        \frac{1}{2}\frac{\mathrm{d}}{\mathrm{d}t}\int_\Omega(u^\varepsilon)^2\,\mathrm{d}x
        \le-\int_\Omega\left(|\nabla u^\varepsilon|^2+\frac{k}{2}(u^\varepsilon)^2\right)\,\mathrm{d}x+\frac{1}{2k}|\Omega|.
    \end{equation}
    Therefore, integrating the above inequality with respect to $t$ from $t=0$ to $t=T$ implies that
    \begin{equation}
        \left.\frac{1}{2}\int_\Omega(u^\varepsilon)^2\,\mathrm{d}x\right|_{t=T}+\int_0^T\int_\Omega\left(|\nabla u^\varepsilon|^2+\frac{k}{2}(u^\varepsilon)^2\right)\,\mathrm{d}x\,\mathrm{d}t
        \le\left.\frac{1}{2}\int_\Omega(u^\varepsilon)^2\,\mathrm{d}x\right|_{t=0}+\frac{T}{2k}|\Omega|.
    \end{equation}
    In particular, the above inequality yields that
    \begin{equation}
        \int_0^T\int_\Omega\frac{k}{2}(u^\varepsilon)^2\,\mathrm{d}x,\ \int_0^T\int_\Omega|\nabla u^\varepsilon|^2\,\mathrm{d}x\,\mathrm{d}t\le\left.\frac{1}{2}\int_\Omega(u^\varepsilon)^2\,\mathrm{d}x\right|_{t=0}+\frac{T}{2k}|\Omega|.
    \end{equation}
    Since $\{u_0^\varepsilon\}_{\varepsilon>0}$ is uniformly bounded in $H^1(\Omega)$ by the assumption~\eqref{eq:u0-phi0-bound}, the above estimate yields that
    \begin{equation}
        \sup_{\varepsilon>0}\|u^\varepsilon\|_{L^2(\Omega\times(0,T))}<\infty,\qquad
        \sup_{\varepsilon>0}\|\nabla u^\varepsilon\|_{L^2(\Omega\times(0,T))}<\infty.
        \label{eq:u_nabla-u_bound}
    \end{equation}
    Similarly, using \eqref{eq:target_problem} and performing integration by parts, we have
    \begin{align*}
        \frac{1}{2}\frac{\mathrm{d}}{\mathrm{d}t}\int_\Omega|\nabla u^\varepsilon|^2\,\mathrm{d}x
        &=\int_\Omega\nabla u^\varepsilon\cdot\nabla u_t^\varepsilon\,\mathrm{d}x
        =-\int_\Omega(\triangle u^\varepsilon) u_t^\varepsilon\,\mathrm{d}x
        =-\int_\Omega(u_t^\varepsilon+ku^\varepsilon-\varphi^\varepsilon)u_t^\varepsilon\,\mathrm{d}x\\
        &=-\int_\Omega(u_t^\varepsilon)^2\,\mathrm{d}x-\int_\Omega ku^\varepsilon u_t^\varepsilon\,\mathrm{d}x+\int_\Omega\varphi^\varepsilon u_t^\varepsilon\,\mathrm{d}x.
    \end{align*}
    Using the Young's inequality and Theorem~\ref{thm:MP}, we obtain
    \begin{align}
        &-\int_\Omega(u_t^\varepsilon)^2\,\mathrm{d}x-\int_\Omega ku^\varepsilon u_t^\varepsilon\,\mathrm{d}x+\int_\Omega\varphi^\varepsilon u_t^\varepsilon\,\mathrm{d}x\\
        &\le-\int_\Omega(u_t^\varepsilon)^2\,\mathrm{d}x+\int_\Omega\left(\frac{1}{4}(u_t^\varepsilon)^2+k^2(u^\varepsilon)^2\right)\,\mathrm{d}x+\int_\Omega\left(\frac{1}{4}(u_t^\varepsilon)^2+(\varphi^\varepsilon)^2\right)\,\mathrm{d}x\\
        &\le-\frac{1}{2}\int_\Omega(u_t^\varepsilon)^2\,\mathrm{d}x+k^2\int_\Omega(u^\varepsilon)^2\,\mathrm{d}x+|\Omega|;
    \end{align}
    that is,
    \begin{equation}
        \frac{1}{2}\frac{\mathrm{d}}{\mathrm{d}t}\int_\Omega|\nabla u^\varepsilon|^2\,\mathrm{d}x
        \le-\frac{1}{2}\int_\Omega(u_t^\varepsilon)^2\,\mathrm{d}x+k^2\int_\Omega(u^\varepsilon)^2\,\mathrm{d}x+|\Omega|.
    \end{equation}
    Therefore, integrating the above inequality with respect to $t$ from $t=0$ to $t=T$ yields that
    \begin{equation}
        \left.\frac{1}{2}\int_\Omega|\nabla u^\varepsilon|^2\,\mathrm{d}x\right|_{t=T}+\frac{1}{2}\int_0^T\int_\Omega(u_t^\varepsilon)^2\,\mathrm{d}x\,\mathrm{d}t
        \le\left.\frac{1}{2}\int_\Omega|\nabla u^\varepsilon|^2\,\mathrm{d}x\right|_{t=0}+k^2\int_0^T\int_\Omega(u^\varepsilon)^2\,\mathrm{d}x+T|\Omega|.
    \end{equation}
    In particular, the above inequality leads to
    \begin{equation}
        \frac{1}{2}\int_0^T\int_\Omega(u_t^\varepsilon)^2\,\mathrm{d}x\,\mathrm{d}t
        \le\left.\frac{1}{2}\int_\Omega|\nabla u^\varepsilon|^2\,\mathrm{d}x\right|_{t=0}+k^2\int_0^T\int_\Omega(u^\varepsilon)^2\,\mathrm{d}x+T|\Omega|.
    \end{equation}
    Note that $\{u_0^\varepsilon\}_{\varepsilon>0}$ is uniformly bounded in $H^1(\Omega)$ by the assumption~\eqref{eq:u0-phi0-bound} and so is $\{u^\varepsilon\}_{\varepsilon>0}$ by \eqref{eq:u_nabla-u_bound}.
    Since $\{u^\varepsilon\}_{\varepsilon>0}$ is uniformly bounded in $L^2(\Omega\times(0,T))$ in $H^1(\Omega)$ by \eqref{eq:u_nabla-u_bound}, the above inequality implies that
    \begin{equation}
        \sup_{\varepsilon>0}\|u_t^\varepsilon\|_{L^2(\Omega\times(0,T))}<\infty.
    \end{equation}
    Thus, we have shown the desired estimate~\eqref{eq:estimate_u-H1}.

    Since $H^1(\Omega\times(0,T))$ is weakly compact, the above estimate shows that there exists a subsequence of $\{u^\varepsilon\}_{\varepsilon>0}$, denoted by the same symbol, and a function $u\in H^1(\Omega\times(0,T))$ such that $\{u^\varepsilon\}_{\varepsilon>0}$ weakly converges to $u$ in $H^1(\Omega\times(0,T))$.
    The Rellich--Kondrachov theorem implies that $H^1(\Omega\times(0,T))$ is compactly embedded into $L^2(\Omega\times(0,T))$.
    Therefore, taking a subsequence of $\{u^\varepsilon\}_{\varepsilon>0}$ if necessary, denoted by the same symbol, $\{u^\varepsilon\}_{\varepsilon>0}$ strongly converges to $u$ in $L^2(\Omega\times(0,T))$.

\end{proof}

\begin{lemma}
    \label{lem:estimate}
    \begin{enumerate}
        \item There is a constant $C(T)$ such that the estimate
        \begin{equation}
            E(T)+\int_0^T\int_\Omega\varepsilon\left(\varphi_t^\varepsilon\right)^2\,\mathrm{d}x\,\mathrm{d}t\le C(T)
        \end{equation}
        holds for any $\varepsilon\in(0,1)$.
        More precisely, $C(T)$ is explicitly given by
        \begin{equation}
            C(T)=2\mathrm{e}^{2M_\gamma^2T}E(0).
            \label{eq:CT}
        \end{equation}
        \item It holds that
        \begin{equation}
            \lim_{\alpha\to\infty}\int_\Omega G(\varphi^\varepsilon)\,\mathrm{d}x
            =\int_\Omega G(\varphi_0^\varepsilon)\,\mathrm{d}x
        \end{equation}
        for each $t\in(0,\infty)$, and
        \begin{align}
             &\sup_{\substack{t\in(0,T),\\\varepsilon>0}}E_s(t)<\infty,\\
            &\sup_{\varepsilon>0}\int_0^T\int_\Omega\frac{1}{\varepsilon}\left[
                G'(\varphi^\varepsilon)\left(
                    \gamma(u^\varepsilon)-\tilde{S}[\varphi^\varepsilon](t)
                \right)
            \right]^2\,\mathrm{d}x\,\mathrm{d}t
            <\infty.
        \end{align}
        In particular, it holds that
        \begin{equation}
            \sup_{\varepsilon\in(0,1)}\int_0^T\int_\Omega\left(\varepsilon\triangle\varphi^\varepsilon-\frac{W'(\varphi^\varepsilon)}{2\varepsilon}\right)^2\,\mathrm{d}x\,\mathrm{d}t<\infty.
        \end{equation}
    \end{enumerate}
\end{lemma}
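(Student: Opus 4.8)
The plan is to exploit the $L^2$-gradient-flow structure of \eqref{eq:target_problem}. For part (i) I would differentiate $E(t)=E_s(t)+E_p(t)$ in time. Since $\Omega$ is a torus, integration by parts has no boundary contribution, so
\[
\frac{\mathrm{d}}{\mathrm{d}t}E_s(t)=\int_\Omega\left(-\varepsilon\triangle\varphi^\varepsilon+\frac{W'(\varphi^\varepsilon)}{2\varepsilon}\right)\varphi_t^\varepsilon\,\mathrm{d}x,\qquad \frac{\mathrm{d}}{\mathrm{d}t}E_p(t)=\tilde{S}[\varphi^\varepsilon](t)\int_\Omega G'(\varphi^\varepsilon)\varphi_t^\varepsilon\,\mathrm{d}x.
\]
Dividing the first equation of \eqref{eq:target_problem} (with $\tau=\sigma=1$) by $\varepsilon$ and substituting the resulting expression for $-\varepsilon\triangle\varphi^\varepsilon+W'(\varphi^\varepsilon)/(2\varepsilon)$ into $\mathrm{d}E_s/\mathrm{d}t$, the two terms carrying $\tilde{S}[\varphi^\varepsilon](t)$ cancel and one is left with the dissipation identity
\[
\frac{\mathrm{d}}{\mathrm{d}t}E(t)+\int_\Omega\varepsilon(\varphi_t^\varepsilon)^2\,\mathrm{d}x=\int_\Omega G'(\varphi^\varepsilon)\gamma(u^\varepsilon)\varphi_t^\varepsilon\,\mathrm{d}x.
\]
The essential algebraic observation is the identity $(G'(\varphi))^2=\varphi^2(1-\varphi)^2=2W(\varphi)$, which is built into the definitions of $G$ and $W$. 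Together with $0<\gamma(u^\varepsilon)\le M_\gamma$ from Theorem~\ref{thm:MP} and Young's inequality it yields
\[
\int_\Omega G'(\varphi^\varepsilon)\gamma(u^\varepsilon)\varphi_t^\varepsilon\,\mathrm{d}x\le\frac12\int_\Omega\varepsilon(\varphi_t^\varepsilon)^2\,\mathrm{d}x+\frac{M_\gamma^2}{\varepsilon}\int_\Omega W(\varphi^\varepsilon)\,\mathrm{d}x\le\frac12\int_\Omega\varepsilon(\varphi_t^\varepsilon)^2\,\mathrm{d}x+2M_\gamma^2E(t),
\]
where I used $\varepsilon^{-1}\int_\Omega W(\varphi^\varepsilon)\,\mathrm{d}x\le 2E_s(t)\le 2E(t)$. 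Hence $\frac{\mathrm{d}}{\mathrm{d}t}E(t)+\frac12\int_\Omega\varepsilon(\varphi_t^\varepsilon)^2\,\mathrm{d}x\le 2M_\gamma^2 E(t)$; Gronwall's inequality gives $E(t)\le E(0)\mathrm{e}^{2M_\gamma^2 t}$, and integrating the differential inequality over $(0,T)$ with this bound produces $E(T)+\frac12\int_0^T\!\int_\Omega\varepsilon(\varphi_t^\varepsilon)^2\,\mathrm{d}x\,\mathrm{d}t\le E(0)\mathrm{e}^{2M_\gamma^2 T}$, whence the claim with $C(T)=2\mathrm{e}^{2M_\gamma^2 T}E(0)$. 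Note $E(0)=E_s(0)$, because $\varphi^\varepsilon|_{t=0}=\varphi_0^\varepsilon$ forces $E_p(0)=0$; in particular $C(T)$ does not depend on $\alpha$ and, by \eqref{eq:u0-phi0-bound}, is bounded uniformly in $\varepsilon$.

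For part (ii) every assertion follows by feeding part (i) into the elementary bounds already in hand. Since $E_p(t)\le E(t)\le C(t)$ with $C(t)$ independent of $\alpha$, one has $\alpha\bigl(\int_\Omega G(\varphi^\varepsilon)\,\mathrm{d}x-\int_\Omega G(\varphi_0^\varepsilon)\,\mathrm{d}x\bigr)^2\le 2C(t)$, and letting $\alpha\to\infty$ gives the first identity. The bound $\sup_{t\in(0,T),\,\varepsilon>0}E_s(t)<\infty$ is immediate from $E_s(t)\le E(t)\le C(T)=2\mathrm{e}^{2M_\gamma^2 T}E_s(0)$ and the uniform bound on $E_s(0)$ in \eqref{eq:u0-phi0-bound}. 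For the third estimate, \eqref{eq:St_estimate} and $0<\gamma(u^\varepsilon)\le M_\gamma$ give $|\gamma(u^\varepsilon)-\tilde{S}[\varphi^\varepsilon](t)|\le M_\gamma+\tfrac{1}{3}|\Omega|\alpha$, so using again $(G'(\varphi^\varepsilon))^2=2W(\varphi^\varepsilon)$ and $W(\varphi^\varepsilon)/\varepsilon\le 2E_s(t)$ the integrand is bounded by a constant (depending on $T,\alpha,|\Omega|,M_\gamma$) times $E_s(t)$, and integrating in $t$ and invoking the $E_s$ bound finishes it. Finally, rewriting the first equation of \eqref{eq:target_problem} as $\varepsilon\triangle\varphi^\varepsilon-\frac{W'(\varphi^\varepsilon)}{2\varepsilon}=\varepsilon\varphi_t^\varepsilon+G'(\varphi^\varepsilon)\bigl(-\gamma(u^\varepsilon)+\tilde{S}[\varphi^\varepsilon](t)\bigr)$, squaring, and using $\varepsilon^2\le\varepsilon$ for $\varepsilon\in(0,1)$ bounds the left-hand side by $2\varepsilon(\varphi_t^\varepsilon)^2+\frac{2}{\varepsilon}\bigl[G'(\varphi^\varepsilon)(\gamma(u^\varepsilon)-\tilde{S}[\varphi^\varepsilon](t))\bigr]^2$, so the last claim reduces to part (i) and the third estimate.

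The whole argument is the standard energy method for gradient flows, so I do not expect a genuine obstacle. The two points that require care are: (a) splitting Young's inequality so that only half of the dissipation $\int_\Omega\varepsilon(\varphi_t^\varepsilon)^2\,\mathrm{d}x$ is absorbed, leaving the other half available for the statement and for the consequences in part (ii); and (b) using the identity $(G')^2=2W$ so that the surface-tension-type term is controlled by the Modica--Mortola energy $E_s$ with no leftover negative power of $\varepsilon$. Tracking the constants carefully through the Gronwall step is what produces exactly $C(T)=2\mathrm{e}^{2M_\gamma^2 T}E(0)$.
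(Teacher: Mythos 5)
Your proposal is correct and follows essentially the same route as the paper: differentiate $E=E_s+E_p$, substitute the PDE so that the $\tilde S$-terms cancel, use the algebraic identity $(G')^2=2W$, split Young's inequality so only half the dissipation is absorbed, and close with Gronwall to obtain $C(T)=2\mathrm{e}^{2M_\gamma^2T}E(0)$. The only cosmetic difference is in the last step of part (i): you integrate the differential inequality directly against the already-obtained pointwise bound $E(t)\le\mathrm{e}^{2M_\gamma^2t}E(0)$, whereas the paper multiplies by the integrating factor $\mathrm{e}^{-2M_\gamma^2t}$ before integrating; both yield the identical constant, and your observation that $E_p(0)=0$ (hence $E(0)=E_s(0)$ is $\alpha$-independent and uniformly bounded by \eqref{eq:u0-phi0-bound}) usefully makes explicit something the paper leaves implicit.
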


\begin{proof}
(I)
Performing integration by parts, we have
\begin{equation}
    \frac{\mathrm{d}}{\mathrm{d}t}E_s(t)=
    \frac{\mathrm{d}}{\mathrm{d}t}
        \int_\Omega\left(
            \frac{\varepsilon|\nabla\varphi^\varepsilon|^2}{2}+\frac{W(\varphi^\varepsilon)}{2\varepsilon}
        \right)\,\mathrm{d}x
    =\int_\Omega\varepsilon\left(
        -\triangle\varphi^\varepsilon+\frac{W'(\varphi^\varepsilon)}{2\varepsilon^2}
    \right)\varphi_t^\varepsilon\,\mathrm{d}x,
\end{equation}
and a direct calculation yields that
\begin{align}
    \frac{\mathrm{d}}{\mathrm{d}t}E_p(t)
    &=\frac{\mathrm{d}}{\mathrm{d}t}\left[
        \frac{\alpha}{2}\left(
            \int_\Omega G(\varphi^\varepsilon)\,\mathrm{d}x
            -\int_\Omega G(\varphi_0^\varepsilon)\,\mathrm{d}x
        \right)^2
    \right]\\
    &=\alpha\left(
        \int_\Omega G(\varphi^\varepsilon)\,\mathrm{d}x
        -\int_\Omega G(\varphi_0^\varepsilon)\,\mathrm{d}x
    \right)\int_\Omega G'(\varphi^\varepsilon)\varphi_t^\varepsilon\,\mathrm{d}x.
\end{align}
Therefore, we have 
\begin{align}
    \frac{\mathrm{d}}{\mathrm{d}t}E(t)
    &=\frac{\mathrm{d}}{\mathrm{d}t}E_s(t)+\frac{\mathrm{d}}{\mathrm{d}t}E_p(t)\\
    &=\int_\Omega\left[
        -\varepsilon\triangle\varphi^\varepsilon
        +\frac{W'(\varphi^\varepsilon)}{2\varepsilon}
        +\alpha\left(
            \int_\Omega G(\varphi^\varepsilon)\,\mathrm{d}x
            -\int_\Omega G(\varphi_0^\varepsilon)\,\mathrm{d}x
        \right)G'(\varphi^\varepsilon)
    \right]\varphi_t^\varepsilon\,\mathrm{d}x\\
    &=\int_\Omega\left(
        -\varepsilon\varphi_t^\varepsilon+G'(\varphi^\varepsilon)\gamma(u^\varepsilon)
    \right)\varphi_t^\varepsilon\,\mathrm{d}x\\
    &=-\int_\Omega\varepsilon\left(\varphi_t^\varepsilon\right)^2\,\mathrm{d}x
    +\int_\Omega G'(\varphi^\varepsilon)\gamma(u^\varepsilon)\varphi_t^\varepsilon\,\mathrm{d}x.
\end{align}
It holds by definitions of $G$ and $W$ that 
\begin{equation}
    G'(\varphi^\varepsilon)=\sqrt{2W(\varphi^\varepsilon)}.
    \label{eq:Gp-W-relation}
\end{equation}
Here, note that for a solution $(\varphi^\varepsilon,u^\varepsilon)$ to \eqref{eq:target_problem}, it holds that $0\le\varphi^\varepsilon\le1$ and $u^\varepsilon\ge0$ by the maximum principle (see Theorem~\ref{thm:MP}).
Thus, we have
\begin{equation}
    \frac{\mathrm{d}}{\mathrm{d}t}E(t)
    =-\int_\Omega\varepsilon(\varphi_t^\varepsilon)^2\,\mathrm{d}x+\int_\Omega\sqrt{2W(\varphi^\varepsilon)}\gamma(u^\varepsilon)\varphi_t^\varepsilon\,\mathrm{d}x.
\end{equation}
Applying the Young's inequality, it holds that
\begin{align}
    \sqrt{2W(\varphi^\varepsilon)}\gamma(u^\varepsilon)\varphi_t^\varepsilon
    &\le\frac{1}{2}\frac{2W(\varphi
    ^\varepsilon)}{\varepsilon}\gamma^2(u^\varepsilon)+\frac{1}{2}\varepsilon\left(\varphi_t^\varepsilon\right)^2
    \le M_\gamma^2\frac{W(\varphi^\varepsilon)}{\varepsilon}+\frac{1}{2}\varepsilon\left(\varphi_t^\varepsilon\right)^2.
\end{align}
Therefore, it follows from the definition~\eqref{eq:energy} of energy $E$ that
\begin{align}
    \frac{\mathrm{d}}{\mathrm{d}t}E(t)
    &\le-\frac{1}{2}\int_\Omega\varepsilon\left(\varphi_t^\varepsilon\right)^2\,\mathrm{d}x+M_\gamma^2\int_\Omega\frac{W(\varphi^\varepsilon)}{\varepsilon}\,\mathrm{d}x\\
    &\le-\frac{1}{2}\int_\Omega\varepsilon\left(\varphi_t^\varepsilon\right)^2\,\mathrm{d}x
    +2M_\gamma^2E(t)\\
    &\le2M_\gamma^2E(t).
    \label{eq:dtdE}
\end{align}
Therefore, by using the Gronwall's inequality, we have the enegry estimate
\begin{equation}
    E(t)\le\mathrm{e}^{2M_\gamma^2t}E(0).
    \label{eq:E_bound}
\end{equation}
Since the inequality
\begin{equation}
    \frac{\mathrm{d}}{\mathrm{d}t}E(t)\le-\frac{1}{2}\int_\Omega\varepsilon(\varphi_t^\varepsilon)^2\,\mathrm{d}x+2M_\gamma^2E(t)
\end{equation}
holds by \eqref{eq:dtdE}, multiplying the both sides of the above inequality by $\mathrm{e}^{-2M_\gamma^2t}$, we have
\begin{equation}
    \frac{\mathrm{d}}{\mathrm{d}t}\left(E(t)\mathrm{e}^{-2M_\gamma^2t}\right)+\frac{1}{2}\mathrm{e}^{-2M_\gamma^2t}\int_\Omega\varepsilon\left(\varphi_t^\varepsilon\right)^2\,\mathrm{d}x\le0.
\end{equation}
Integrating the above inequality with respect to $t$ from $t=0$ to $t=T$, we obtain
\begin{equation}
    E(T)\mathrm{e}^{-2M_\gamma^2T}+\frac{1}{2}\int_0^T\mathrm{e}^{-2M_\gamma^2t}\int_\Omega\varepsilon\left(\varphi_t^\varepsilon\right)^2\,\mathrm{d}x\,\mathrm{d}t\le E(0).
\end{equation}
Since $\mathrm{e}^{-2M_\gamma^2t}\ge\mathrm{e}^{-2M_\gamma^2T}$ hold for all $t\in[0,T]$, we have
\begin{equation}
    E(T)+\int_0^T\int_\Omega\varepsilon\left(\varphi_t^\varepsilon\right)^2\,\mathrm{d}x\,\mathrm{d}t\le 2\mathrm{e}^{2M_\gamma^2T}E(0).
\end{equation}

(II)
The inequality~\eqref{eq:E_bound} yields that
\begin{equation}
    E_p(t)
    \le\mathrm{e}^{2M_\gamma^2t}E(0);
\end{equation}
that is,
\begin{equation}
    \left(
        \int_\Omega G(\varphi^\varepsilon)\,\mathrm{d}x
        -\int_\Omega G(\varphi_0^\varepsilon)\,\mathrm{d}x
    \right)^2
    \le\frac{2}{\alpha}\mathrm{e}^{2M_\gamma^2t}E(0).
\end{equation}
Therefore, fixing time $t$ arbitrarily and letting $\alpha\to\infty$, we see that
\begin{equation}
    \lim_{\alpha\to\infty}\int_\Omega G(\varphi^\varepsilon)\,\mathrm{d}x
    =\int_\Omega G(\varphi_0^\varepsilon)\,\mathrm{d}x.
\end{equation}

Using the relation~\eqref{eq:Gp-W-relation}, an elementary relation $(a+b)^2\le2(a^2+b^2)$, \eqref{eq:St_estimate}, and \eqref{eq:E_bound}, we have
\begin{align}
    &\int_0^T\int_\Omega\frac{1}{\varepsilon}\left[
        G'(\varphi^\varepsilon)\left(
            -\gamma(u^\varepsilon)+\tilde{S}[\varphi^\varepsilon](t)
        \right)
    \right]^2\,\mathrm{d}x\,\mathrm{d}t\\
    &\le\int_0^T\int_\Omega\frac{4W(\varphi^\varepsilon)}{\varepsilon}\left(
        \gamma^2(u^\varepsilon)+\tilde{S}[\varphi^\varepsilon](t)^2
    \right)\,\mathrm{d}x\,\mathrm{d}t\\
    &\le
    \left(M_\gamma^2+\frac{1}{9}\alpha^2|\Omega|^2\right)\int_0^T\int_\Omega\frac{4W(\varphi^\varepsilon)}{\varepsilon}\,\mathrm{d}x\,\mathrm{d}t\\
    &\le\left(M_\gamma^2+\frac{1}{9}\alpha^2|\Omega|^2\right)\int_0^T8E(t)\,\mathrm{d}t\\
    &\le\frac{4}{M_\gamma^2}\left(M_\gamma^2+\frac{1}{9}|\Omega|^2\alpha^2\right)\left(\mathrm{e}^{2M_\gamma^2T}-1\right)\mathrm{e}^{2M_\gamma^2T}E(0),
    \label{eq:l2estimate}
\end{align}
which implies that
\begin{equation}
    \sup_{\varepsilon>0}\int_0^T\int_\Omega\frac{1}{\varepsilon}\left[G'(\varphi^\varepsilon)\left(-\gamma(u^\varepsilon)+\tilde{S}[\varphi^\varepsilon](t)\right)\right]^2\,\mathrm{d}x\,\mathrm{d}t<\infty.
    \label{eq:rem-part_bounded}
\end{equation}
\end{proof}

\begin{remark}
    Although estimates in Lemma~\ref{lem:estimate} depend on $\alpha$, we will show that they are actually independent of $\alpha$ later.
\end{remark}

Lemma~\ref{lem:estimate} implies the following theorem.
See \cite{mugnai2008allen,mugnai2011convergence}  for details.

\begin{theorem}
    \label{thm:GM}
    Let $n=2,3$. 
    For each $\varepsilon>0$ and $t\in(0,T)$, define a Radon measure $\mu_t^\varepsilon$ on $\Omega$ by
    \begin{equation}
        \mathrm{d}\mu_t^\varepsilon
        =\left(
            \frac{\varepsilon|\nabla\varphi^\varepsilon|^2}{2}+\frac{W(\varphi^\varepsilon)}{2\varepsilon}
        \right)\,\mathrm{d}x.
    \end{equation}
    Then, there
    exists a family $\{\mu_t\}_{t\in[0,T)}$ of Radon measures on $\Omega$ such that the following hold.
\begin{enumerate}
    \item 
    There exists a sequence $\{\varepsilon_i\}_{i=1}^\infty$ such that $\varepsilon_i\searrow0$ and $\mu_t^{\varepsilon_i}\rightharpoonup\mu_t$ as $i\to\infty$; that is,
    \begin{equation}
        \int_\Omega\phi\,\mathrm{d}\mu_t^{\varepsilon_i}
        \longrightarrow
        \int_\Omega\phi\,\mathrm{d}\mu_t
    \end{equation}
    holds for any $\phi\in C_c(\Omega)$ and for any $t\geq 0$.
    Moreover,
    \begin{equation}
        \lim_{i\to\infty}\int_0^T\int_\Omega\left|\frac{\varepsilon_i|\nabla\varphi^{\varepsilon_i}|^2}{2}-\frac{W(\varphi^{\varepsilon_i})}{2\varepsilon_i}\right|\,\mathrm{d}x\,\mathrm{d}t=0
    \end{equation}
    holds.
    \item There exist a countably $(n-1)$-rectifiable set $M_t$ and $L_{\mathrm{loc}}^1$ function $\theta_t\colon M_t\rightarrow\mathbb{N}$ for a.e. $t\in(0,T)$ such that
    \begin{equation}
        \mu_t=c_0\theta_t\mathcal{H}^{n-1}|_{M_t};
    \end{equation}
    that is,
    \begin{equation}
        \int_\Omega\phi\,\mathrm{d}\mu_t
        =\int_{M_t}\phi c_0\theta_t\,\mathrm{d}\mathcal{H}^{n-1}
    \end{equation}
    holds for any $\phi\in C_c(\Omega)$.
    Here, the constant $c_0$ is given by
    \begin{equation}
        c_0\coloneqq\int_0^1\sqrt{W(s)}\,\mathrm{d}s=\frac{1}{6\sqrt{2}}.
    \end{equation}
    \item $\mu_t$ is an $L^2$-flow; that is,
    \begin{enumerate}
        \item there exists a square integrable generalized mean curvature vector $\vec{h}$, that is,
        \begin{align*}
            &\int_{0}^T\int_\Omega|\vec{h}|^2\,\mathrm{d}\mu_t\,\mathrm{d}t<\infty,\\
            &\int_\Omega\Div_{T_x\mu_t}\vec{\eta}\,\mathrm{d}\mu_t=-\int_\Omega
            \vec{\eta}\cdot\vec{h}\,\mathrm{d}\mu_t\quad\text{for all } \vec{\eta}\in C_c^1(\Omega;\mathbb{R}^n)\ \text{and a.e. } t\ge0,
        \end{align*}
        where $\Div_{T_x\mu_t}$ is the tangential divergence with respect to the approximate tangent space of $\mu_t$ (see \cite{mugnai2008allen}).
        \item there exists a generalized velocity vector $\vec{v}\in L_{\mathrm{loc}}^2((0,T);(L^2(\mu_t))^n)$; that is,
        \begin{enumerate}
            \item $\vec{v}$ is perpendicular to the approximate tangent space $T_x\mu_t$ for $\mu$-a.e. $(x,t)$, where $\mathrm{d}\mu=\mathrm{d}\mu_t\,\mathrm{d}$t;
            \item there exists a constant $C_T>0$ such that
            \begin{equation}
                \label{eq:estimate_eta}
                \left|
                    \int_0^T\int_\Omega\left(
                        \frac{\partial\eta}{\partial t}+\nabla\eta\cdot\vec{v}
                    \right)\,\mathrm{d}\mu_t\,\mathrm{d}t
                \right|
                \le C_T\|\eta\|_{C^0}
            \end{equation}
            holds for any $\eta\in C_c^1(\Omega\times(0,T))$;
            \item there exists $\vec{f}$ such that $\vec{v}=\vec{h}+\vec{f}$ holds.
        \end{enumerate}
    \end{enumerate}
\end{enumerate}
\end{theorem}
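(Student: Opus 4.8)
The plan is to read the first equation in \eqref{eq:target_problem} (with $\tau=\sigma=1$, after dividing by $\varepsilon$) as an $L^2$ gradient flow with a lower‑order forcing term,
\[
\varepsilon\varphi_t^\varepsilon=-w^\varepsilon+f^\varepsilon,\qquad
w^\varepsilon\coloneqq-\varepsilon\triangle\varphi^\varepsilon+\frac{W'(\varphi^\varepsilon)}{2\varepsilon},\qquad
f^\varepsilon\coloneqq G'(\varphi^\varepsilon)(\gamma(u^\varepsilon)-\tilde{S}[\varphi^\varepsilon](t)),
\]
and to apply the convergence theory for (perturbed) Allen--Cahn equations of Ilmanen, R\"oger--Sch\"atzle and Mugnai--R\"oger \cite{mugnai2008allen,mugnai2011convergence}, whose hypotheses are exactly the bounds recorded in Lemma~\ref{lem:estimate}. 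Since $E(0)=E_s(0)$ is bounded uniformly in $\varepsilon$ by \eqref{eq:u0-phi0-bound}, Lemma~\ref{lem:estimate}~(I) supplies $\sup_{t\in(0,T),\,\varepsilon\in(0,1)}E_s(t)<\infty$ and $\int_0^T\!\!\int_\Omega\varepsilon(\varphi_t^\varepsilon)^2\,\mathrm{d}x\,\mathrm{d}t\le C(T)$, while Lemma~\ref{lem:estimate}~(II) supplies $\sup_{\varepsilon>0}\int_0^T\!\!\int_\Omega\varepsilon^{-1}(f^\varepsilon)^2\,\mathrm{d}x\,\mathrm{d}t<\infty$ (recall $G'(\varphi^\varepsilon)=\sqrt{2W(\varphi^\varepsilon)}$ by \eqref{eq:Gp-W-relation}). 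Combining these through $(a+b)^2\le 2a^2+2b^2$ in
\[
\int_0^T\!\!\int_\Omega\frac{(w^\varepsilon)^2}{\varepsilon}\,\mathrm{d}x\,\mathrm{d}t
\le 2\int_0^T\!\!\int_\Omega\varepsilon(\varphi_t^\varepsilon)^2\,\mathrm{d}x\,\mathrm{d}t
+2\int_0^T\!\!\int_\Omega\frac{(f^\varepsilon)^2}{\varepsilon}\,\mathrm{d}x\,\mathrm{d}t
\]
shows that $\int_0^T\!\!\int_\Omega\varepsilon^{-1}(w^\varepsilon)^2\,\mathrm{d}x\,\mathrm{d}t$ is bounded uniformly in $\varepsilon\in(0,1)$, which is the $L^2$‑type control of the diffuse mean curvature $\varepsilon^{-1}w^\varepsilon\,\nabla\varphi^\varepsilon/|\nabla\varphi^\varepsilon|$ that the theory requires.

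\emph{Step 1 (compactness for all $t$).} For each fixed $t$ the total mass $\mu_t^\varepsilon(\Omega)=E_s(t)$ is uniformly bounded, so weak‑$*$ compactness of Radon measures gives subsequential limits along a countable dense set of times; the bound on $\int_0^T\!\!\int_\Omega\varepsilon(\varphi_t^\varepsilon)^2$ makes $t\mapsto\int_\Omega\phi\,\mathrm{d}\mu_t^\varepsilon$ almost $\tfrac12$‑H\"older in $t$ up to the discrepancy, and an Arzel\`a--Ascoli and diagonal argument upgrades this to $\mu_t^{\varepsilon_i}\rightharpoonup\mu_t$ for every $t\ge0$, which is the first half of part~(I). \emph{Step 2 (vanishing discrepancy).} Writing $\xi^\varepsilon\coloneqq\tfrac{\varepsilon|\nabla\varphi^\varepsilon|^2}{2}-\tfrac{W(\varphi^\varepsilon)}{2\varepsilon}$, one proves $\int_0^T\!\!\int_\Omega|\xi^\varepsilon|\,\mathrm{d}x\,\mathrm{d}t\to0$ by Ilmanen's monotonicity‑formula argument adapted to the present forcing, the extra terms produced by $f^\varepsilon$ being absorbed by the bounds on $\varepsilon^{-1}(f^\varepsilon)^2$ and $\varepsilon(\varphi_t^\varepsilon)^2$; this completes part~(I).

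\emph{Step 3 (structure of $\mu_t$).} Given the uniform energy bound, the vanishing discrepancy, and the $L^2$ bound on $\varepsilon^{-1/2}w^\varepsilon$, the R\"oger--Sch\"atzle integrality theorem (this is where $n=2,3$ is used) yields for a.e.\ $t$ that $\mu_t=c_0\theta_t\mathcal{H}^{n-1}|_{M_t}$ with $M_t$ countably $(n-1)$‑rectifiable and $\theta_t$ integer‑valued, and that $\mu_t$ admits a square‑integrable generalized mean curvature vector $\vec{h}$; here $c_0=\int_0^1\sqrt{W(s)}\,\mathrm{d}s=\tfrac1{6\sqrt2}$, which is consistent because the potential appears as $W/(2\varepsilon)$, so that the optimal‑profile constant is $\int_0^1\sqrt{2\cdot(W/2)}=\int_0^1\sqrt{W}$. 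This is parts~(II) and (III)(a). \emph{Step 4 ($L^2$‑flow).} The diffuse velocities $\vec{v}^\varepsilon\coloneqq-\varphi_t^\varepsilon\,\nabla\varphi^\varepsilon/|\nabla\varphi^\varepsilon|$ are bounded in $L^2(\mathrm{d}\mu_t^\varepsilon\,\mathrm{d}t)$ by the dissipation estimate $\int_0^T\!\!\int_\Omega\varepsilon(\varphi_t^\varepsilon)^2\,\mathrm{d}x\,\mathrm{d}t$, so in the limit they yield $\vec{v}\in L^2_{\mathrm{loc}}((0,T);(L^2(\mu_t))^n)$ which is perpendicular to the approximate tangent space since every $\vec{v}^\varepsilon$ is; testing the evolution equation against $\eta\,\nabla\varphi^\varepsilon$ and integrating in $t$ gives the bounded‑measure estimate \eqref{eq:estimate_eta}; and multiplying $\varepsilon\varphi_t^\varepsilon=-w^\varepsilon+f^\varepsilon$ by $\nabla\varphi^\varepsilon/(\varepsilon|\nabla\varphi^\varepsilon|)$ and passing to the limit gives $\vec{v}=\vec{h}+\vec{f}$, where $\vec{f}$ is the weak $L^2(\mu)$‑limit of $-f^\varepsilon\,\nabla\varphi^\varepsilon/(\varepsilon|\nabla\varphi^\varepsilon|)$, finite because $\int_0^T\!\!\int_\Omega\varepsilon^{-1}(f^\varepsilon)^2\,\mathrm{d}x\,\mathrm{d}t<\infty$. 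This is part~(III)(b).

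The hard part is Step~2, the space‑time vanishing of the discrepancy measure: once it and the mean‑curvature bound are in hand, Step~1, the R\"oger--Sch\"atzle structure of Step~3, and the $L^2$‑flow identification of Step~4 follow from the existing machinery, whereas the discrepancy estimate relies on a delicate monotonicity‑formula argument. In the present model the point to verify is that the surfactant forcing $G'(\varphi^\varepsilon)(\gamma(u^\varepsilon)-\tilde{S}[\varphi^\varepsilon](t))$ enters the monotonicity inequality only through quantities controlled by Lemma~\ref{lem:estimate}, so that no new smallness is required; this holds because $\gamma$ is bounded, $\tilde{S}[\varphi^\varepsilon](t)$ is bounded by \eqref{eq:St_estimate}, and $u^\varepsilon\to u$ strongly in $L^2(\Omega\times(0,T))$ by Lemma~\ref{lem:u_converge}. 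These are exactly the hypotheses of the perturbed Allen--Cahn convergence results \cite{mugnai2008allen,mugnai2011convergence}, and the theorem follows.
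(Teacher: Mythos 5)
Your outline is correct and follows essentially the same route as the paper: the paper disposes of Theorem~\ref{thm:GM} in one sentence by noting that Lemma~\ref{lem:estimate} supplies the hypotheses and then pointing to \cite{mugnai2008allen,mugnai2011convergence}, and you fill in precisely the standard argument those references run (uniform mass bound plus dissipation bound gives compactness for all $t$, vanishing discrepancy, R\"oger--Sch\"atzle integrality, and $L^2$-flow identification). In particular your derivation of $\sup_\varepsilon\int_0^T\!\int_\Omega\varepsilon^{-1}(w^\varepsilon)^2\,\mathrm{d}x\,\mathrm{d}t<\infty$ from Lemma~\ref{lem:estimate}(I)--(II) via $(a+b)^2\le 2a^2+2b^2$ is exactly the ``in particular'' estimate in Lemma~\ref{lem:estimate}(II) in the form the Mugnai--R\"oger theory needs, and your verification that $c_0=\int_0^1\sqrt{W}=1/(6\sqrt2)$ is the right optimal-profile constant for the $W/(2\varepsilon)$ normalization is correct. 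One small inaccuracy worth flagging: in Step~2 you attribute the space-time vanishing of the discrepancy to ``Ilmanen's monotonicity-formula argument,'' but in the forced setting invoked here (and in the references the paper cites) that conclusion is obtained by the R\"oger--Sch\"atzle blow-up/local-monotonicity approach extended by Mugnai--R\"oger, not by Huisken--Ilmanen parabolic monotonicity; this does not affect the validity of the outline, since you correctly identify which estimates from Lemma~\ref{lem:estimate} feed into it, but the citation should be to those works rather than to Ilmanen's original proof.
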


We derive an explicit form of $\vec{f}$ in (III-b-iii) later.

\begin{remark}
    Suppose that there exists a family of smooth hypersurfaces $\{M_t\}_{t\in[0,T)}$ and its velocity vector $\vec{v}$.
    Then, it holds that
    \begin{equation}
        \frac{\mathrm{d}}{\mathrm{d}t}\int_{M_t}\eta\,\mathrm{d}\mathcal{H}^{n-1}
        =\int_{M_t}\left[\left(-\eta\vec{h}+\nabla^\perp\eta\right)\cdot\vec{v}+\eta_t\right]\,\mathrm{d}\mathcal{H}^{n-1}
    \end{equation}
    for any $\eta\in C_0^1(\Omega\times(0,T))$ (see \cite{ecker2004regularity,tonegawa2019brakke}).
    Therefore, integrating the above relation in time yields
    \begin{equation}
        \left.\int_{M_t}\eta\,\mathrm{d}\mathcal{H}^{n-1}\right|_{t=0}^{t=T}
        =\int_0^T\int_{M_t}\left[\left(
            -\eta\vec{h}+\nabla^\perp\eta
        \right)\cdot\vec{v}+\eta_t\right]\,\mathrm{d}\mathcal{H}^{n-1}\,\mathrm{d}t.
    \end{equation}
    Since the left-hand side is equal to $0$ because of $\eta\in C_0^1(\Omega\times(0,T))$, we obtain
    \begin{equation}
        \left|
            \int_0^T\int_{M_t}\left(
                \nabla^\perp\eta\cdot\vec{v}+\eta_t
            \right)\,\mathrm{d}\mathcal{H}^{n-1}\,\mathrm{d}t
        \right|
        \le\|\eta\|_{C^0(\Omega\times(0,T))}\times\int_0^T\int_{M_t}|\vec{h}\cdot\vec{v}|\,\mathrm{d}\mathcal{H}^{n-1}\,\mathrm{d}t.
    \end{equation}
    Summarizing the above, we have shown that if $\{M_t\}$ is smooth and $\vec{v}$ is a normal velocity vector, then the estaimte
    \begin{equation}
        \left|
            \int_0^T\int_{M_t}\left(
                \eta_t+\nabla\eta\cdot\vec{v}
            \right)\,\mathrm{d}\mathcal{H}^{n-1}\,\mathrm{d}t
        \right|
        \le C_T\|\eta\|_{C^0(\Omega\times(0,T))}
        \label{eq:v_characterize}
    \end{equation}
    holds.
    Conversely, if \eqref{eq:v_characterize} holds for any $\eta$, there exists a normal velocity vector $\vec{v}$ of $M_t$~\cite{stuvard2024existence}.
\end{remark}

\begin{remark}
    From the first equation in \eqref{eq:target_problem}, we obtain
    \begin{equation}
        \frac{-\varphi_t^\varepsilon}{|\nabla\varphi^\varepsilon|}
        =\frac{-\triangle\varphi^\varepsilon+\frac{W'(\varphi^\varepsilon)}{2\varepsilon^2}}{|\nabla\varphi^\varepsilon|}-\frac{\sqrt{2W(\varphi^\varepsilon)}\left(\gamma(u^\varepsilon)-\tilde{S}[\varphi^\varepsilon]\right)}{\varepsilon|\nabla\varphi^\varepsilon|}
    \end{equation}
    on $\{|\nabla\varphi^\varepsilon|\neq0\}$.
    The left-hand side, the first and the second terms in the right-hand side correspond to normal velocity, mean curvature, and forcing terms, respectively.
    Since we can regard
    \begin{equation}
        \frac{\varepsilon|\nabla\varphi^\varepsilon|^2}{2}
        \approx\frac{W(\varphi^\varepsilon)}{2\varepsilon}
    \end{equation}
    in equilibrium state, the second term in the right-hand side is approximately equal to
    \begin{equation}
        -\sqrt{2}\left(\gamma(u^\varepsilon)-\tilde{S}[\varphi^\varepsilon]\right).
    \end{equation}
\end{remark}

\begin{lemma}
    \label{lem:phi_converge}
    There exists a subsequence of $\{\varphi^\varepsilon\}_{\varepsilon>0}$, denoted by the same symbol, and $\varphi\in BV(\Omega\times[0,T))\cap C^{1/2}([0,T);L^1(\Omega))$ such that 
    \begin{alignat}{2}
        &\varphi^\varepsilon\to\varphi&\qquad&\text{for a.e. } (x,t)\in\Omega\times(0,T),\\
        &\varphi^\varepsilon\to\varphi&\qquad&\text{in }\ L^1(\Omega\times(0,T)),\\
        &6G(\varphi^\varepsilon(\cdot,t))\to\varphi(\cdot,t)&\qquad&\text{in }\ L^1(\Omega)\ \text{ for all }\ t\ge0.
    \end{alignat}
    In particular, $\varphi(x,t)=0$ or $1$ for a.e. $(x,t)\in\Omega\times(0,\infty)$, and $\varphi (\cdot,t)=6G(\varphi(\cdot,t))=\chi_{\Omega_{\mathrm{in}}}(\varphi,t) \in BV (\Omega)$ for any $t\geq 0$.
\end{lemma}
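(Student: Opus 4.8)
The plan is to work not with $\varphi^\varepsilon$ itself but with the reparametrised quantity $G(\varphi^\varepsilon)$. The algebraic observation that makes this work is that for $s\in[0,1]$ one has $\sqrt{2W(s)}=s(1-s)=G'(s)$, so that $G(s)=\int_0^s\sqrt{2W(\tau)}\,\mathrm{d}\tau$; moreover $G$ is a strictly increasing homeomorphism of $[0,1]$ onto $[0,\tfrac16]$ with continuous inverse $G^{-1}$, and by Theorem~\ref{thm:MP} we have $0<\varphi^\varepsilon<1$, hence $\varphi^\varepsilon=G^{-1}(G(\varphi^\varepsilon))$ and $0\le G(\varphi^\varepsilon)\le\tfrac16$.

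First I would show that $\{G(\varphi^\varepsilon)\}$ is bounded in $BV(\Omega\times(0,T))$. By the chain rule and Young's inequality, $|\nabla G(\varphi^\varepsilon)|=\sqrt{2W(\varphi^\varepsilon)}\,|\nabla\varphi^\varepsilon|\le\tfrac\varepsilon2|\nabla\varphi^\varepsilon|^2+\tfrac{W(\varphi^\varepsilon)}{2\varepsilon}$, so $\int_\Omega|\nabla G(\varphi^\varepsilon)(\cdot,t)|\,\mathrm{d}x\le E_s(t)$, which is uniformly bounded by Lemma~\ref{lem:estimate}(II); and similarly $|\partial_t G(\varphi^\varepsilon)|=\sqrt{2W(\varphi^\varepsilon)}\,|\varphi_t^\varepsilon|\le\tfrac{W(\varphi^\varepsilon)}{\varepsilon}+\tfrac\varepsilon2(\varphi_t^\varepsilon)^2$, whose integral over $\Omega\times(0,T)$ is at most $\int_0^T 2E_s(t)\,\mathrm{d}t+\tfrac12\int_0^T\!\int_\Omega\varepsilon(\varphi_t^\varepsilon)^2\,\mathrm{d}x\,\mathrm{d}t\le C(T)$ by the two parts of Lemma~\ref{lem:estimate}. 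Writing the same integrand as $\sqrt{2W(\varphi^\varepsilon)/\varepsilon}\cdot\sqrt\varepsilon\,|\varphi_t^\varepsilon|$ and applying Cauchy--Schwarz on $\Omega\times(t_1,t_2)$ also yields the uniform time-equicontinuity $\|G(\varphi^\varepsilon(\cdot,t_2))-G(\varphi^\varepsilon(\cdot,t_1))\|_{L^1(\Omega)}\le C(T)\,|t_2-t_1|^{1/2}$ for all $t_1,t_2\in[0,T)$, using $\int_{t_1}^{t_2}4E_s(t)\,\mathrm{d}t\le C\,|t_2-t_1|$ and Lemma~\ref{lem:estimate}(I).

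The compactness step is then routine. Since bounded subsets of $BV(\Omega)$ are precompact in $L^1(\Omega)$, a diagonal extraction over a countable dense set of times together with the uniform Hölder-$\tfrac12$ estimate produces a subsequence and a limit $g\in C^{1/2}([0,T);L^1(\Omega))$ with $g(\cdot,t)\in BV(\Omega)$ such that $G(\varphi^\varepsilon(\cdot,t))\to g(\cdot,t)$ in $L^1(\Omega)$ for \emph{every} $t\in[0,T)$; dominated convergence (the integrands are bounded by $\tfrac16$) upgrades this to $G(\varphi^\varepsilon)\to g$ in $L^1(\Omega\times(0,T))$, and lower semicontinuity of the total variation together with the uniform $BV(\Omega\times(0,T))$ bound and the trace at $t=0$ coming from the $C^{1/2}$ regularity give $g\in BV(\Omega\times[0,T))$. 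Passing to a further subsequence so that $G(\varphi^\varepsilon)\to g$ a.e.\ in $\Omega\times(0,T)$, continuity of $G^{-1}$ and $\varphi^\varepsilon=G^{-1}(G(\varphi^\varepsilon))$ give $\varphi^\varepsilon\to\varphi:=G^{-1}(g)$ a.e., hence $\varphi^\varepsilon\to\varphi$ in $L^1(\Omega\times(0,T))$ by dominated convergence.

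It remains to identify $\varphi$. Because $\sup_{t\in(0,T),\,\varepsilon>0}E_s(t)<\infty$, we have $\int_0^T\!\int_\Omega W(\varphi^\varepsilon)\,\mathrm{d}x\,\mathrm{d}t\le\varepsilon\int_0^T 2E_s(t)\,\mathrm{d}t\to0$, so along a further subsequence $W(\varphi^\varepsilon)\to0$ a.e.; combined with $\varphi^\varepsilon\to\varphi$ a.e.\ and continuity of $W$, this forces $W(\varphi)=0$, i.e.\ $\varphi(x,t)\in\{0,1\}$ for a.e.\ $(x,t)$. Consequently $g=G(\varphi)$, and since $6G(0)=0$ and $6G(1)=1$ we get $6g=6G(\varphi)=\varphi$ a.e.; therefore $\varphi=6g\in BV(\Omega\times[0,T))\cap C^{1/2}([0,T);L^1(\Omega))$, the convergence $6G(\varphi^\varepsilon(\cdot,t))\to\varphi(\cdot,t)$ in $L^1(\Omega)$ holds for all $t\ge0$ (as $T$ is arbitrary), and for each $t$ one has $\varphi(\cdot,t)=6G(\varphi(\cdot,t))=\chi_{\{\varphi(\cdot,t)>1/2\}}=\chi_{\Omega_{\mathrm{in}}}(\varphi,t)\in BV(\Omega)$, using $\varphi(\cdot,t)\in\{0,1\}$ a.e. I expect the main obstacle to be the joint space--time $BV$ control of $G(\varphi^\varepsilon)$ together with the Hölder-in-time estimate needed for convergence at \emph{every} $t$ rather than merely a.e.\ $t$; once $G$ is recognised as the primitive $\int_0^{s}\sqrt{2W(\tau)}\,\mathrm{d}\tau$, these reduce to Young's and Cauchy--Schwarz inequalities applied to the energy estimates of Lemma~\ref{lem:estimate}.
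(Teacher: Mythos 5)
Your argument is correct and reconstructs the same Modica--Mortola/$BV$-compactness strategy that the paper delegates to \cite[Proposition~8.3]{takasao2016existence}: using $G'=\sqrt{2W}$, Lemma~\ref{lem:estimate} gives a uniform $BV(\Omega\times(0,T))$ bound and H\"older-$\tfrac12$ time-equicontinuity in $L^1(\Omega)$ for $G(\varphi^\varepsilon)$, from which an Arzel\`a--Ascoli diagonal argument plus $\int_0^T\int_\Omega W(\varphi^\varepsilon)\,\mathrm{d}x\,\mathrm{d}t\to 0$ yield the characteristic-function limit. One minor numerical slip worth fixing: Young's inequality gives $\sqrt{2W(\varphi^\varepsilon)}\,|\nabla\varphi^\varepsilon|\le \tfrac{\varepsilon}{2}|\nabla\varphi^\varepsilon|^2+\tfrac{W(\varphi^\varepsilon)}{\varepsilon}$, not $\tfrac{W(\varphi^\varepsilon)}{2\varepsilon}$ (the latter fails when $\varepsilon|\nabla\varphi^\varepsilon|^2=2W(\varphi^\varepsilon)/\varepsilon$), so the correct conclusion is $\int_\Omega|\nabla G(\varphi^\varepsilon)|\,\mathrm{d}x\le 2E_s(t)$ rather than $E_s(t)$; this is harmless for the uniform bound but should be stated correctly, and it matches the inequality the paper itself uses later.
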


The above lemma can be shown by the method of proving \cite[Proposition 8.3]{takasao2016existence}.
Here, note that estimates in Lemma~\ref{lem:estimate} play essential roles.

\begin{remark}
    \label{rem:nu}
    Introduce a measure $\nu_t$ by
    \begin{equation}
        \nu_t\coloneqq\theta_t\mathcal{H}^{n-1}|_{M_t},
        \label{eq:nu}
    \end{equation}
    where $M_t$ and $\theta_t$ are introduced in Theorem~\ref{thm:GM} (II).
    We see that
    \begin{equation}
        \nu_t
        =\frac{1}{c_0}\mu_t
        =6\sqrt{2}\mu_t.
        \label{eq:mu_nu}
    \end{equation}
    Note that $\nu_t$ is the surface measure for $\partial^\ast\Omega_{\mathrm{in}}(\varphi,t)$ if $\theta_t\equiv1$ \cite[Proposition 8.3]{takasao2016existence}, where $\partial^\ast\Omega_{\mathrm{in}}(\varphi,t)$ denotes the reduced boundary of $\Omega_{\mathrm{in}}(\varphi,t)$.
    Since $\mathcal{H}^{n-1}|_{\partial^\ast\Omega_{\mathrm{in}}(\varphi,t)}\ll\mu_t$ holds~\cite[Proposition 8.3]{takasao2016existence}, the Radon--Nikod\'ym derivative satisfies the relation
    \begin{equation}
        \frac{\mathrm{d}\mathcal{H}^{n-1}|_{\partial^\ast\Omega_{\mathrm{in}}(\varphi,t)}}{\mathrm{d}\mu_t}
        =6\sqrt{2}\frac{\mathrm{d}\mathcal{H}^{n-1}|_{\partial^\ast\Omega_{\mathrm{in}}(\varphi,t)}}{\mathrm{d}\nu_t}.
        \label{eq:RD-der_rel}
    \end{equation}
\end{remark}

\begin{proposition}
    Let $(\{u^\varepsilon\}_{\varepsilon>0},u)$ be a pair in Lemma~\ref{lem:u_converge} and $(\{\varphi^\varepsilon\}_{\varepsilon>0},\varphi)$ be a pair in Lemma~\ref{lem:phi_converge}.
    Suppose that there exists a subsequence of $\{u^\varepsilon\}_{\varepsilon>0}$, denoted by the same symbol, such that $\{u_0^\varepsilon\}_{\varepsilon>0}$ weakly converges to $u_0$ in $L^2(\Omega)$.
    Then, the function $u$ is a weak solution to
    \begin{equation}
        \begin{dcases*}
            \frac{\partial u}{\partial t}=\triangle u-ku+\varphi&in $\Omega\times(0,T)$,\\
            u|_{t=0}=u_0&in $\Omega$.
        \end{dcases*}
        \label{eq:u_limit}
    \end{equation}
\end{proposition}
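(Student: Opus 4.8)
The plan is to pass to the limit in the space-time weak formulation of the second equation in \eqref{eq:target_problem}. Since $(\varphi^\varepsilon,u^\varepsilon)$ is a classical solution and $\Omega=(\mathbb{R}/\mathbb{Z})^n$ has no boundary, I would first multiply $u_t^\varepsilon=\triangle u^\varepsilon-ku^\varepsilon+\varphi^\varepsilon$ by a test function $\psi\in C_c^1(\Omega\times[0,T))$ and integrate by parts in space and time to obtain
\begin{equation}
    -\int_0^T\int_\Omega u^\varepsilon\psi_t\,\mathrm{d}x\,\mathrm{d}t-\int_\Omega u_0^\varepsilon\psi(\cdot,0)\,\mathrm{d}x
    =-\int_0^T\int_\Omega\nabla u^\varepsilon\cdot\nabla\psi\,\mathrm{d}x\,\mathrm{d}t-k\int_0^T\int_\Omega u^\varepsilon\psi\,\mathrm{d}x\,\mathrm{d}t+\int_0^T\int_\Omega\varphi^\varepsilon\psi\,\mathrm{d}x\,\mathrm{d}t,
\end{equation}
where I used $\psi(\cdot,T)=0$. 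It then suffices to send $\varepsilon=\varepsilon_i\searrow0$ along the subsequence of Lemma~\ref{lem:u_converge} and Lemma~\ref{lem:phi_converge} in each term and to identify the limit with the weak formulation of \eqref{eq:u_limit}. (The hypothesis that $u_0^\varepsilon\rightharpoonup u_0$ in $L^2(\Omega)$ along a further subsequence is automatic from \eqref{eq:u0-phi0-bound}; I just keep that subsequence fixed.)

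For the limit passage I would invoke the compactness already prepared: by Lemma~\ref{lem:u_converge}, $u^\varepsilon\to u$ strongly in $L^2(\Omega\times(0,T))$ and $\nabla u^\varepsilon\rightharpoonup\nabla u$ weakly in $L^2(\Omega\times(0,T))$, so the terms $\int u^\varepsilon\psi_t$, $\int u^\varepsilon\psi$, and $\int\nabla u^\varepsilon\cdot\nabla\psi$ converge, each being the pairing of a (weakly or strongly) convergent sequence against a fixed $L^2$ function. For the source term I would use $\varphi^\varepsilon\to\varphi$ in $L^1(\Omega\times(0,T))$ from Lemma~\ref{lem:phi_converge} together with the bound $0\le\varphi^\varepsilon\le1$ from Theorem~\ref{thm:MP}: since $\psi$ is bounded, $\int\varphi^\varepsilon\psi\to\int\varphi\psi$ (equivalently, dominated convergence upgrades the a.e.\ convergence to $L^2$). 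The initial term passes to the limit because $u_0^\varepsilon\rightharpoonup u_0$ in $L^2(\Omega)$ and $\psi(\cdot,0)\in L^2(\Omega)$. Passing to the limit thus gives
\begin{equation}
    -\int_0^T\int_\Omega u\psi_t\,\mathrm{d}x\,\mathrm{d}t-\int_\Omega u_0\psi(\cdot,0)\,\mathrm{d}x
    =-\int_0^T\int_\Omega\nabla u\cdot\nabla\psi\,\mathrm{d}x\,\mathrm{d}t-k\int_0^T\int_\Omega u\psi\,\mathrm{d}x\,\mathrm{d}t+\int_0^T\int_\Omega\varphi\psi\,\mathrm{d}x\,\mathrm{d}t
\end{equation}
for every $\psi\in C_c^1(\Omega\times[0,T))$, which is exactly the weak form of \eqref{eq:u_limit}. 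Since $u\in H^1(\Omega\times(0,T))\hookrightarrow C([0,T];L^2(\Omega))$ by Lemma~\ref{lem:u_converge}, the trace $u(\cdot,0)$ is well defined and this identity encodes $u(\cdot,0)=u_0$.

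I do not expect a serious obstacle here: this is the textbook compactness/pass-to-the-limit argument for a linear parabolic equation with a bounded right-hand side, and every convergence it needs has been established in Lemmas~\ref{lem:u_converge} and \ref{lem:phi_converge}. The only two points I would be careful to spell out are (i) that $\varphi^\varepsilon$ converges only in $L^1$ a priori, which is harmless against the bounded test function $\psi$ (or can be promoted to $L^2$ via $0\le\varphi^\varepsilon\le1$ and dominated convergence), and (ii) that the initial datum is correctly transmitted to the limit, which is automatic once the space-time weak formulation above is used and the assumed weak $L^2$ convergence of $u_0^\varepsilon$ is invoked.
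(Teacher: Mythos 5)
Your argument is correct and is essentially the paper's proof: test the second equation of \eqref{eq:target_problem}, then pass to the limit using the weak $H^1$/strong $L^2$ convergence of $u^\varepsilon$ from Lemma~\ref{lem:u_converge}, the strong $L^1$ convergence and uniform boundedness of $\varphi^\varepsilon$ from Lemma~\ref{lem:phi_converge} and Theorem~\ref{thm:MP}, and the assumed weak $L^2$ convergence of the initial data. The only (harmless) variation is that you integrate by parts in time with test functions supported in $\Omega\times[0,T)$, so the initial condition is built directly into the weak formulation, whereas the paper tests against $\eta\in C_0^\infty(\Omega\times(0,T))$, uses the weak $L^2$ convergence of $u_t^\varepsilon$ to pass the limit in $\int u_t^\varepsilon\eta$, and then recovers $u|_{t=0}=u_0$ separately via the trace theorem for $H^1(\Omega\times(0,T))$.
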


\begin{proof}
    Let $\eta\in C_0^\infty(\Omega\times(0,T))$.
    Multiplying the second equation in \eqref{eq:target_problem} by $\eta$ and performing integration by parts, we obtain
    \begin{equation}
        \int_0^T\int_\Omega u_t^\varepsilon\eta\,\mathrm{d}x\,\mathrm{d}t
        =\int_0^T\int_\Omega\left(-\nabla u^\varepsilon\cdot\nabla\eta-ku^\varepsilon\eta+\varphi^\varepsilon\eta\right)\,\mathrm{d}x\,\mathrm{d}t.
    \end{equation}
    Since $u^\varepsilon$ weakly converges to $u$ in $H^1(\Omega\times(0,T))$ by Lemma~\ref{lem:u_converge}, we have
    \begin{align}
        &\int_0^T\int_\Omega u_t^\varepsilon\eta\,\mathrm{d}x\,\mathrm{d}t\longrightarrow\int_0^T\int_\Omega u_t\eta\,\mathrm{d}x\,\mathrm{d}t,\\
        &\int_0^T\int_\Omega\nabla u^\varepsilon\cdot\nabla\eta\,\mathrm{d}x\,\mathrm{d}t\longrightarrow\int_0^T\int_\Omega\nabla u\cdot\nabla\eta\,\mathrm{d}x\,\mathrm{d}t.
    \end{align}
    Since $u^\varepsilon$ strongly converges to $u$ in $L^2(\Omega\times(0,T))$ by Lemma~\ref{lem:u_converge}, it holds that
    \begin{equation}
        \int_0^T\int_\Omega u^\varepsilon\eta\,\mathrm{d}x\,\mathrm{d}t\longrightarrow\int_0^T\int_\Omega u\eta\,\mathrm{d}x\,\mathrm{d}t.
    \end{equation}
    Since $\varphi^\varepsilon$ strongly converges to $\varphi$ in $L^1(\Omega\times(0,T))$ by Lemma~\ref{lem:phi_converge}, we have
    \begin{equation}
        \int_0^T\int_\Omega\varphi^\varepsilon\eta\,\mathrm{d}x\,\mathrm{d}t\longrightarrow\int_0^T\int_\Omega\varphi\eta\,\mathrm{d}x\,\mathrm{d}t.
    \end{equation}
    Combining the above relations, we arrive at
    \begin{equation}
        \int_0^T\int_\Omega u_t\eta\,\mathrm{d}x\,\mathrm{d}t
        =\int_0^T\int_\Omega\left(-\nabla u\cdot\nabla\eta-ku\eta+\varphi\eta\right)\,\mathrm{d}x\,\mathrm{d}t.
    \end{equation}

    Concerning the initial value, since $u|_{t=0}\in H^{1/2}(\Omega)$ holds by the trace theorem, $u|_{t=0}$ is well-defined and the assumption yields that $u|_{t=0}=u_0$ holds.
\end{proof}

\section{Main results}

This section derives the interface equation, the singular-limit of \eqref{eq:target_problem}.

First we show the $L^2$-estimate for the non-local term. We note that the boundedness is clear from \eqref{eq:l2estimate} and the following estimate is not necessary for the existence theorem. However, 
we emphasise that the $L^2$-estimate does not depend on $\alpha$.

\begin{theorem}
Assume that there exists $C_1>0$ such that
\[
1 - \frac{6}{|\Omega|} \int _\Omega G(\varphi_0 ^\varepsilon) \,\mathrm{d}x \geq C_1,
\qquad
\int _\Omega G(\varphi_0 ^\varepsilon) \,\mathrm{d}x\geq C_1,
\qquad \varepsilon \in (0,1).
\]
Then, there exists a constant $C_2>0$ such that for sufficiently small $\varepsilon$ and large $\alpha$, it holds that
\[
\int _0 ^T |\tilde{S}[\varphi^\varepsilon](t)|^2 \,\mathrm{d}t \leq C_2 \left(1 + \mathrm{e}^{2M_\gamma^2T}E(0)\right )
\left( 
(1+M_\gamma ^2)T + C(T)
\right),
\]
where $C(T)$ is given by \eqref{eq:CT}.
\end{theorem}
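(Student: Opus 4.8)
The statement is an \emph{a priori} $L^2$-in-time bound for the non-local coefficient $\tilde S[\varphi^\varepsilon]$ that is uniform in $\alpha$; the natural strategy is to recognise $\tilde S$ as the solution of a scalar ODE with a strong linear damping and to exploit the two hypotheses on $\int_\Omega G(\varphi_0^\varepsilon)\,\mathrm dx$ to keep the damping coefficient bounded away from zero. First I would differentiate $m^\varepsilon(t):=\int_\Omega G(\varphi^\varepsilon)\,\mathrm dx$ along the first equation of \eqref{eq:target_problem}. Integrating by parts and using $G'(\varphi^\varepsilon)^2=2W(\varphi^\varepsilon)$ (cf.\ \eqref{eq:Gp-W-relation}) and $G''(\varphi^\varepsilon)=1-2\varphi^\varepsilon$, one obtains, with $A(t):=\tfrac{2}{\varepsilon}\int_\Omega W(\varphi^\varepsilon)\,\mathrm dx\ge 0$,
\[
\frac{\mathrm d}{\mathrm dt}\tilde S[\varphi^\varepsilon]+\alpha A(t)\,\tilde S[\varphi^\varepsilon]=\alpha\bigl(P(t)+Q(t)\bigr),\qquad \tilde S[\varphi^\varepsilon](0)=0,
\]
where $Q(t)=\tfrac1\varepsilon\int_\Omega 2W(\varphi^\varepsilon)\gamma(u^\varepsilon)\,\mathrm dx$ satisfies $0\le Q\le M_\gamma A$, and $P(t)=-\int_\Omega(1-2\varphi^\varepsilon)\bigl(|\nabla\varphi^\varepsilon|^2+\tfrac{W(\varphi^\varepsilon)}{\varepsilon^2}\bigr)\,\mathrm dx$ is a discrepancy-type term. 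Duhamel's formula then gives $\tilde S[\varphi^\varepsilon](t)=\alpha\int_0^t e^{-\alpha\int_s^t A(r)\,\mathrm dr}\bigl(P(s)+Q(s)\bigr)\,\mathrm ds$.

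The core of the argument is a lower bound $A(t)\ge c_\ast>0$ valid for $\varepsilon$ small and $\alpha$ large, with $c_\ast$ depending only on $|\Omega|,C_1,M_\gamma,T,E(0)$. From $E_p(t)\le E(t)\le \mathrm e^{2M_\gamma^2 t}E(0)$ (Lemma~\ref{lem:estimate}) one has $(m^\varepsilon(t)-m^\varepsilon_0)^2\le \tfrac{2}{\alpha}\mathrm e^{2M_\gamma^2 T}E(0)$, so for $\alpha$ large $m^\varepsilon(t)$ remains in a compact subinterval of $(0,|\Omega|/6)$ fixed by the two hypotheses. If $\int_\Omega W(\varphi^\varepsilon)\,\mathrm dx\ge\varepsilon$ we are done; otherwise $\varphi^\varepsilon(\cdot,t)$ is $O(\sqrt\varepsilon)$-close in $L^2$ to $\chi_{\{\varphi^\varepsilon>1/2\}}$, hence $6G(\varphi^\varepsilon(\cdot,t))$ is $L^1$-close to the same indicator, so $|\{\varphi^\varepsilon(\cdot,t)>1/2\}|=6m^\varepsilon(t)+O(\sqrt\varepsilon)$ is bounded away from $0$ and $|\Omega|$. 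Then the elementary inequality $\bigl(\int_\Omega\tfrac{W(\varphi^\varepsilon)}{\varepsilon}\bigr)\bigl(\int_\Omega\varepsilon|\nabla\varphi^\varepsilon|^2\bigr)\ge\tfrac12\bigl(\int_\Omega|\nabla G(\varphi^\varepsilon)|\bigr)^2$, the torus isoperimetric inequality applied through the coarea formula to the super-level sets of $6G(\varphi^\varepsilon)$, and the upper bound $\int_\Omega\varepsilon|\nabla\varphi^\varepsilon|^2\le 2E_s(t)\le 2\mathrm e^{2M_\gamma^2T}E(0)$ together isolate $\int_\Omega\tfrac{W(\varphi^\varepsilon)}{\varepsilon}\,\mathrm dx\ge c_\ast/2$, i.e.\ $A(t)\ge c_\ast$.

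For the forcing, Lemma~\ref{lem:estimate}(I) gives $\int_0^T\!\!\int_\Omega\varepsilon(\varphi_t^\varepsilon)^2\le C(T)$, whence by Cauchy--Schwarz and $G'(\varphi^\varepsilon)^2=2W(\varphi^\varepsilon)$ one has $|\dot m^\varepsilon(t)|^2\le A(t)\int_\Omega\varepsilon(\varphi_t^\varepsilon)^2$, so $\int_0^T|\dot m^\varepsilon|^2\le(\sup_{[0,T]}A)\,C(T)$; combining this with $0\le Q\le M_\gamma A$ and $A\le 4\mathrm e^{2M_\gamma^2T}E(0)$ one derives an estimate for $\int_0^T\tfrac{(P+Q)^2}{A}\,\mathrm dt$ of the announced shape, up to an absorbable multiple of $\int_0^T|\tilde S[\varphi^\varepsilon]|^2\,\mathrm dt$. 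Finally, inserting the $A$-weighted kernel into Duhamel's formula, using $A\ge c_\ast$, Jensen's inequality (the kernel $\alpha A(s)\mathrm e^{-\alpha\int_s^t A}\,\mathrm ds$ is a sub-probability measure), and Fubini yields $\int_0^T|\tilde S[\varphi^\varepsilon]|^2\,\mathrm dt\le \tfrac{1}{c_\ast}\int_0^T\tfrac{(P+Q)^2}{A}\,\mathrm dt$; taking $\varepsilon$ small and $\alpha$ large so that the self-referential term has coefficient $<1$ and absorbing it gives the bound with $C(T)$ as in \eqref{eq:CT}.

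The principal obstacle is the uniform lower bound $A(t)\ge c_\ast$: this is precisely where the hypotheses $\int_\Omega G(\varphi_0^\varepsilon)\ge C_1$ and $1-\tfrac{6}{|\Omega|}\int_\Omega G(\varphi_0^\varepsilon)\ge C_1$ are used — they guarantee that the interface neither collapses nor fills $\Omega$, so that the isoperimetric estimate bites — and one must coordinate the $O(1/\sqrt\alpha)$ smallness of $m^\varepsilon-m^\varepsilon_0$ with the $O(\sqrt\varepsilon)$ transition-layer defect. A secondary difficulty is the closure in the last step: since $P$ contains the relaxation term $A\tilde S[\varphi^\varepsilon]$, the quantity $\int_0^T|\tilde S[\varphi^\varepsilon]|^2$ re-enters on the right-hand side, and one has to track constants to see that, for $\varepsilon$ small and $\alpha$ large, its coefficient is indeed strictly less than one.
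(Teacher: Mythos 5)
Your strategy is genuinely different from the paper's and the central idea — differentiating $m^\varepsilon(t)=\int_\Omega G(\varphi^\varepsilon)\,\mathrm dx$ to obtain a scalar ODE with damping $\alpha A(t)$ where $A(t)=\tfrac{2}{\varepsilon}\int_\Omega W(\varphi^\varepsilon)\,\mathrm dx$, and then exploiting a uniform lower bound $A\ge c_\ast$ — is appealing and the derivation of the ODE, the sign of $Q$, and the sub-probability kernel trick are all correct as sketched. The lower bound $A\ge c_\ast$ via Cauchy--Schwarz, coarea and isoperimetry is also in the right spirit (this is close to Röger--Schätzle type arguments). However, there is a gap in the closure step that I do not think can be repaired along the lines you indicate.

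The problem is the term $P(t)=-\int_\Omega(1-2\varphi^\varepsilon)\bigl(|\nabla\varphi^\varepsilon|^2+\tfrac{W(\varphi^\varepsilon)}{\varepsilon^2}\bigr)\,\mathrm dx$. Estimating $|1-2\varphi^\varepsilon|\le 1$ gives only $|P|\lesssim E_s(t)/\varepsilon$, which blows up, and Theorem~\ref{thm:GM}(I) gives a qualitative (not quantitative) vanishing of the discrepancy, so it cannot beat the $1/\varepsilon$. The only way you offer to control $P$ is through $\dot m^\varepsilon$: from $\dot m^\varepsilon = P+Q - A\tilde S$ and the Cauchy--Schwarz bound $|\dot m^\varepsilon|^2\le A\int_\Omega\varepsilon(\varphi^\varepsilon_t)^2$, one obtains
\[
\int_0^T\frac{(P+Q)^2}{A}\,\mathrm dt\le 2\int_0^T\frac{(\dot m^\varepsilon)^2}{A}\,\mathrm dt + 2\int_0^T A\,\tilde S[\varphi^\varepsilon]^2\,\mathrm dt
\le 2C(T) + 2\bigl(\sup_{[0,T]}A\bigr)\int_0^T\tilde S[\varphi^\varepsilon]^2\,\mathrm dt.
\]
Inserting this into $\int_0^T|\tilde S|^2\,\mathrm dt\le c_\ast^{-1}\int_0^T(P+Q)^2/A\,\mathrm dt$ yields a self-referential inequality with coefficient $2(\sup A)/c_\ast$, which is $O(1)$ and in particular does \emph{not} tend to zero as $\varepsilon\to0$ or $\alpha\to\infty$. (Also, contrary to what you write, $P$ as you define it does not contain $A\tilde S$; the re-entry comes precisely from this substitution.) In fact the ODE identity $\tilde S'+\alpha A\tilde S=\alpha(\dot m^\varepsilon + A\tilde S)$ collapses to the tautology $\tilde S'=\alpha\dot m^\varepsilon$, so one cannot hope to gain by recycling $\dot m^\varepsilon$ through the damping. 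The paper circumvents $P$ entirely: it tests the Allen--Cahn equation against $\nabla\varphi^\varepsilon\cdot\vec\zeta$ with $\vec\zeta=\nabla w$, where $w$ solves a Poisson problem with data $\rho_\delta\ast G(\varphi^\varepsilon)$ minus its average. After integration by parts the coefficient multiplying $\tilde S[\varphi^\varepsilon]$ becomes $\int_\Omega G(\varphi^\varepsilon)\,\mathrm{div}\,\vec\zeta\,\mathrm dx$, which the two hypotheses on $\int_\Omega G(\varphi_0^\varepsilon)$ pin down from below by $C_1^2/12$ for $\varepsilon$ small, $\alpha$ large, and $\delta$ small; the remaining terms are controlled by $E(t)$ and $\int_\Omega\varepsilon(\varphi^\varepsilon_t)^2$, with no $1/\varepsilon$ loss and no re-entry of $\tilde S$. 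This spatial-multiplier route avoids exactly the term that your time-differentiation approach cannot bound.
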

\begin{remark}
    Since
    \begin{equation}
        \frac{6}{|\Omega|} \int _\Omega G(\varphi_0 ^\varepsilon) \,\mathrm{d}x
        \approx \frac{|\Omega_{\mathrm{in}}(\varphi^\varepsilon,0)|}{|\Omega|},
    \end{equation}
    the assumption means that $0<|\Omega_{\mathrm{in}}(\varphi^\varepsilon,0)|<|\Omega|$ holds.
\end{remark}
\begin{proof}
Let $\vec{\zeta} \in C^1 (\Omega;\mathbb{R}^n)$ be a smooth periodic 
vector-valued function.
Multiplying the first equation in \eqref{eq:target_problem} by $\nabla\varphi^\varepsilon\cdot\vec{\zeta}$ and integrating in space, we have
\begin{align}
\int_{\Omega} \varepsilon \varphi_t^\varepsilon\nabla \varphi ^\varepsilon \cdot \vec{\zeta}\,\mathrm{d}x 
&=  \int_\Omega \left( \varepsilon \triangle\varphi^\varepsilon-\frac{W'(\varphi^\varepsilon)}{2\varepsilon}\right) \nabla \varphi ^\varepsilon \cdot \vec{\zeta} \,\mathrm{d}x \\
&\quad +\int _\Omega G'(\varphi^\varepsilon) \gamma(u^\varepsilon) \nabla \varphi ^\varepsilon \cdot \vec{\zeta} \,\mathrm{d}x
-\tilde{S}[\varphi^\varepsilon](t) \int _\Omega G'(\varphi^\varepsilon) 
\nabla \varphi ^\varepsilon \cdot \vec{\zeta}\,\mathrm{d}x. 
\label{eq:IBP}
\end{align}    
The Cauchy--Schwarz inequality yields that
\begin{equation}
\left|
\int_{\Omega} \varepsilon\varphi_t^\varepsilon
\nabla \varphi ^\varepsilon \cdot \vec{\zeta}\,\mathrm{d}x
\right|
\leq \sqrt{2}\|\vec{\zeta}\|_{L^\infty(\Omega)}
\left(\int_\Omega\varepsilon\left(\varphi_t^\varepsilon\right)^2\,\mathrm{d}x\right) ^{\frac12} E(t)^{\frac12}.
\label{eq:estimate_1}
\end{equation}
By performing integration by parts, we have
\begin{align}
    \int_\Omega\left(\varepsilon\triangle\varphi^\varepsilon-\frac{W'(\varphi^\varepsilon)}{2\varepsilon}\right)\nabla\varphi^\varepsilon\cdot\vec{\zeta}\,\mathrm{d}x
    &=\int_\Omega\left[-\varepsilon\nabla\varphi^\varepsilon\cdot\nabla(\nabla\varphi^\varepsilon\cdot\vec{\zeta})+\frac{W(\varphi^\varepsilon)}{2\varepsilon}\Div\vec{\zeta}\right]\,\mathrm{d}x\\
    &=\int_\Omega\left[-\varepsilon\sum_{i,j=1}^n\left(\frac{\partial\varphi^\varepsilon}{\partial x_j}\frac{\partial^2\varphi^\varepsilon}{\partial x_j\partial x_i}\zeta_i+\frac{\partial\varphi^\varepsilon}{\partial x_j}\frac{\partial\varphi^\varepsilon}{\partial x_i}\frac{\partial\zeta_i}{\partial x_j}\right)+\frac{W(\varphi^\varepsilon)}{2\varepsilon}\Div\vec{\zeta}\right]\,\mathrm{d}x.
\end{align}
Performing integration by parts again yields that
\begin{equation}
    -\int_\Omega\frac{\partial\varphi^\varepsilon}{\partial x_j}\frac{\partial^2\varphi^\varepsilon}{\partial x_j\partial x_i}\zeta_i\,\mathrm{d}x
    =\int_\Omega\frac{\partial\varphi^\varepsilon}{\partial x_j}\left(\frac{\partial^2\varphi^\varepsilon}{\partial x_i\partial x_j}\zeta_i+\frac{\partial\varphi^\varepsilon}{\partial x_j}\frac{\partial\zeta_i}{\partial x_i}\right)\,\mathrm{d}x;
\end{equation}
that is,
\begin{equation}
    -\int_\Omega\sum_{i,j=1}^n\frac{\partial\varphi^\varepsilon}{\partial x_j}\frac{\partial^2\varphi^\varepsilon}{\partial x_j\partial x_i}\zeta_i\,\mathrm{d}x
    =\frac{1}{2}\int_\Omega\sum_{j=1}^n\left(\frac{\partial\varphi^\varepsilon}{\partial x_j}\right)^2\sum_{i=1}^n\frac{\partial\zeta_i}{\partial x_i}\,\mathrm{d}x
    =\frac{1}{2}\int_\Omega|\nabla\varphi^\varepsilon|^2\Div\vec{\zeta}\,\mathrm{d}x.
\end{equation}
Therefore, we obtain
\begin{align}
\int_\Omega \left( \varepsilon \triangle\varphi^\varepsilon-\frac{W'(\varphi^\varepsilon)}{2\varepsilon}\right) \nabla \varphi ^\varepsilon \cdot \vec{\zeta} \,\mathrm{d}x
&=\int_\Omega \left( \frac{\varepsilon|\nabla\varphi ^\varepsilon|^2}{2}+\frac{W(\varphi ^\varepsilon)}{2\varepsilon} \right) \mathrm{div} \, \vec{\zeta}
\,\mathrm{d}x - \int_\Omega \varepsilon \sum_{i,j=1} ^{n} \frac{\partial \varphi ^\varepsilon}{\partial x_i} \frac{\partial \varphi ^\varepsilon}{\partial x_j} \frac{\partial \zeta_i}{\partial x_j} \,\mathrm{d}x.
\end{align}
Then, the Cauchy--Schwarz inequality leads to
\begin{equation}
\left|\int_\Omega \left( \varepsilon \triangle\varphi^\varepsilon-\frac{W'(\varphi^\varepsilon)}{2\varepsilon}\right) \nabla \varphi ^\varepsilon \cdot \vec{\zeta} \,\mathrm{d}x
\right|
\leq C \| \nabla \vec{\zeta} \|_{L^\infty(\Omega)} E (t),
\label{eq:estimate_2}
\end{equation}
where $C>0$ depends only on $n$.
Furthermore, we have
\begin{equation}
\left|\int _\Omega G'(\varphi^\varepsilon) \gamma(u^\varepsilon) \nabla \varphi ^\varepsilon \cdot \vec{\zeta} \,\mathrm{d}x
\right|
\leq 2 \|\vec{\zeta}\|_{L^\infty(\Omega)}
M_\gamma E(t),
\label{eq:estimate_3}
\end{equation}
where we used $G'(\varphi ^\varepsilon ) |\nabla \varphi ^\varepsilon|
\leq \frac{\varepsilon|\nabla\varphi ^\varepsilon|^2}{2}+\frac{W(\varphi ^\varepsilon)}{\varepsilon}$ which follows from the Young's inequality.

Next, we show that for each $t\in(0,T)$, there exists $\vec{\zeta}\in C^1(\Omega;\mathbb{R}^n)$ such that
\begin{equation}
-\int _\Omega G'(\varphi^\varepsilon) 
\nabla \varphi ^\varepsilon \cdot \vec{\zeta}\,\mathrm{d}x
=\int _\Omega G(\varphi^\varepsilon) 
\Div\vec{\zeta}\,\mathrm{d}x
\geq\frac{C_1^2}{12},
\qquad t \in [0,T).
\label{eq:estimate_G-div_below}
\end{equation}
Let $\delta>0$ and $\rho_\delta$ be the standard mollifier.
Namely, let $\rho\in C_0^\infty(B_1(0))$ be a smooth non-negative function with $\int_{B_1(0)}\rho(x)\,\mathrm{d}x=1$, and define $\rho_\delta$ by $\rho_\delta(x)\coloneqq\delta^{-n}\rho(x/\delta)$.
Let
$w \in C^{2,\beta} (\Omega)$ be a periodic classical solution to
\begin{equation}
\begin{dcases*}
\Delta w = \rho_\delta\ast G(\varphi ^\varepsilon) 
- \frac{1}{|\Omega|} \int _\Omega \rho_\delta\ast G(\varphi ^\varepsilon) \,\mathrm{d}x&in $\Omega$,\\
\int_\Omega w \, \mathrm{d}x =0.
\end{dcases*}
\label{eq:w}
\end{equation}
Similar to the case of Neumann boundary condition, 
the solution exists and is unique. Set $\vec{\zeta}=\nabla w$.
Then we have 
\begin{align}
    \|\vec{\zeta}\|_{L^\infty(\Omega)},\|\nabla \vec{\zeta} \|_{L^\infty(\Omega)} \leq \|w\|_{C^{2,\beta}(\Omega)} \leq C (\delta),
\end{align}
where $C(\delta)$ depends only on $\delta$.
It follows from \eqref{eq:w} that
\begin{align}
    \int _\Omega G(\varphi^\varepsilon) 
\mathrm{div}\, \vec{\zeta}\,\mathrm{d}x
&=\int _\Omega G(\varphi^\varepsilon) 
\left(\rho_\delta\ast G(\varphi ^\varepsilon) 
- \frac{1}{|\Omega|} \int _\Omega \rho_\delta\ast G(\varphi ^\varepsilon) \,\mathrm{d}y\right)
\,\mathrm{d}x\\
&=
\int _\Omega G(\varphi^\varepsilon) ^2 \, \mathrm{d}x
+
\int _\Omega (\rho_\delta\ast G(\varphi^\varepsilon) -G(\varphi^\varepsilon))G(\varphi^\varepsilon)  \, \mathrm{d}x\\
&\quad- \frac{1}{|\Omega|} \int _\Omega \rho_\delta\ast G(\varphi ^\varepsilon) \, \mathrm{d}x
\int _\Omega G(\varphi^\varepsilon)  \, \mathrm{d}x.
\end{align}
We estimate each term below.
To estimate the first term, set
\begin{equation}
    \tilde{G}(\varphi)\coloneqq6G(\varphi)=3\varphi^2-2\varphi^3.
\end{equation}
A direct calculation yields that
\begin{equation}
    \tilde{G}(\varphi)^2-\tilde{G}(\varphi)=(-6-8\varphi+8\varphi^2)W(\varphi).
\end{equation}
Then, we have
\begin{align}
    \left|\int_\Omega\tilde{G}(\varphi^\varepsilon)^2-\int_\Omega\tilde{G}(\varphi^{\varepsilon})\,\mathrm{d}x\right|
    &=\left|\int_\Omega\left(-6+8\varphi^\varepsilon+8(\varphi^\varepsilon)^2\right)W(\varphi^\varepsilon)\,\mathrm{d}x\right|\\
    &\le22\int_\Omega\frac{W(\varphi^\varepsilon)}{2\varepsilon}\,\mathrm{d}x\cdot2\varepsilon
    \le44E(t)\varepsilon.
\end{align}
Therefore, we obtain
\begin{align}
    \int_\Omega G(\varphi^\varepsilon)^2\,\mathrm{d}x
    &=\frac{1}{36}\int_\Omega\tilde{G}(\varphi^\varepsilon)^2\,\mathrm{d}x
    \ge\frac{1}{36}\left(\int_\Omega\tilde{G}(\varphi^\varepsilon)\,\mathrm{d}x-44E(t)\varepsilon\right)\\
    &=\frac{1}{6}\int_\Omega G(\varphi^\varepsilon)\,\mathrm{d}x-\frac{11}{9}E(t)\varepsilon.
\end{align}
Note that
\begin{equation}
    \left|\int_K\left(\rho_\delta\ast v-v\right)\,\mathrm{d}x\right|\le\delta\|\nabla v\|(U)
\end{equation}
holds for all $\delta\in(0,\dist(K,\partial U))$, where $U\subset\mathbb{R}^d$ is an open set, $K\subset U$ is a compact set, and $u\in BV(K)$ \cite[Lemma~3.24]{ambrosio2000functions}.
In particular, for $v\in C^1(U)$, it holds that
\begin{equation}
    \int_K\left|\rho_\delta\ast v-v\right|\,\mathrm{d}x\le\delta\int_U|\nabla v|\,\mathrm{d}x.
\end{equation}
Therefore, combining with the maximum principle (Theorem~\ref{thm:MP}), we obtain
\begin{equation}
    \int_\Omega|\rho_\delta\ast G(\varphi^\varepsilon)-G(\varphi^\varepsilon)|\,\mathrm{d}x
    \le\delta\int_\Omega|\nabla(G(\varphi^\varepsilon))|\,\mathrm{d}x
    \le2\delta E(t)
    \label{eq:mollifier_estimate}
\end{equation}
for sufficiently small $\delta$, where we used $G'(\varphi ^\varepsilon ) |\nabla \varphi ^\varepsilon|
\leq \frac{\varepsilon|\nabla\varphi ^\varepsilon|^2}{2}+\frac{W(\varphi ^\varepsilon)}{\varepsilon}$ which follows from the Young's inequality.
Thus, we have
\begin{equation}
    \left|\int_\Omega\left(\rho_\delta\ast G(\varphi^\varepsilon)-G(\varphi^\varepsilon)\right)G(\varphi^\varepsilon)\,\mathrm{d}x\right|
    \le\frac{1}{3}\delta E(t).
\end{equation}
Similarly, using \eqref{eq:mollifier_estimate}, the third term can be bounded from below as
\begin{align}
&-\frac{1}{|\Omega|} \int _\Omega  \rho_\delta\ast G(\varphi ^\varepsilon) \,\mathrm{d}x
\int _\Omega G(\varphi^\varepsilon)  \, \mathrm{d}x\\
&=
-\frac{1}{|\Omega|} \left(\int _\Omega  G(\varphi ^\varepsilon) \,\mathrm{d}x\right)^2
-\frac{1}{|\Omega|} \int _\Omega (\rho_\delta\ast G(\varphi ^\varepsilon)-
G(\varphi ^\varepsilon))\,\mathrm{d}x
\int _\Omega G(\varphi^\varepsilon)  \,\mathrm{d}x \\
&\geq  -\frac{1}{|\Omega|} \left(\int _\Omega  G(\varphi ^\varepsilon) \,\mathrm{d}x\right)^2
- \frac{1}{3}\delta E(t).
\end{align}
Hence, using the assumption and \eqref{eq:E_bound}, we have
\begin{align}
    &\int_\Omega G(\varphi^\varepsilon)\Div\vec{\zeta}\,\mathrm{d}x\\
    &\ge\frac{1}{6}\int_\Omega G(\varphi^\varepsilon)\,\mathrm{d}x-\frac{11}{9}E(t)\varepsilon-\frac{1}{3}\delta E(t)-\frac{1}{|\Omega|}\left(\int_\Omega G(\varphi^\varepsilon)\,\mathrm{d}x\right)^2-\frac{1}{3}\delta E(t)\\
    &=\frac{1}{6}\int_\Omega G(\varphi^\varepsilon)\,\mathrm{d}x\left(1-\frac{6}{|\Omega|}\int_\Omega G(\varphi^\varepsilon)\,\mathrm{d}x\right)-\frac{1}{3}\left(\frac{11}{3}\varepsilon+2\delta\right)E(t)\\
    &=\frac{1}{6}\int_\Omega G(\varphi^\varepsilon)\,\mathrm{d}x\left(1-\frac{6}{|\Omega|}\int_\Omega G(\varphi_0^\varepsilon)\,\mathrm{d}x+\frac{6}{|\Omega|}\int_\Omega\left(G(\varphi_0^\varepsilon)-G(\varphi^\varepsilon)\right)\,\mathrm{d}x\right)\\
    &\quad-\frac{1}{3}\left(\frac{11}{3}\varepsilon+2\delta\right)E(t)
   \\
    &\ge\frac{1}{6}\int_\Omega G(\varphi^\varepsilon)\,\mathrm{d}x\left(C_1-\frac{6}{|\Omega|}\sqrt{\frac{2}{\alpha}}\mathrm{e}^{M_\gamma^2T}E(0)^{1/2}\right)-\frac{1}{3}\left(\frac{11}{3}\varepsilon+2\delta\right)\mathrm{e}^{2M_\gamma^2T}E(0)\\
    &=\frac{1}{6}\left(C_1-\sqrt{\frac{2}{\alpha}}\mathrm{e}^{M_\gamma^2T}E(0)^{1/2}\right)\left(C_1-\frac{6}{|\Omega|}\sqrt{\frac{2}{\alpha}}\mathrm{e}^{M_\gamma^2T}E(0)^{1/2}\right)\\
    &\quad-\frac{1}{3}\left(\frac{11}{3}\varepsilon+2\delta\right)\mathrm{e}^{2M_\gamma^2T}E(0)\\
    &=\frac{1}{6}\left(C_1^2+\frac{12}{|\Omega|\alpha}\mathrm{e}^{2M_\gamma^2T}E(0)\right)\\
    &\quad-\left[C_1\left(\frac{6}{|\Omega|}+1\right)\sqrt{\frac{2}{\alpha}}+\frac{1}{3}\left(\frac{11}{3}\varepsilon+2\delta\right)\mathrm{e}^{M_\gamma^2T}E(0)^{1/2}\right]\mathrm{e}^{M_\gamma^2T}E(0)^{1/2},
\end{align}
where we have used the assumption and the energy estimate~\eqref{eq:E_bound}.
Therefore, for sufficiently small $\delta$, $\varepsilon$ and large $\alpha$, we have
\[
\int _\Omega G(\varphi^\varepsilon) 
\mathrm{div}\, \vec{\zeta}\, \mathrm{d}x
\geq\frac{C_1^2}{12}>0,
\]
which completes the proof of \eqref{eq:estimate_G-div_below}.

Thus, combining \eqref{eq:IBP} with estimates \eqref{eq:estimate_1}, \eqref{eq:estimate_2}, \eqref{eq:estimate_3}, and \eqref{eq:estimate_G-div_below} and using the Young's inequality, we obtain
\begin{align*}
&|\tilde{S}[\varphi^\varepsilon](t)|\\
&\leq
\frac{12}{C_1^2}
\left(\sqrt{2}
\|\vec{\zeta}\|_{L^\infty(\Omega)}
\left(\int_\Omega\varepsilon\left(\varphi_t^\varepsilon\right)^2\,\mathrm{d}x\right) ^{\frac12} E(t)^{\frac12}
+
C \| \nabla \vec{\zeta} \|_{L^\infty(\Omega)} E (t)
+
2 \|\vec{\zeta}\|_{L^\infty(\Omega)}
M_\gamma E(t)
\right)\\
&\le\frac{12}{C_1^2}
\| \vec{\zeta} \|_{C^1 (\Omega)} \left(1 + \mathrm{e}^{2M_\gamma^2t}E(0)\right)
\left(\frac{1}{\sqrt{2}}
\left(\int_\Omega\varepsilon\left(\varphi_t^\varepsilon\right)^2\,\mathrm{d}x\right) ^{\frac12}
+ C + 2M_\gamma
\right).
\end{align*}
Hence, we conclude that
\begin{equation}
\begin{split}
&\int_0 ^T 
|\tilde{S}[\varphi^\varepsilon](t)|^2 \, \mathrm{d}t\\
&\leq\frac{6}{C_1^2}\| \vec{\zeta} \|_{C^1 (\Omega)} \left(1 + \mathrm{e}^{2M_\gamma^2T}E(0)\right)
\left[
(C+2M_\gamma)^2T +\frac{1}{2}\int _0 ^T \int_\Omega\varepsilon\left(\frac{\partial\varphi^\varepsilon}{\partial t}\right)^2\,\mathrm{d}x\,\mathrm{d}t
\right] \\
&\leq C_2\| \vec{\zeta} \|_{C^1 (\Omega)} \left(1 + \mathrm{e}^{2M_\gamma^2T}E(0)\right)
\left( 
(1+M_\gamma ^2)T + C(T)
\right),
\end{split}
\end{equation}
where the first and second inequalities follow from the Young's inequality and Lemma~\ref{lem:estimate}(I), respectively.
\end{proof}

Define $\vec{v}^\varepsilon$ by
\begin{equation}
    \vec{v}^\varepsilon
    \coloneqq\begin{dcases*}
        \frac{-\varphi_t^\varepsilon}{|\nabla\varphi^\varepsilon|}\cdot\frac{\nabla\varphi^\varepsilon}{|\nabla\varphi^\varepsilon|}&if $|\nabla\varphi^\varepsilon|\neq0$,\\
        \vec{0}&otherwise
    \end{dcases*}
\end{equation}
and set
\begin{equation}
    \mathrm{d}\tilde{\mu}_t^\varepsilon
    \coloneqq\varepsilon|\nabla\varphi^\varepsilon|^2\,\mathrm{d}x.
\end{equation}

\begin{lemma}
    \label{claim:convergence}
    Let $\{\varepsilon_i\}_{i=1}^\infty$ be a sequence as in Theorem~\ref{thm:GM}.
    Then, the following hold.
    \begin{enumerate}
        \item $\displaystyle\sup_{\varepsilon>0}\int_0^T\int_\Omega|\vec{v}^\varepsilon|^2\,\mathrm{d}\tilde{\mu}_t^\varepsilon\,\mathrm{d}t<\infty$.
        \item $\displaystyle\mathrm{d}\tilde{\mu}_t^{\varepsilon_i}\,\mathrm{d}t\rightharpoonup\,\mathrm{d}\mu_t\,\mathrm{d}t$ holds as $i\to\infty$; that is,
        \begin{equation}
            \int_0^T\int_\Omega\eta\,\mathrm{d}\tilde{\mu}_t^{\varepsilon_i}\,\mathrm{d}t
            \longrightarrow\int_{\Omega_T}\eta\,\mathrm{d}\mu_t\,\mathrm{d}t\qquad
            \text{for any}\ \eta\in C_0(\Omega\times(0,T)).
        \end{equation}
        \item There exist some subsequence of $\{\varepsilon_i\}_{i=1}^\infty$, denoted by same symbols, and $\vec{v}\in(L^2(\mu;\Omega\times(0,T)))^n$ such that
        \begin{equation}
            \int_0^T\int_\Omega\vec{v}^{\varepsilon_i}\cdot\vec{\eta}\,\mathrm{d}\tilde{\mu}_t^{\varepsilon_i}\,\mathrm{d}t
            \longrightarrow\int_0^T\int_\Omega\vec{v}\cdot\vec{\eta}\,\mathrm{d}\mu_t\,\mathrm{d}t\qquad\text{for any}\ \vec{\eta}\in C_0(\Omega\times(0,T);\mathbb{R}^n).
        \end{equation}
    \end{enumerate}
\end{lemma}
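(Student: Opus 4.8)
The plan is to prove the three items in sequence, each one leaning on the energy and discrepancy estimates already in hand. For \textbf{(I)}, I would just unwind the definitions: on $\{|\nabla\varphi^\varepsilon|\neq0\}$ one has $|\vec{v}^\varepsilon|^2=(\varphi_t^\varepsilon)^2/|\nabla\varphi^\varepsilon|^2$, so $|\vec{v}^\varepsilon|^2\,\mathrm{d}\tilde{\mu}_t^\varepsilon=\varepsilon(\varphi_t^\varepsilon)^2\chi_{\{|\nabla\varphi^\varepsilon|\neq0\}}\,\mathrm{d}x\le\varepsilon(\varphi_t^\varepsilon)^2\,\mathrm{d}x$; integrating over $\Omega\times(0,T)$ and quoting $\int_0^T\int_\Omega\varepsilon(\varphi_t^\varepsilon)^2\,\mathrm{d}x\,\mathrm{d}t\le C(T)$ from Lemma~\ref{lem:estimate}(I) closes this part.

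For \textbf{(II)} I would split $\mathrm{d}\tilde{\mu}_t^\varepsilon=\mathrm{d}\mu_t^\varepsilon+\xi^\varepsilon\,\mathrm{d}x$, where $\xi^\varepsilon\coloneqq\frac{\varepsilon|\nabla\varphi^\varepsilon|^2}{2}-\frac{W(\varphi^\varepsilon)}{2\varepsilon}$ is the discrepancy density. Against $\eta\in C_0(\Omega\times(0,T))$, the $\xi^{\varepsilon_i}$ contribution is bounded by $\|\eta\|_{C^0}\int_0^T\int_\Omega|\xi^{\varepsilon_i}|\,\mathrm{d}x\,\mathrm{d}t\to0$ as $i\to\infty$ by Theorem~\ref{thm:GM}(I). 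For the $\mu_t^{\varepsilon_i}$ contribution I would combine the pointwise-in-$t$ convergence $\int_\Omega\eta(\cdot,t)\,\mathrm{d}\mu_t^{\varepsilon_i}\to\int_\Omega\eta(\cdot,t)\,\mathrm{d}\mu_t$ from Theorem~\ref{thm:GM}(I) with the uniform domination $|\int_\Omega\eta(\cdot,t)\,\mathrm{d}\mu_t^{\varepsilon_i}|\le\|\eta\|_{C^0}E_s(t)$, which is bounded uniformly in $i$ and $t\in(0,T)$ by Lemma~\ref{lem:estimate}, and conclude by dominated convergence in the time variable.

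For \textbf{(III)} I would regard $\lambda^{\varepsilon_i}\coloneqq\vec{v}^{\varepsilon_i}\,\mathrm{d}\tilde{\mu}_t^{\varepsilon_i}\,\mathrm{d}t$ as $\mathbb{R}^n$-valued Radon measures on $\Omega\times(0,T)$. Cauchy--Schwarz, together with (I) and the mass bound $\tilde{\mu}_t^{\varepsilon}(\Omega)\le2E_s(t)$, gives $|\lambda^{\varepsilon_i}|(\Omega\times(0,T))\le C$ uniformly in $i$, so after passing to a further subsequence $\lambda^{\varepsilon_i}\rightharpoonup\lambda$ for some vector Radon measure $\lambda$. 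The real content is to show $\lambda=\vec{v}\,\mu_t\,\mathrm{d}t$ with $\vec{v}$ square integrable: I would test the inequality $|\int_0^T\!\int_\Omega\vec{v}^{\varepsilon_i}\cdot\vec{\eta}\,\mathrm{d}\tilde{\mu}_t^{\varepsilon_i}\,\mathrm{d}t|\le(\int_0^T\!\int_\Omega|\vec{v}^{\varepsilon_i}|^2\,\mathrm{d}\tilde{\mu}_t^{\varepsilon_i}\,\mathrm{d}t)^{1/2}(\int_0^T\!\int_\Omega|\vec{\eta}|^2\,\mathrm{d}\tilde{\mu}_t^{\varepsilon_i}\,\mathrm{d}t)^{1/2}$ against $\vec{\eta}\in C_0(\Omega\times(0,T);\mathbb{R}^n)$, let $i\to\infty$ using (I) and using (II) for the scalar function $|\vec{\eta}|^2\in C_0$, thereby obtaining $|\int\vec{\eta}\cdot\,\mathrm{d}\lambda|\le\sqrt{C(T)}\,\|\vec{\eta}\|_{L^2(\mu)}$ with $\mathrm{d}\mu\coloneqq\mathrm{d}\mu_t\,\mathrm{d}t$. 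Since $C_0$ is dense in $L^2(\mu)$, this bounded linear functional extends to $(L^2(\mu))^n$, and Riesz representation produces $\vec{v}\in(L^2(\mu;\Omega\times(0,T)))^n$ with $\mathrm{d}\lambda=\vec{v}\,\mathrm{d}\mu$, which is precisely the asserted convergence.

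The main obstacle is the final identification in (III): converting weak-$*$ convergence of the vector measures into the statement that the limit is absolutely continuous with respect to $\mu$ with an $L^2(\mu)$ density. This is the familiar compactness-and-lower-semicontinuity step for $L^2$-flows (cf.\ \cite{mugnai2008allen}); its delicate point is the simultaneous use, inside the Cauchy--Schwarz estimate, of the uniform $L^2$ bound from (I) and the convergence of the reference measures from (II), so that the limiting functional is $L^2(\mu)$-bounded. Parts (I) and (II) are, by contrast, routine consequences of the estimates already established.
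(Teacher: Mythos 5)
Your arguments for (I) and (II) coincide with the paper's: (I) is the same chain of inequalities terminating in Lemma~\ref{lem:estimate}(I), and (II) is the same decomposition $\tilde{\mu}_t^{\varepsilon}=\mu_t^{\varepsilon}+\xi^{\varepsilon}\,\mathrm{d}x$ with the discrepancy killed by Theorem~\ref{thm:GM}(I) and the remaining term handled by dominated convergence in $t$ (the paper leaves this last step implicit; you correctly supply the uniform-in-$t$ mass bound from Lemma~\ref{lem:estimate}(II)). For (III), the paper does not argue at all and simply refers to \cite{hutchinson1986second}; the argument you write out --- uniform total-variation bound via Cauchy--Schwarz and (I), weak-$*$ subsequential limit $\lambda$, then passing to the limit in the Cauchy--Schwarz estimate using (II) for $|\vec{\eta}|^{2}$ to obtain $|\int\vec{\eta}\cdot\mathrm{d}\lambda|\le\sqrt{C(T)}\,\|\vec{\eta}\|_{L^{2}(\mu)}$, followed by density of $C_0$ and Riesz representation to get $\mathrm{d}\lambda=\vec{v}\,\mathrm{d}\mu$ with $\vec{v}\in(L^{2}(\mu))^{n}$ --- is exactly the standard compactness-and-lower-semicontinuity proof that the citation is pointing to, so you are reconstructing the omitted argument rather than taking a different route, and it is correct.
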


Note that the vector $\vec{v}$ in (III) of Lemma~\ref{claim:convergence} is nothing but the one in Theorem~\ref{thm:GM} (III-b).
The statement in (III) follows from the compactness of $\{(\mu^\varepsilon,\vec{v}^\varepsilon)\}_{\varepsilon}$.

\begin{proof}
    (I)
    Since the definition of $\vec{v}^\varepsilon$ and Lemma~\ref{lem:estimate} (I) imply that
    \begin{align}
        \int_0^T\int_\Omega|\vec{v}^\varepsilon|^2\,\mathrm{d}\tilde{\mu}_t^\varepsilon\,\mathrm{d}t
        &=\int_0^T\int_{\{x\in\Omega\mid|\nabla\varphi^\varepsilon(x,t)|\neq0\}}\frac{(\varphi_t^\varepsilon)^2}{|\nabla\varphi^\varepsilon|^2}\varepsilon|\nabla\varphi^\varepsilon|^2\,\mathrm{d}x\,\mathrm{d}t\\
        &\le\int_0^T\int_\Omega\varepsilon(\varphi_t^\varepsilon)^2\,\mathrm{d}x\,\mathrm{d}t
        \le C(T),
    \end{align}
    we obtain the desired estimate.

    (II)
    Since it holds that
    \begin{align}
        \int_0^T\int_\Omega\eta\,\mathrm{d}\tilde{\mu}_t^{\varepsilon_i}\,\mathrm{d}t
        &=\int_0^T\int_\Omega\eta\left(\frac{\varepsilon_i|\nabla\varphi^{\varepsilon_i}|^2}{2}+\frac{W(\varphi^{\varepsilon_i})}{2\varepsilon_i}+\frac{\varepsilon_i|\nabla\varphi^{\varepsilon_i}|^2}{2}-\frac{W(\varphi^{\varepsilon_i})}{2\varepsilon_i}\right)\,\mathrm{d}x\,\mathrm{d}t\\
        &=\int_0^T\int_\Omega\eta\,\mathrm{d}\mu_t^{\varepsilon_i}\,\mathrm{d}t+\int_0^T\int_\Omega\eta\left(\frac{\varepsilon_i|\nabla\varphi^{\varepsilon_i}|^2}{2}-\frac{W(\varphi^{\varepsilon_i})}{2\varepsilon_i}\right)\,\mathrm{d}x\,\mathrm{d}t.
    \end{align}
    Theorem~\ref{thm:GM} (I) yields the statement.

    Concerning (III), see \cite{hutchinson1986second}.
\end{proof}

\begin{theorem}
    Let $n=2,3$.
    There exists a sequence $\{\varepsilon_i\}_{i=1}^\infty$ such that the limits $\varphi\coloneqq\lim_{i\to\infty}\varphi^{\varepsilon_i}$ and $u\coloneqq\lim_{i\to\infty}u^{\varepsilon_i}$ exist, and the generalized velocity vector $\vec{v}$ is given by
    \begin{equation}
        \vec{v}
    =\vec{h}-\sqrt{2}\tilde{S}[\varphi]\left(\frac{\mathrm{d}\mathcal{H}^{n-1}|_{\partial^\ast\Omega_{\mathrm{in}}(\varphi,t)}}{\mathrm{d}\nu_t}\right)\vec{\nu}+\sqrt{2}\gamma(u)\left(
        \frac{\mathrm{d}\mathcal{H}^{n-1}|_{\partial^\ast\Omega_{\mathrm{in}}(\varphi,t)}}{\mathrm{d}\nu_t}
    \right)\vec{\nu};
    \end{equation}
    that is,
\begin{align*}
    &\int_0^T\int_\Omega\vec{v}\cdot\vec{\eta}\,\mathrm{d}\nu_t\,\mathrm{d}t\\
    &=\int_0^T\int_\Omega\vec{h}\cdot\vec{\eta}\,\mathrm{d}\nu_t\,\mathrm{d}t\\
    &\quad+\int_0^T\int_\Omega\left[-\sqrt{2}\tilde{S}[\varphi]\left(\frac{\mathrm{d}\mathcal{H}^{n-1}|_{\partial^\ast\Omega_{\mathrm{in}}(\varphi,t)}}{\mathrm{d}\nu_t}\right)\vec{\nu}+\sqrt{2}\gamma(u)\left(
        \frac{\mathrm{d}\mathcal{H}^{n-1}|_{\partial^\ast\Omega_{\mathrm{in}}(\varphi,t)}}{\mathrm{d}\nu_t}
    \right)\vec{\nu}\right]\cdot\vec{\eta}\,\mathrm{d}\nu_t\,\mathrm{d}t
\end{align*}
holds for any $\vec{\eta}\in(C_0^1(\Omega\times(0,T)))^n$, where the measure $\nu_t$ is defined by \eqref{eq:nu} and $\vec{\nu}$ is the outer unit normal vector of $\partial ^\ast \Omega_{\mathrm{in}} (\varphi,t)$.
Furthermore, $u$ is a weak solution to \eqref{eq:u_limit}.
\end{theorem}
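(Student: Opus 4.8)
The plan is to test the first equation in \eqref{eq:target_problem} against $\nabla\varphi^\varepsilon\cdot\vec\eta$ and to pass to the limit term by term. First I would fix a subsequence $\{\varepsilon_i\}$ (a diagonal subsequence of the ones appearing in Lemma~\ref{lem:u_converge}, Theorem~\ref{thm:GM}, Lemma~\ref{lem:phi_converge} and Lemma~\ref{claim:convergence}) along which all the convergences of those statements hold simultaneously, and abbreviate it by $\varepsilon$. Fix $\vec\eta\in(C_0^1(\Omega\times(0,T)))^n$. Since $\tau=\sigma=1$, dividing the first equation by $\varepsilon$, multiplying by $\nabla\varphi^\varepsilon\cdot\vec\eta$, integrating over $\Omega\times(0,T)$, and using the identities $G'(\varphi^\varepsilon)\nabla\varphi^\varepsilon=\nabla\bigl(G(\varphi^\varepsilon)\bigr)$ and $\varphi_t^\varepsilon\nabla\varphi^\varepsilon=-|\nabla\varphi^\varepsilon|^2\vec v^\varepsilon$ (the latter on $\{|\nabla\varphi^\varepsilon|\neq0\}$, both sides vanishing elsewhere), one arrives at
\begin{align*}
-\int_0^T\!\!\int_\Omega\vec v^\varepsilon\cdot\vec\eta\,\mathrm{d}\tilde\mu_t^\varepsilon\,\mathrm{d}t
&=\int_0^T\!\!\int_\Omega\Bigl(\varepsilon\triangle\varphi^\varepsilon-\frac{W'(\varphi^\varepsilon)}{2\varepsilon}\Bigr)\nabla\varphi^\varepsilon\cdot\vec\eta\,\mathrm{d}x\,\mathrm{d}t\\
&\quad+\int_0^T\!\!\int_\Omega\gamma(u^\varepsilon)\,\nabla\bigl(G(\varphi^\varepsilon)\bigr)\cdot\vec\eta\,\mathrm{d}x\,\mathrm{d}t\\
&\quad-\int_0^T\tilde S[\varphi^\varepsilon](t)\!\int_\Omega\nabla\bigl(G(\varphi^\varepsilon)\bigr)\cdot\vec\eta\,\mathrm{d}x\,\mathrm{d}t
=:I_\varepsilon+I\!I_\varepsilon-I\!I\!I_\varepsilon.
\end{align*}
By Lemma~\ref{claim:convergence}(III) the left-hand side converges to $-\int_0^T\int_\Omega\vec v\cdot\vec\eta\,\mathrm{d}\mu_t\,\mathrm{d}t$.

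For $I_\varepsilon$ I would integrate by parts exactly as in the proof of the non-local $L^2$-estimate above, rewriting it as $\int_0^T\bigl[\int_\Omega\Div\vec\eta\,\mathrm{d}\mu_t^\varepsilon-\int_\Omega\varepsilon\sum_{i,j}\partial_i\varphi^\varepsilon\partial_j\varphi^\varepsilon\,\partial_j\eta_i\,\mathrm{d}x\bigr]\,\mathrm{d}t$, i.e.\ the time-integrated first variation of the diffuse surface measure. Using the vanishing of the discrepancy (Theorem~\ref{thm:GM}(I)), the weak-$\ast$ convergences $\mu_t^\varepsilon\,\mathrm{d}t\rightharpoonup\mu_t\,\mathrm{d}t$ and $\tilde\mu_t^\varepsilon\,\mathrm{d}t\rightharpoonup\mu_t\,\mathrm{d}t$ (Lemma~\ref{claim:convergence}(II) and Theorem~\ref{thm:GM}(I)), the rectifiability of $\mu_t$ (Theorem~\ref{thm:GM}(II)), and the varifold-convergence machinery behind Theorem~\ref{thm:GM} (see \cite{mugnai2008allen,mugnai2011convergence,hutchinson1986second}), one gets $I_\varepsilon\to\int_0^T\int_\Omega\Div_{T_x\mu_t}\vec\eta\,\mathrm{d}\mu_t\,\mathrm{d}t=-\int_0^T\int_\Omega\vec h\cdot\vec\eta\,\mathrm{d}\mu_t\,\mathrm{d}t$ by the defining property of the generalized mean curvature in Theorem~\ref{thm:GM}(III). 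I expect this passage to the limit to be the main analytic point; it is, however, imported essentially unchanged from the cited diffuse-interface theory rather than reproved here.

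The two forcing terms are handled by soft compactness. In $I\!I_\varepsilon$ I would integrate by parts in space, $\int_\Omega\gamma(u^\varepsilon)\nabla(G(\varphi^\varepsilon))\cdot\vec\eta\,\mathrm{d}x=-\int_\Omega G(\varphi^\varepsilon)\bigl(\gamma(u^\varepsilon)\Div\vec\eta+\gamma'(u^\varepsilon)\nabla u^\varepsilon\cdot\vec\eta\bigr)\,\mathrm{d}x$, and let $\varepsilon\to0$ using that $G(\varphi^\varepsilon)\to\tfrac16\varphi=\tfrac16\chi_{\Omega_{\mathrm{in}}(\varphi,t)}$ strongly in every $L^p(\Omega\times(0,T))$, $p<\infty$ (it is bounded by $\tfrac16$ and converges in $L^1$ by Lemma~\ref{lem:phi_converge}), while $u^\varepsilon\to u$ a.e.\ and $\nabla u^\varepsilon\rightharpoonup\nabla u$ weakly in $L^2$ by Lemma~\ref{lem:u_converge}; the only mildly technical point is the boundedness of $\gamma'$, which holds for the $\gamma$ under consideration. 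This yields the limit $-\tfrac16\int_0^T\int_\Omega\chi_{\Omega_{\mathrm{in}}(\varphi,t)}\Div\bigl(\gamma(u)\vec\eta\bigr)\,\mathrm{d}x\,\mathrm{d}t$, and since, for a.e.\ $t$, $\chi_{\Omega_{\mathrm{in}}(\varphi,t)}\in BV(\Omega)$ with $D\chi_{\Omega_{\mathrm{in}}(\varphi,t)}=-\vec\nu\,\mathcal{H}^{n-1}|_{\partial^\ast\Omega_{\mathrm{in}}(\varphi,t)}$ (De~Giorgi's structure theorem), integrating by parts in $\Omega$ gives $I\!I_\varepsilon\to-\tfrac16\int_0^T\int_\Omega\gamma(u)\,\vec\nu\cdot\vec\eta\,\mathrm{d}\mathcal{H}^{n-1}|_{\partial^\ast\Omega_{\mathrm{in}}(\varphi,t)}\,\mathrm{d}t$. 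For $I\!I\!I_\varepsilon$, write it as $\int_0^T\tilde S[\varphi^\varepsilon](t)\,g^\varepsilon(t)\,\mathrm{d}t$ with $g^\varepsilon(t)\coloneqq-\int_\Omega G(\varphi^\varepsilon)\Div\vec\eta\,\mathrm{d}x$; because $6G(\varphi^\varepsilon(\cdot,t))\to\varphi(\cdot,t)$ in $L^1(\Omega)$ for \emph{every} $t\ge0$ (including $t=0$), one has $g^\varepsilon(t)\to-\tfrac16\int_\Omega\vec\nu\cdot\vec\eta\,\mathrm{d}\mathcal{H}^{n-1}|_{\partial^\ast\Omega_{\mathrm{in}}(\varphi,t)}$ and $\tilde S[\varphi^\varepsilon](t)\to\tilde S[\varphi](t)\coloneqq\tfrac{\alpha}{6}\bigl(|\Omega_{\mathrm{in}}(\varphi,t)|-|\Omega_{\mathrm{in}}(\varphi,0)|\bigr)$ for each $t$, both sequences being uniformly bounded in $\varepsilon$ and $t$ (by $\tfrac16|\Omega|\|\Div\vec\eta\|_{C^0}$ and by \eqref{eq:St_estimate}, respectively); dominated convergence then gives $I\!I\!I_\varepsilon\to-\tfrac16\int_0^T\tilde S[\varphi](t)\int_\Omega\vec\nu\cdot\vec\eta\,\mathrm{d}\mathcal{H}^{n-1}|_{\partial^\ast\Omega_{\mathrm{in}}(\varphi,t)}\,\mathrm{d}t$.

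Collecting the four limits in the relation $-\int\vec v\cdot\vec\eta\,\mathrm{d}\mu_t=I+I\!I-I\!I\!I$ (valid in the limit) yields
\[
\int_0^T\!\!\int_\Omega\vec v\cdot\vec\eta\,\mathrm{d}\mu_t\,\mathrm{d}t=\int_0^T\!\!\int_\Omega\vec h\cdot\vec\eta\,\mathrm{d}\mu_t\,\mathrm{d}t+\frac16\int_0^T\!\!\int_\Omega\bigl(\gamma(u)-\tilde S[\varphi]\bigr)\,\vec\nu\cdot\vec\eta\,\mathrm{d}\mathcal{H}^{n-1}|_{\partial^\ast\Omega_{\mathrm{in}}(\varphi,t)}\,\mathrm{d}t,
\]
which identifies the vector $\vec f=\vec v-\vec h$ of Theorem~\ref{thm:GM}(III). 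Finally I would multiply through by $6\sqrt2$, use $\nu_t=6\sqrt2\,\mu_t$ from \eqref{eq:mu_nu}, and insert the Radon--Nikod\'ym derivative via $\mathrm{d}\mathcal{H}^{n-1}|_{\partial^\ast\Omega_{\mathrm{in}}(\varphi,t)}=\bigl(\frac{\mathrm{d}\mathcal{H}^{n-1}|_{\partial^\ast\Omega_{\mathrm{in}}(\varphi,t)}}{\mathrm{d}\nu_t}\bigr)\mathrm{d}\nu_t$ (legitimate since $\mathcal{H}^{n-1}|_{\partial^\ast\Omega_{\mathrm{in}}(\varphi,t)}\ll\nu_t$), which turns the coefficient $\tfrac16$ into $\sqrt2$ and produces the asserted formula for $\vec v$. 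The remaining claim, that $u$ is a weak solution of \eqref{eq:u_limit}, is precisely the Proposition proved above.
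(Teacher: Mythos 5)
Your overall strategy is the same as the paper's---same decomposition into the mean-curvature part $I_\varepsilon$ and the two forcing parts, same use of $G'(\varphi^\varepsilon)\nabla\varphi^\varepsilon=\nabla(G(\varphi^\varepsilon))$, the same soft convergences for the forcing terms, and the same final rescaling via $\nu_t=6\sqrt{2}\,\mu_t$ and the Radon--Nikod\'ym derivative. The term $I\!I\!I_\varepsilon$ is handled correctly: the integration by parts $\int_{\Omega_{\mathrm{in}}(\varphi,t)}\Div\vec\eta\,\mathrm{d}x=\int_{\partial^\ast\Omega_{\mathrm{in}}(\varphi,t)}\vec\eta\cdot\vec\nu\,\mathrm{d}\mathcal{H}^{n-1}$ there is legitimate because $\vec\eta\in C^1$.

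The gap is in $I\!I_\varepsilon$, at the step ``integrating by parts in $\Omega$ gives $I\!I_\varepsilon\to-\tfrac16\int_0^T\int_\Omega\gamma(u)\vec\nu\cdot\vec\eta\,\mathrm{d}\mathcal{H}^{n-1}|_{\partial^\ast\Omega_{\mathrm{in}}(\varphi,t)}\,\mathrm{d}t$.'' Here you are invoking the Gauss--Green formula
\[
\int_{\Omega_{\mathrm{in}}(\varphi,t)}\Div\bigl(\gamma(u)\vec\eta\bigr)\,\mathrm{d}x
=\int_{\partial^\ast\Omega_{\mathrm{in}}(\varphi,t)}\gamma(u)\vec\eta\cdot\vec\nu\,\mathrm{d}\mathcal{H}^{n-1}
\]
for a set of finite perimeter, but the vector field $\gamma(u)\vec\eta$ is not $C^1$: for a.e.\ $t$ one only has $u(\cdot,t)\in H^1(\Omega)$, hence $\gamma(u)\vec\eta\in W^{1,2}(\Omega;\mathbb{R}^n)$ (using that $\gamma'$ is bounded). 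The BV divergence theorem as you state it---via De~Giorgi's structure theorem and ``integrating by parts''---is justified for $C^1$ (or at least Lipschitz) vector fields; for a merely Sobolev field, the boundary integral is only meaningful once one has a well-defined trace of $W^{1,1}(\Omega)$-functions on the rectifiable set $\partial^\ast\Omega_{\mathrm{in}}(\varphi,t)$, and such a trace requires quantitative control of $\mathcal{H}^{n-1}|_{\partial^\ast\Omega_{\mathrm{in}}(\varphi,t)}$. The paper does exactly this: it approximates $u(\cdot,t)$ by $u^j\in C_0^\infty(\Omega)$, shows the boundary integrals form a Cauchy sequence via the trace inequality of \cite[Lemma~4.9.1]{ziemer1989weakly}, which in turn needs the density ratio $D(t)=\sup_{x,r}\mathcal{H}^{n-1}(\partial^\ast\Omega_{\mathrm{in}}(\varphi,t)\cap B_r(x))/r^{n-1}$ to be finite, and then proves $D(t)<\infty$ for a.e.\ $t$ using the vanishing of the discrepancy, the estimate $\sup_i\int_\Omega\varepsilon_i(\triangle\varphi^{\varepsilon_i}-W'(\varphi^{\varepsilon_i})/(2\varepsilon_i))^2\,\mathrm{d}x<\infty$ (after Fatou and a further subsequence), and \cite[Proposition~4.5]{roger2006modified}. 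This is the main technical work in the proof and cannot be replaced by a routine appeal to De~Giorgi's theorem; in your write-up you should at minimum flag the regularity mismatch and explain where the density bound $D(t)<\infty$ comes from. Everything else in your argument is consistent with the paper.
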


\begin{remark}
    If we rewrite the above theorem in terms of $\mu_t$ using \eqref{eq:mu_nu}, we obtain Theorem \ref{thm:main}.
\end{remark}

\begin{proof}
By the definition of $\vec{v}^\varepsilon$ and the first equation in \eqref{eq:target_problem}, it holds that
\begin{align}
    &\int_0^T\int_\Omega\vec{v}^{\varepsilon_i}\cdot\vec{\eta}\,\mathrm{d}\tilde{\mu}_t^{\varepsilon_i}\,\mathrm{d}t\\
    &=\int_0^T\int_{\{x\in\Omega\mid\nabla\varphi^{\varepsilon_i}(x,t)\neq0\}}\frac{-\varphi_t^{\varepsilon_i}}{|\nabla\varphi^{\varepsilon_i}|}\frac{\nabla\varphi^{\varepsilon_i}}{|\nabla\varphi^{\varepsilon_i}|}\cdot\vec{\eta}{\varepsilon_i}|\nabla\varphi^{\varepsilon_i}|^2\,\mathrm{d}x\,\mathrm{d}t\\
    &=-\int_0^T\int_\Omega{\varepsilon_i}\varphi_t^{\varepsilon_i}\nabla\varphi^{\varepsilon_i}\cdot\vec{\eta}\,\mathrm{d}x\,\mathrm{d}t\\
    &=-\int_0^T\int_\Omega\left(\varepsilon\triangle\varphi^\varepsilon-\frac{W'(\varphi^\varepsilon)}{2\varepsilon}-\sqrt{2W(\varphi^\varepsilon)}(-\gamma(u^\varepsilon)+\tilde{S}[\varphi^\varepsilon])\right)\nabla\varphi^\varepsilon\cdot\vec{\eta}\,\mathrm{d}x\,\mathrm{d}t\\
    &=\int_0^T\int_\Omega\left(-\varepsilon\triangle\varphi^\varepsilon+\frac{W'(\varphi^\varepsilon)}{2\varepsilon}\right)\nabla\varphi^\varepsilon\cdot\vec{\eta}\,\mathrm{d}x\,\mathrm{d}t\\
    &\quad+\int_0^T\int_\Omega\sqrt{2W(\varphi^\varepsilon)}\left(-\gamma(u^\varepsilon)+\tilde{S}[\varphi^\varepsilon]\right)\nabla\varphi^\varepsilon\cdot\vec{\eta}\,\mathrm{d}x\,\mathrm{d}t\\
    &\equiv I_1+I_2
\end{align}
for any $\vec{\eta}\in C_0^1(\Omega\times(0,T);\mathbb{R}^n)$.
Let
\begin{equation}
    V_t^{\varepsilon_i}(\vec{\phi})
    \coloneqq\int_{\{x\in\Omega\mid\nabla\varphi^{\varepsilon_i}(x,t)\neq0\}}\vec{\phi}\left(x,I-\frac{\nabla\varphi^{\varepsilon_i}}{|\nabla\varphi^{\varepsilon_i}|}\otimes\frac{\nabla\varphi^{\varepsilon_i}}{|\nabla\varphi^{\varepsilon_i}|}\right)\,\mathrm{d}\mu_t^{\varepsilon_i}
\end{equation}
for $\vec{\eta}\in C_0(\Omega\times G(n-1,n))$, where $G(n-1,n)$ denotes  the Grassmannian; that is, the subspace of all hyperplanes in $\mathbb{R}^n$.
Note that $V_t^{\varepsilon_i}$ is a natural varifold for our problem~\eqref{eq:target_problem}~\cite{takasao2016existence}.
In \cite[Lemma 6.6]{takasao2016existence}, it has been shown that
\begin{equation}
    I_1
    =-\int_0^T\delta V_t^{\varepsilon_i}(\vec{\eta})\,\mathrm{d}t+\text{(error term)}
    \longrightarrow
    \int_0^T\int_\Omega\vec{h}\cdot\vec{\eta}\,\mathrm{d}\mu_t\,\mathrm{d}t,
\end{equation}
which follows from Theorem~\ref{thm:GM} (I), where $\delta V_t^\varepsilon(\vec{\eta})$ denotes the first variation of $V_t^\varepsilon$.
Concerning $I_2$, note that 
\begin{equation}
    \sqrt{2W(\varphi^\varepsilon)}\nabla\varphi^\varepsilon
    =G'(\varphi^\varepsilon)\nabla\varphi^\varepsilon
    =\nabla(G(\varphi^\varepsilon))
\end{equation}
holds.
Then, we have
\begin{align}
    I_2
    &=-\int_0^T\int_\Omega\left(
        \gamma(u^{\varepsilon_i})-\tilde{S}[\varphi^{\varepsilon_i}]
    \right)\nabla\left(G(\varphi^{\varepsilon_i})\right)\cdot\vec{\eta}\,\mathrm{d}x\,\mathrm{d}t\\
    &=-\int_0^T\int_\Omega\gamma(u^{\varepsilon_i})\nabla \left(G(\varphi^{\varepsilon_i})\right)\cdot\vec{\eta}\,\mathrm{d}x\,\mathrm{d}t
    +\int_0^T\int_\Omega\tilde{S}[\varphi^{\varepsilon_i}]\nabla \left(G(\varphi^{\varepsilon_i})\right)\cdot\vec{\eta}\,\mathrm{d}x\,\mathrm{d}t\\
    &\equiv J_1+J_2.
\end{align}
By integration by parts, $J_2$ can be calculated as
\begin{equation}
    J_2
    =\int_0^T\tilde{S}[\varphi^{\varepsilon_i}]\int_\Omega\nabla G(\varphi^{\varepsilon_i})\cdot\vec{\eta}\,\mathrm{d}x\,\mathrm{d}t
    =-\int_0^T\tilde{S}[\varphi^{\varepsilon_i}]\int_\Omega G(\varphi^{\varepsilon_i})\Div\vec{\eta}\,\mathrm{d}x\,\mathrm{d}t.
\end{equation}
Since $6G(\varphi^\varepsilon(\cdot,t))\to\varphi(\cdot,t)$ in $L^1(\Omega)$ for all $t\ge0$ by Lemma \ref{lem:phi_converge}, it holds that
\begin{align}
    \tilde{S}[\varphi^\varepsilon](t)
    \to\tilde{S}[\varphi](t)
    =\alpha\left(\int_\Omega\frac{1}{6}\varphi(x,t)\,\mathrm{d}x-\int_\Omega\frac{1}{6}\varphi(x,0)\,\mathrm{d}x\right)
\end{align}
for all $t\ge0$ and $\sup_{i\in\mathbb{N}}|\tilde{S}[\varphi^{\varepsilon_i}]|<\infty$. Therefore, we obtain
\begin{align}
    \tilde{S}[\varphi^{\varepsilon_i}]\longrightarrow\tilde{S}[\varphi]\qquad
    \text{in }\ L^2(0,T).
\end{align}
Similarly, we have
\begin{equation}
    \int_\Omega G(\varphi^{\varepsilon_i})\Div\vec{\eta}\,\mathrm{d}x
    \longrightarrow
    \int_\Omega G(\varphi)\Div\vec{\eta}\,\mathrm{d}x
\end{equation}
for all $t\ge0$
and $\sup_{i\in\mathbb{N}}\|G(\varphi^{\varepsilon_i})\|_{L^\infty(\Omega\times(0,T))}<\infty$.
Therefore, we obtain
\begin{equation}
    \int_\Omega G(\varphi^{\varepsilon_i})\Div\vec{\eta}\,\mathrm{d}x
    \longrightarrow\int_\Omega G(\varphi)\Div\vec{\eta}\,\mathrm{d}x\qquad
    \text{in}\ L^2(0,T).
\end{equation}
Hence, we obtain
\begin{align}
    J_2
    &\longrightarrow-\int_0^T\tilde{S}[\varphi]\int_\Omega G(\varphi)\Div\vec{\eta}\,\mathrm{d}x\,\mathrm{d}t
    =-\frac{1}{6}\int_0^T\tilde{S}[\varphi]\int_{\Omega_{\mathrm{in}}(\varphi,t)}\Div\vec{\eta}\,\mathrm{d}x\,\mathrm{d}t\\
    &=-\frac{1}{6}\int_0^T\tilde{S}[\varphi]\int_{\partial^\ast\Omega_{\mathrm{in}}(\varphi,t)}\vec{\eta}\cdot\vec{\nu}\,\mathrm{d}\mathcal{H}^{n-1}\,\mathrm{d}t.
\end{align}
Therefore, using the Radon--Nikod\'ym derivative, we obtain that
\begin{equation}
    -\frac{1}{6}\int_0^T\tilde{S}[\varphi]\int_{\partial^\ast\Omega_{\mathrm{in}}(\varphi,t)}\vec{\eta}\cdot\vec{\nu}\,\mathrm{d}\mathcal{H}^{n-1}\,\mathrm{d}t
    =-\frac{1}{6}\int_0^T\tilde{S}[\varphi]\int_\Omega\vec{\eta}\cdot\vec{\nu}\left(
        \frac{\mathrm{d}\mathcal{H}^{n-1}|_{\partial^\ast\Omega_{\mathrm{in}}(\varphi,t)}}{\mathrm{d}\mu_t}
    \right)\,\mathrm{d}\mu_t\,\mathrm{d}t.
\end{equation}
Concerning $J_1$, performing integration by parts, we obtain
\begin{align}
    J_1
    &=-\int_0^T\int_\Omega\gamma(u^{\varepsilon_i})\nabla G(\varphi^{\varepsilon_i})\cdot\vec{\eta}\,\mathrm{d}x\,\mathrm{d}t
    =\int_0^T\int_\Omega G(\varphi^{\varepsilon_i})\Div(\gamma(u^{\varepsilon_i})\cdot\vec{\eta})\,\mathrm{d}x\,\mathrm{d}t\\
    &=\int_0^T\int_\Omega G(\varphi^{\varepsilon_i})\left[
        \gamma'(u^{\varepsilon_i})\nabla u^{\varepsilon_i}\cdot\vec{\eta}
        +\gamma(u^{\varepsilon_i})\Div\vec{\eta}
    \right]\,\mathrm{d}x\,\mathrm{d}t\\
    &\equiv
    K_1+K_2.
\end{align}
For $K_1$, note that by taking a subsequence of $\{\varepsilon_i\}_{i=1}^\infty$ if necessary, it holds that $\{u^{\varepsilon_i}\}$ weakly converges to $u$ in $H^1(\Omega\times(0,T))$ and strongly in $L^2(\Omega\times(0,T))$ by Lemma~\ref{lem:u_converge}.
Let us consider to estimate
\begin{align}
    &\left|\int_0^T\int_\Omega G(\varphi^{\varepsilon_i})\gamma
    '(u^{\varepsilon_i})\nabla u^{\varepsilon_i}\cdot\vec{\eta}\,\mathrm{d}x\,\mathrm{d}t-\int_0^T\int_\Omega
     G(\varphi)\gamma'(u)\nabla u\cdot\vec{\eta}\,\mathrm{d}x\,\mathrm{d}t\right|\\
     &\le\left|\int_0^T\int_\Omega(G(\varphi^{\varepsilon_i})-G(\varphi))\gamma'(u^{\varepsilon_i})\nabla u^{\varepsilon_i}\cdot\vec{\eta}\,\mathrm{d}x\,\mathrm{d}t\right|\\
     &\qquad+\left|\int_0^T\int_\Omega G(\varphi)(\gamma'(u^{\varepsilon_i})-\gamma'(u))\nabla u^{\varepsilon_i}\cdot\vec{\eta}\,\mathrm{d}x\,\mathrm{d}t\right|\\
     &\qquad+\left|\int_0^T\int_\Omega G(\varphi)\gamma'(u)\nabla(u^{\varepsilon_i}-u)\cdot\vec{\eta}\,\mathrm{d}x\,\mathrm{d}t\right|\\
     &\equiv L_1+L_2+L_3.
\end{align}
Concerning $L_1$, since $\gamma'(u^{\varepsilon_i})$ is bounded, by estimating $\vec{\eta}$ by its $L^\infty$-norm and applying the Cauchy--Schwarz inequality, we have
\begin{align}
    L_1
    &\le C\left(\int_0^T\int_\Omega(G(\varphi^{\varepsilon_i})-G(\varphi))^2\,\mathrm{d}x\,\mathrm{d}t\right)^{1/2}\left(\int_0^T\int_\Omega|\nabla u^{\varepsilon_i}|^2\,\mathrm{d}x\,\mathrm{d}t\right)^{1/2}.
\end{align}
Note that $\{\|\nabla u^{\varepsilon_i}\|_{L^2(\Omega\times(0,T))}\}_{i=1}^\infty$ is bounded, $|G(\varphi^{\varepsilon_i})-G(\varphi)|\le C$ since $\{\varphi^{\varepsilon_i}\}_{i=1}^\infty$ and $\varphi$ are bounded.
Moreover, $\varphi^{\varepsilon_i}\to\varphi$ for a.e.~$(x,t)\in\Omega\times(0,T)$ by Lemma \ref{lem:phi_converge}.
Thus, the Lebesgue's dominated convergence theorem implies that $L_1\to0$ as $i\to\infty$.
As for $L_2$, since $G(\varphi)\in L^\infty(\Omega\times(0,T))$, the Cauchy--Schwarz inequality yields that
\begin{align}
    L_2
    \le C\left(\int_0^T\int_\Omega(\gamma'(u^{\varepsilon_i})-\gamma'(u))^2\,\mathrm{d}x\,\mathrm{d}t\right)^{1/2}\left(\int_0^T\int_\Omega|\nabla u^{\varepsilon_i}|^2\,\mathrm{d}x\,\mathrm{d}t\right)^{1/2}.
\end{align}
Since $\{\|\nabla u^{\varepsilon_i}\|_{L^2(\Omega\times(0,T))}\}_{i=1}^\infty$ is bounded and $\{u^{\varepsilon_i}\}_{i=1}^\infty$ strongly converges to $u$ in $L^2(\Omega\times(0,T))$, we have $L_2\to0$ as $i\to\infty$.
Finally, since $G(\varphi)\gamma'(u)\vec{\eta}\in L^2(\Omega\times(0,T))$ and $\{u^{\varepsilon_i}\}_{i=1}^\infty$ weakly converges to $u$ in $H^1(\Omega\times(0,T))$, we see that $L_3\to0$.
Therefore, we have
\begin{align}
    K_1
    &=\int_0^T\int_\Omega G(\varphi^{\varepsilon_i})\gamma'(u^{\varepsilon_i})\nabla u^{\varepsilon_i}\cdot\vec{\eta}\,\mathrm{d}x\,\mathrm{d}t\\
    &\longrightarrow
    \int_0^T\int_{\Omega} G(\varphi)\gamma'(u)\nabla u\cdot\vec{\eta}\,\mathrm{d}x\,\mathrm{d}t
    =\frac{1}{6}\int_0^T\int_{\Omega_{\mathrm{in}}(\varphi,t)}\gamma'(u)\nabla u\cdot\vec{\eta}\,\mathrm{d}x\,\mathrm{d}t
\end{align}
holds, where we have used $G(u^{\varepsilon_i})\rightarrow G(u)$ and $\gamma'(u^{\varepsilon_i})\rightarrow\gamma'(u)$ as $i\to\infty$ in $L^2(\Omega\times(0,T))$.
For $K_2$, we obtain in a similar manner that
\begin{align}
    K_2
    &=\int_0^T\int_\Omega G(\varphi^{\varepsilon_i})\gamma(u^{\varepsilon_i})\Div\vec{\eta}\,\mathrm{d}x\,\mathrm{d}t\\
    &\longrightarrow
    \int_0^T\int_\Omega G(\varphi)\gamma(u)\Div\vec{\eta}\,\mathrm{d}x\,\mathrm{d}t
    =\frac{1}{6}\int_0^T\int_{\Omega_{\mathrm{in}}(\varphi,t)}\gamma(u)\Div\vec{\eta}\,\mathrm{d}x\,\mathrm{d}t
\end{align}
holds.
As a result, we obtain
\begin{align}
    K_1+K_2
    &\longrightarrow\frac{1}{6}\int_0^T\int_{\Omega_{\mathrm{in}}(\varphi,t)}\left[
        \gamma'(u)\nabla u\cdot\vec{\eta}+\gamma(u)\Div\vec{\eta}
    \right]\,\mathrm{d}x\,\mathrm{d}t\\
    &=\frac{1}{6}\int_0^T\int_{\Omega_{\mathrm{in}}(\varphi,t)}\Div(\gamma(u)\vec{\eta})\,\mathrm{d}x\,\mathrm{d}t\\
    &=\frac{1}{6}\int_0^T\int_{\partial^\ast\Omega_{\mathrm{in}}(\varphi,t)}\gamma(u)\vec{\eta}\cdot\vec{\nu}\,\mathrm{d}\mathcal{H}^{n-1}\,\mathrm{d}t\\
    &=\frac{1}{6}\int_0^T\int_\Omega\gamma(u)\vec{\phi}\cdot\vec{\nu}\left(
        \frac{\mathrm{d}\mathcal{H}^{n-1}|_{\partial^\ast\Omega_{\mathrm{in}}(\varphi,t)}}{\mathrm{d}\mu_t}
    \right)\,\mathrm{d}\mu_t\,\mathrm{d}t.
\end{align}
Here, let us show that the following equality holds:
\begin{align}
    \int_0^T\int_{\Omega_{\mathrm{in}}(\varphi,t)}\Div(\gamma(u)\vec{\eta})\,\mathrm{d}x\,\mathrm{d}t
    =\int_0^T\int_{\partial^\ast\Omega_{\mathrm{in}}(\varphi,t)}\gamma(u)\vec{\eta}\cdot\vec{\nu}\,\mathrm{d}\mathcal{H}^{n-1}\,\mathrm{d}t.
    \label{eq:divergence_thm_nonsmooth}
\end{align}
Since $u\in H^1(\Omega\times(0,T))$, we see that
\begin{align}
    \int_0^T\int_\Omega|u(x,t)|^2\,\mathrm{d}x\,\mathrm{d}t,\ 
    \int_0^T\int_\Omega|\nabla u(x,t)|^2\,\mathrm{d}x\,\mathrm{d}t<\infty.
\end{align}
Therefore, 
\begin{align}
    \int_\Omega|u(x,t)|^2\,\mathrm{d}x,\ 
    \int_\Omega|\nabla u(x,t)|^2\,\mathrm{d}x<\infty
    \qquad\text{for a.e. } t\ge0;
\end{align}
that is, $u(\cdot,t)\in H^1(\Omega)$ for a.e. $t\ge0$.
Choose $t\ge0$ such that $u(\cdot,t)\in H^1(\Omega)$ and take an approximate sequence $\{u^j\}_{j=1}^\infty\subset C_0^\infty(\Omega)$; that is, $u^j\to u(\cdot,t)$ in $H^1(\Omega)$.
Since each $u^j$ is smooth, it holds that
\begin{align}
    \int_{\Omega_{\mathrm{in}(\varphi,t)}}\Div(\gamma(u^j)\vec{\eta})\,\mathrm{d}x=\int_{\partial^\ast\Omega_{\mathrm{in}}(\varphi,t)}\gamma(u^j)\vec{\eta}\cdot\vec{\nu}\,\mathrm{d}\mathcal{H}^{n-1}.
    \label{eq:divergence_thm_smooth}
\end{align}
It is clear that
\begin{align}
    \int_{\Omega_{\mathrm{in}}(\varphi,t)}\Div(\gamma(u^j)\vec{\eta})\,\mathrm{d}x
    \to\int_{\Omega_{\mathrm{in}}(\varphi,t)}\Div(\gamma(u)\vec{\eta})\,\mathrm{d}x\qquad\text{as } j\to\infty.
\end{align}
Therefore, we consider the right-hand side of \eqref{eq:divergence_thm_smooth} and show that it is a Cauchy sequence in $\mathbb{R}$.
Choose a function $\zeta\in C_0^\infty(\mathbb{R}^n)$ such that $\zeta=1$ on $\Omega=[0,1)^n$.
Suppose for a moment that $D(t)$, defined by
\begin{align}
    D(t)\coloneqq\sup_{\substack{x\in\Omega\\r\in(0,1)}}\frac{\mathcal{H}^{n-1}(\partial^\ast\Omega_{\mathrm{in}}(\varphi,t)\cap B_r(x))}{r^{n-1}},
\end{align}
is finite for a.e. $t>0$.
Then, it follows from \cite[Lemma 4.9.1]{ziemer1989weakly} that
\begin{align}
    &\left|\int_{\partial^\ast\Omega_{\mathrm{in}}(\varphi,t)}\gamma(u^k)\vec{\eta}\cdot\vec{\nu}\,\mathrm{d}\mathcal{H}^{n-1}-\int_{\partial^\ast\Omega_{\mathrm{in}}(\varphi,t)}\gamma(u^j)\vec{\eta}\cdot\vec{\nu}\,\mathrm{d}\mathcal{H}^{n-1}\right|\\
    &\le C\int_{\partial^\ast\Omega_{\mathrm{in}}(\varphi,t)}|\gamma(u^j)-\gamma(u^k)|\,\mathrm{d}\mathcal{H}^{n-1}
    \le C\int_{\partial^\ast\Omega_{\mathrm{in}}(\varphi,t)}|\zeta(\gamma(u^k)-\gamma(u^j))|\,\mathrm{d}\mathcal{H}^{n-1}\\
    &\le CD(t)\int_{\mathbb{R}^n}|\nabla(\zeta(\gamma(u^k)-\gamma(u^j)))|\,\mathrm{d}x.
\end{align}
Since $\{u^j\}_{j=1}^\infty$ converges to $u(\cdot,t)$ in $H^1(\Omega)$, it is a Cauchy sequence in $H^1(\Omega)$, and
\begin{align}
    &\int_{\mathbb{R}^n}|\nabla(\zeta(\gamma(u^k)-\gamma(u^j)))|\,\mathrm{d}x\\
    &\le\int_{\mathbb{R}^n}|\zeta||\nabla(\gamma(u^k)-\gamma(u^j))|\,\mathrm{d}x+\int_{\mathbb{R}^n}|\nabla\zeta|\cdot|\gamma(u^k)-\gamma(u^j)|\,\mathrm{d}x
    \to0
\end{align}
as $j,k\to\infty$.
Therefore, the right-hand side of \eqref{eq:divergence_thm_smooth} is a Cauchy sequence in $\mathbb{R}$.
Note that
the trace operator 
$T: W^{1,1} (\Omega) \to L^1 (\partial ^\ast \Omega_{\mathrm{in}}(\varphi,t)) $ is well-defined since one can show $\| g^1 - g^2 \|_{L^1 (\partial ^\ast \Omega_{\mathrm{in}}(\varphi,t))} \leq C D(t)
\|g^1 -g^2 \|_{W^{1,1} (\Omega)}$ for any $g^1,g^2 \in W^{1,1} (\Omega)$ similarly.
Hence we see that
\begin{align}
    \int_{\partial^\ast\Omega_{\mathrm{in}}(\varphi,t)}\gamma(u^j)\vec{\eta}\cdot\vec{\nu}\,\mathrm{d}\mathcal{H}^{n-1}\to\int_{\partial^\ast\Omega_{\mathrm{in}}(\varphi,t)}\gamma(u)\vec{\eta}\cdot\vec{\nu}\,\mathrm{d}\mathcal{H}^{n-1}.
\end{align}
This completes the proof of \eqref{eq:divergence_thm_nonsmooth}.

We show that $D(t)$ is finite for a.e. $t>0$.
To this end, we first prove that there exists a subsequence of $\{\varepsilon_i\}_{i=1}^\infty$, denoted by the same symbol, such that
\begin{align}
    \sup_{i\in\mathbb{N}}\left\{
        \int_\Omega\left(\frac{\varepsilon_i|\nabla\varphi^{\varepsilon_i}|^2}{2}+\frac{W(\varphi^{\varepsilon_i})}{2\varepsilon_i}\right)^2\,\mathrm{d}x+\int_\Omega\varepsilon_i\left(\triangle\varphi^{\varepsilon_i}-\frac{W'(\varphi^{\varepsilon_i})}{2\varepsilon_i}\right)^2\,\mathrm{d}x
    \right\}<\infty
    \label{eq:sup_finite}
\end{align}
for a.e. $t>0$.
Since Lemma \ref{lem:estimate}(I) readily shows that
\begin{align}
    \sup_{i\in\mathbb{N}}\int_\Omega\left(\frac{\varepsilon_i|\nabla\varphi^{\varepsilon_i}|^2}{2}+\frac{W(\varphi^{\varepsilon_i})}{2\varepsilon_i}\right)^2\,\mathrm{d}x<\infty,
\end{align}
we focus on the second term.
It follows from Lemma \ref{lem:estimate}(II) that
\begin{align}
    \sup_{i\in\mathbb{N}}\int_0^T\int_\Omega\left(\varepsilon_i\triangle\varphi^{\varepsilon_i}-\frac{W'(\varphi^{\varepsilon_i})}{2\varepsilon_i}\right)^2\,\mathrm{d}x\,\mathrm{d}t<\infty
\end{align}
holds.
Applying the Fatou's lemma, we have
\begin{align}
    &\sup_{i\in\mathbb{N}}\int_0^T\int_\Omega\varepsilon_i\left(\triangle\varphi^{\varepsilon_i}-\frac{W'(\varphi^{\varepsilon_i})}{2\varepsilon_i}\right)^2\,\mathrm{d}x\,\mathrm{d}t\\
    &\ge\liminf_{i\to\infty}\int_0^T\int_\Omega\varepsilon_i\left(\triangle\varphi^{\varepsilon_i}-\frac{W'(\varphi^{\varepsilon_i})}{2\varepsilon_i}\right)^2\,\mathrm{d}x\,\mathrm{d}t\\
    &\ge\int_0^T\left(\liminf_{i\to\infty}\int_\Omega\varepsilon_i\left(\triangle\varphi^{\varepsilon_i}-\frac{W'(\varphi^{\varepsilon_i})}{2\varepsilon_i}\right)^2\,\mathrm{d}x\right)\,\mathrm{d}t.
\end{align}
Therefore,
\begin{align}
    \liminf_{i\to\infty}\int_\Omega\varepsilon_i\left(\triangle\varphi^{\varepsilon_i}-\frac{W'(\varphi^{\varepsilon_i})}{2\varepsilon_i}\right)^2\,\mathrm{d}x
\end{align}
is finite for a.e. $t>0$.
Thus, taking a subsequence of $\{\varepsilon_i\}_{i=1}^\infty$ if necessary, \eqref{eq:sup_finite} holds.
We next prove that
\begin{align}
    D(t)\le\frac{1}{3}\sup_{\substack{x\in\Omega\\r\in(0,1)}}\frac{\mu_t(B_{2r}(x))}{r^{n-1}}
\end{align}
holds.
Note that by applying the Young's inequality, we have
\begin{align}
    |\nabla G(\varphi^\varepsilon)|
    &=|G'(\varphi^\varepsilon)\nabla\varphi^\varepsilon|
    =\sqrt{2W(\varphi^\varepsilon)}|\nabla\varphi^\varepsilon|
    \le\frac{\varepsilon|\nabla\varphi^\varepsilon|^2}{2}+\frac{2W(\varphi^\varepsilon)}{2\varepsilon}\\
    &\le2\left(\frac{\varepsilon|\nabla\varphi^\varepsilon|^2}{2}+\frac{W(\varphi^\varepsilon)}{2\varepsilon}\right).
\end{align}
Using lower semi-continuity, we obtain
\begin{align}
    \mathcal{H}^{n-1}(\partial^\ast\Omega_{\mathrm{in}}(\varphi,t)\cap B_r(x))
    &=\int_\Omega\chi_{B_r(x)}(y)\,\mathrm{d}\|\nabla\chi_{\Omega_{\mathrm{in}}(\varphi,t)}\|(y)\\
    &\le\liminf_{i\to\infty}\int_\Omega\chi_{B_r(x)}(y)\frac{1}{6}|\nabla G(\varphi^{\varepsilon_i}(y,t))|\,\mathrm{d}y\\
    &\le\frac{1}{3}\liminf_{i\to\infty}\int_\Omega\chi_{B_r(x)}(y)\,\mathrm{d}\mu_t^{\varepsilon_i}\\
    &\le\frac{1}{3}\mu_t(B_{2r}(x)).
\end{align}
Since \eqref{eq:sup_finite} holds, using \cite[Proposition 4.5]{roger2006modified}, we see that $D(t)$ is finite for a.e. $t>0$.

Summarizing the above calculations, we conclude that
\begin{equation}
    \vec{v}
    =\vec{h}-\frac{1}{6}\tilde{S}[\varphi]\left(\frac{\mathrm{d}\mathcal{H}^{n-1}|_{\partial^\ast\Omega_{\mathrm{in}}(\varphi,t)}}{\mathrm{d}\mu_t}\right)\vec{\nu}+\frac{1}{6}\gamma(u)\left(
        \frac{\mathrm{d}\mathcal{H}^{n-1}|_{\partial^\ast\Omega_{\mathrm{in}}(\varphi,t)}}{\mathrm{d}\mu_t}
    \right)\vec{\nu}
\end{equation}
in the sense of distribution.
Using the relation~\eqref{eq:RD-der_rel}, we obtain
\begin{equation}
    \vec{v}
    =\vec{h}-\sqrt{2}\tilde{S}[\varphi]\left(\frac{\mathrm{d}\mathcal{H}^{n-1}|_{\partial^\ast\Omega_{\mathrm{in}}(\varphi,t)}}{\mathrm{d}\nu_t}\right)\vec{\nu}+\sqrt{2}\gamma(u)\left(
        \frac{\mathrm{d}\mathcal{H}^{n-1}|_{\partial^\ast\Omega_{\mathrm{in}}(\varphi,t)}}{\mathrm{d}\nu_t}
    \right)\vec{\nu}.
\end{equation}
\end{proof}

\section{Concluding remarks}
In this study, we can derive the following singular limit equations for \eqref{eq:target_problem},  although they are weak solutions:
\begin{equation}
\begin{dcases*}
\vec{v} =\vec{h}-\sqrt{2} \tilde{S}[\varphi]\left(\frac{\mathrm{d}\mathcal{H}^{n-1}|_{\partial^\ast\Omega_{\mathrm{in}}(\varphi,t)}}{\mathrm{d}\nu_t}\right)\vec{\nu} + \sqrt{2} \gamma(u)\left(
        \frac{\mathrm{d}\mathcal{H}^{n-1}|_{\partial^\ast\Omega_{\mathrm{in}}(\varphi,t)}}{\mathrm{d}\nu_t}
    \right)\vec{\nu}, \\
\frac{\partial u}{\partial t}=\triangle u-ku+\varphi. 
\end{dcases*}
\label{eq:u_limit_formal}
\end{equation}
By the way, the singular limit equations of \eqref{eq:target_problem} can be derived formally as follows:
\begin{equation}\label{eq1-12}
 \left\{ \begin{aligned}
   &\vec{v} = \vec{h} + \sqrt{2} 
   \left ( - \tilde{S}[\varphi] + \gamma(u)  \right )\vec{\nu}, \\[1mm]
   & \displaystyle \frac{\partial u}{\partial t} = \triangle u -  k u + \varphi.
\end{aligned} \right.  
\end{equation} 
See \cite{nagayama2023reaction} for formal derivation methods.
Here, we assume that the interface is a single layer in \eqref{eq:u_limit_formal},
\begin{equation}
\frac{\mathrm{d}\mathcal{H}^{n-1}|_{\partial^\ast\Omega_{\mathrm{in}}(\varphi,t)}}{\mathrm{d}\nu_t} = 1.
\end{equation}
Then, \eqref{eq:u_limit_formal} is found to be consistent with the singular limit equations \eqref{eq1-12}. 
Therefore, if we can prove that the weak-strong uniqueness principle \cite{laux2024weak} for \eqref{eq:u_limit_formal}, we can show the mathematical justification for the derivation of the singular limit equations \eqref{eq1-12}.
The above proof is future work.

\subsection*{Acknowledgements}

This work was prtially supported by JSPS KAKENHI Grant Numbers JP22K03425, JP22K18677, JP23H00085, JP23H00086, JP23H04936, JP23K03180, JP23K20808, JP24K00531, JP25K00918, and the Cooperative Research Program of ``Network Joint Research Center for Materials and Devices'' (No. 20251035). 
This work was supported by the Research Institute for Mathematical Sciences, an International Joint Usage/Research Center located
in Kyoto University.

\bibliographystyle{siam}
\bibliography{arXiv}

\end{document}